\newcommand{\bm}[1]{\boldsymbol{#1}}
\newcommand{\mbfa}{\mathbf{A}}
\newcommand{\mbfb}{\mathbf{B}}
\newcommand{\mbfc}{\mathbf{C}}
\newcommand{\mbfd}{\mathbf{D}}
\newcommand{\mbfx}{\mathbf{X}}
\newcommand{\mbr}{\mathbb{R}}
\newcommand{\mcc}{\mathcal{C}}
\newcommand{\mcm}{\mathcal{M}}
\newcommand{\mcp}{\mathcal{P}}
\newcommand{\mcx}{\mathcal{X}}
\newcommand{\mcy}{\mathcal{Y}}
\newcommand{\vx}{{\mathbf{x}}}
\newcommand{\vA}{{\mathbf{A}}}
\newcommand{\vB}{{\mathbf{B}}}
\newcommand{\vC}{{\mathbf{C}}}
\newcommand{\vD}{{\mathbf{D}}}
\newcommand{\vG}{{\mathbf{G}}}
\newcommand{\vH}{{\mathbf{H}}}
\newcommand{\vI}{{\mathbf{I}}}
\newcommand{\vM}{{\mathbf{M}}}
\newcommand{\vQ}{{\mathbf{Q}}}
\newcommand{\vR}{{\mathbf{R}}}
\newcommand{\vU}{{\mathbf{U}}}
\newcommand{\vV}{{\mathbf{V}}}
\newcommand{\vX}{{\mathbf{X}}}
\newcommand{\vY}{{\mathbf{Y}}}
\newcommand{\vZ}{{\mathbf{Z}}}
\newcommand{\cC}{{\mathcal{C}}}
\newcommand{\cG}{{\mathcal{G}}}
\newcommand{\cK}{{\mathcal{K}}}
\newcommand{\cL}{{\mathcal{L}}}
\newcommand{\cM}{{\mathcal{M}}}
\newcommand{\cO}{{\mathcal{O}}}
\newcommand{\cP}{{\mathcal{P}}}
\newcommand{\cX}{{\mathcal{X}}}
\newcommand{\cY}{{\mathcal{Y}}}
\newcommand{\cZ}{{\mathcal{Z}}}
\newcommand{\RR}{\mathbb{R}} 
\newcommand{\vzero}{\mathbf{0}} 
\newcommand{\diag}{{\mathrm{diag}}} 
\newcommand{\vvec}{{\mathrm{vec}}}
\newcommand{\fold}{{\mathbf{fold}}} 
\newcommand{\unfold}{{\mathbf{unfold}}} 
\newcommand{\fit}{{\mathrm{fit}}} 
\newcommand{\obj}{{\mathrm{obj}}} 
\newcommand{\round}{{\mathrm{round}}}
\newcommand{\err}{{\mathrm{err}}}
\newcommand{\st}{\mbox{ s.t. }}
\newcommand{\rankk}{{\hbox{rank}}}
\DeclareMathOperator*{\argmin}{arg\,min} 
\DeclareMathOperator*{\argmax}{arg\,max} 
\newcommand{\bc}{\begin{center}}
\newcommand{\ec}{\end{center}}
\newcommand{\bdm}{\begin{displaymath}}
\newcommand{\edm}{\end{displaymath}}
\newcommand{\beq}{\begin{equation}}
\newcommand{\eeq}{\end{equation}}
\newcommand{\bfl}{\begin{flushleft}}
\newcommand{\efl}{\end{flushleft}}
\newcommand{\bt}{\begin{tabbing}}
\newcommand{\et}{\end{tabbing}}
\newcommand{\beqn}{\begin{eqnarray}}
\newcommand{\eeqn}{\end{eqnarray}}
\newcommand{\beqs}{\begin{align*}} 
\newcommand{\eeqs}{\end{align*}}  
\begin{document}

\markboth{Y. Xu}{Fast algorithms for incomplete HOSVD}

\title{Fast algorithms for Higher-order Singular Value Decomposition \\ from incomplete data}

\author{Yangyang Xu
\thanks{Department of Mathematics, University of Alabama, Tuscaloosa, AL \\ Email: yangyang.xu@ua.edu}
}

\maketitle

\begin{abstract}
Higher-order singular value decomposition (HOSVD) is an efficient way for data reduction and also eliciting intrinsic structure of multi-dimensional array data. It has been used in many applications, and some of them involve incomplete data. To obtain HOSVD of the data with missing values, one can first impute the missing entries through a certain tensor completion method and then perform HOSVD to the reconstructed data. However, the two-step procedure can be inefficient and does not make reliable decomposition.

In this paper, we formulate an incomplete HOSVD problem and combine the two steps into solving a single optimization problem, which simultaneously achieves imputation of missing values and also tensor decomposition. We also present one algorithm for solving the problem based on block coordinate update (BCU). Global convergence of the algorithm is shown under mild assumptions and implies that of the popular higher-order orthogonality iteration (HOOI) method, and thus we, for the first time, give global convergence of HOOI.

In addition, we compare the proposed method to state-of-the-art ones for solving incomplete HOSVD and also low-rank tensor completion problems and demonstrate the superior performance of our method over other compared ones. Furthermore, we apply it to face recognition and MRI image reconstruction to show its practical performance.
\end{abstract}

\begin{classification}
65F99, 9008, 90C06, 90C26.
\end{classification}

\begin{keywords}
multilinear data analysis, higher-order singular value decomposition (HOSVD), low-rank tensor completion, non-convex optimization, higher-order orthogonality iteration (HOOI), global convergence.
\end{keywords}

\section{Introduction}
Multi-dimensional arrays (or called \emph{tensor}s) appear in many applications that collect data along multiple dimensions, including space, time, and spectrum, from different subjects (e.g., patients), and under different conditions (e.g., view points, illuminations, expressions). Higher-order singular value decomposition (HOSVD) \cite{de2000multilinear} is an efficient way for dimensionality reduction and eliciting the intrinsic structure of the multi-dimensional data. It generalizes the matrix SVD and decomposes a multi-dimensional array into the product of a core tensor and a few orthogonal matrices, each of which captures the subspace information corresponding to one dimension (also called \emph{mode} or \emph{way}). The decomposition can be used for classification tasks including face recognition \cite{vasilescu2002multilinear}, handwritten digit classification \cite{savas2007handwritten}, human motion analysis and recognition \cite{vasilescu2002human}, and so on. HOSVD can also be applied to predicting unknown values while the acquired data is incomplete such as seismic data reconstruction \cite{kreimer2012tensor} 
and personalized web search \cite{sun2005cubesvd}. On imputing missing values, data fitting is the main goal instead of decomposition itself. However, there are applications that involve missing values and also require the decomposition such as face recognition \cite{geng2011face}, facial age estimation \cite{geng2009facial}, and DNA microarray data analysis \cite{omberg2007tensor}.

In this paper, we aim at finding an approximate HOSVD of a given multi-dimensional array with missing values. More precisely, given partial entries of a tensor $\bm{\cM}\in\RR^{m_1\times\ldots\times m_N}$, we estimate its HOSVD as $\bm{\cC}\times_1\vA_1\ldots\times_N\vA_N$ such that the product is close to the underlying tensor $\bm{\cM}$ and $\vA_n$ can capture dominant subspace of the $n$-th mode of $\bm{\cM}$ for all $n$, where $\bm{\cC}\in\RR^{r_1\times\ldots\times r_N}$ is a core tensor, $\vA_n\in\RR^{m_n\times r_n}$ has orthonormal columns for all $n$, and $\times_n$ denotes the mode-$n$ tensor-matrix multiplication (see \eqref{eq:tm} below). To achieve the goal, we propose to solve the following \emph{incomplete HOSVD} problem:
\begin{equation}\label{eq:main1}
\begin{array}{ll}
\underset{\bm{\mcc},\mbfa}{\min}\,\frac{1}{2}\|\mcp_\Omega(\bm{\mcc}\times_1\mbfa_1\ldots\times_N\mbfa_N-\bm{\mcm})\|_F^2,\\[0.1cm]
\st \vA_n^\top\vA_n=\vI,\, \vA_n\in\RR^{m_n\times r_n},\, n=1,\ldots,N,
\end{array}
\end{equation}
where $\mbfa=(\mbfa_1,\ldots,\mbfa_N)$, $(r_1,\ldots,r_N)$ is a given multilinear rank, $\vI$ is the identity matrix of appropriate size, $\Omega$ indexes the observed entries, and $\cP_\Omega$ is a projection that keeps the entries in $\Omega$ and zeros out others. 
Since only partial entries of $\bm{\cM}$ are assumed known in \eqref{eq:main1}, we cannot have $r_n=m_n,\, n=1,\ldots,N$, because otherwise, it will cause overfitting problem. Hence, in general, instead of a full HOSVD, we can only get a truncated HOSVD  of $\bm{\cM}$ from its partial entries. 

To get an approximate HOSVD of $\bm{\cM}$ from its partial entries, one can also first fill in the unobserved entries through a certain tensor completion method and then perform some iterative method to have a (truncated) HOSVD of the estimated tensor. The advantage of our method is that it combines the two steps into solving just one problem and is usually more efficient and accurate. In addition, upon solving \eqref{eq:main1}, we can 
also estimate the unobserved entries of $\bm{\cM}$ from $\bm{\mcc}\times_1\mbfa_1\ldots\times_N\mbfa_N$ and thus achieve the tensor completion as a byproduct. 

We will write \eqref{eq:main1} into one equivalent problem and solve it by the block coordinate descent (BCD) method. Although the problem is non-convex, we will demonstrate that \eqref{eq:main1} solved by the simple BCD can perform better than state-of-the-art tensor completion methods on reconstructing (approximate) low-multilinear-rank tensors. In addition, it can produce more reliable factors and as a result give higher prediction accuracies on certain classification problems such as the face recognition problem.

\subsection{Related work} We first review methods for matrix and tensor factorization with missing values and then existing works on low-rank tensor completion (LRTC). 

\subsubsection*{Matrix and tensor factorization with missing values} The matrix SVD from incomplete data has been studied for decades; see \cite{ruhe1974numerical, kurucz2007methods} for example. It can be regarded as a special case of \eqref{eq:main1} by setting $N=2$ and restricting $\bm{\cC}$ to be a nonnegative diagonal matrix. Further removing the orthogonality constraint on $\vA_n$'s and setting $\bm{\cC}$ as the identity matrix, \eqref{eq:main1} reduces to the matrix factorization from incomplete data (e.g., see \cite{gabriel1979lower}). Existing methods for achieving matrix SVD or factorization with missing values are mainly BCD-type ones such as the expectation maximization (EM) in \cite{srebro2003weighted} that alternates between imputation of the missing values and SVD computation of the most recently estimated matrix, and the successive over-relaxation (SOR) in \cite{wen2012lmafit} that iteratively updates the missing values and the basis and coefficient factors by alternating least squares with extrapolation. There are also approaches 
that updates all variables simultaneously at each iteration. For example, \cite{buchanan2005damped} employs the damped Newton method for matrix factorization with missing values. Usually, the damped Newton method converges faster than BCD-type ones but has much higher per-iteration complexity.

Tensor factorization from incomplete data has also been studied for many years such as the CANDECOMP / PARAFAC (CP) tensor factorization with missing values in \cite{acar2010scalable}, the weighted nonnegative CP tensor factorization in \cite{paatero1997weighted}, the weighted Tucker decomposition in \cite{sorber2013structured, filipovic2013tucker}, the nonnegative Tucker decomposition with missing data in \cite{morup2008algorithms}, and the recently proposed Bayesian CP factorization of incomplete tensors in \cite{zhao2014bayesian}. BCD-type method has also been employed for solving tensor factorization with missing values. For example, \cite{walczak2001dealing} use the EM method for Tucker decomposition from incomplete data, \cite{filipovic2013tucker} uses block coordinate minimization for the weighted Tucker decomposition, and \cite{morup2008algorithms} and \cite{xu2014NTD} solve the nonnegative Tucker decomposition with missing data by the multiplicative updating and alternating prox-linearization method, respectively. There are also non-BCD-type methods such as the Gauss-Newton method in \cite{tomasi2005parafac} and nonlinear conjugate gradient method for CP factorization with missing values, and the damped Newton method in \cite{phan2011damped} for nonnegative Tucker decomposition.  

With all entries observed, \eqref{eq:main1} becomes the best rank-$(r_1,\ldots,r_N)$ approximation problem in \cite{de2000best}, where a higher-order orthogonality iteration (HOOI) method is given. Although HOOI often works well, no convergence result has been shown in the literature except that it makes the objective value nonincreasing at the iterates. As a special case of our algorithm (see Algorithm \ref{alg:ihooi} below), we will give its global convergence in terms of a first-order optimality condition.

Although not explicitly formulated, \eqref{eq:main1} has been used in \cite{geng2011face} for face recognition with incomplete training data and in \cite{geng2009facial} for facial age estimation. Both works achieve the incomplete HOSVD by the EM method which alternates between the imputation of missing values and performing HOOI to the most recently estimated tensor. Our algorithm (see Algorithm \ref{alg:ihooi} below) is similar to the EM method. However, our method performs just one HOOI iteration within each cycle of updating factor matrices instead of running HOOI for many iterations as done in \cite{geng2009facial, geng2011face}, and thus our method has much cheaper per-cycle complexity and faster overall convergence. Another closely related work is \cite{chen2013simultaneous}, which proposes the simultaneous tensor decomposition and completion (STDC) without orthogonality on factor matrices. It makes the core tensor of the same size as the original tensor and square factor matrices. In addition, it models STDC by nuclear norm regularized minimization, which can be much more expensive than \eqref{eq:main1} to solve. 

\subsubsection*{Low-rank tensor completion} Like \eqref{eq:main1}, all other tensor factorization with missing values can also be used to estimate the unobserved entries of the underlying tensor. When the tensor has some low-rankness property, the estimation can be highly accurate. For example, \cite{filipovic2013tucker} applies the Tucker factorization with missing data to LRTC and can have fitting error low to the order of $10^{-5}$ for randomly generated low-multilinear-rank tensors. The work \cite{liufactor} employs CP factorization with missing data and also demonstrates that low-multilinear-rank tensors can be reconstructed into high accuracy. 

Many other methods for LRTC directly impute the missing entries such that the reconstructed tensor has low-rankness property. The pioneering work \cite{liu2013tensor} proposes to minimize the weighted nuclear norm of all mode matricization (see the definition in section \ref{sec:notation}) of the estimated tensor. Various methods are applied in \cite{liu2013tensor} to the weighted nuclear norm minimization including BCD, the proximal gradient, and the alternating direction method of multiplier (ADMM). The same idea is employed in \cite{gandy2011tensor} to general low-multilinear-rank tensor recovery. Recently, \cite{romera2013new} uses, as a regularization term, a tight convex relaxation of the average of the ranks of all mode matricization and applies ADMM to solve the proposed model. The work \cite{mu2013square} proposes to reshape the underlying tensor into a more ``squared'' matrix and then minimize the nuclear norm of the reshaped matrix. It is theoretically shown and also numerically demonstrated in \cite{mu2013square} that if the underlying tensor has at least \emph{four} modes, the ``squared'' method can perform better than the weighted nuclear norm minimization in \cite{liu2013tensor}. 
Besides convex optimization methods, nonconvex heuristics have also been proposed for LRTC. For example, \cite{kressner2013low} explicitly constrains the solution in a low-multilinear-rank manifold and employs the Riemannian conjugate gradient to solve the problem, and \cite{tmac2015} applies low-rank matrix factorization to each mode matricization of the underlying tensor and proposes a parallel matrix factorization model, which is then solved by alternating least squares method.

\subsection{Contributions} We summarize our contributions as follows.
\begin{itemize}
\renewcommand\labelitemi{--}
\item We give an explicit formulation of the incomplete HOSVD problem. Although the problem has appeared in many applications, it has never been explicitly formulated as an optimization problem\footnote{The incomplete Tucker decomposition in \cite{liu2013tensor, filipovic2013tucker} has no orthogonality constraint on the factor matrices and thus differs from our model, and \cite{geng2009facial, geng2011face} apply incomplete HOSVD without explicitly giving a formulation of their problems.}, and thus it is not clear that which objective existing methods are pursuing and whether they have convergence results. An explicit formulation uncovers the objective and enables analyzing the existing methods and also designing more efficient and reliable algorithms.  
\item We also present a novel algorithm for solving the incomplete HOSVD problem based on BCU method. Under some mild assumptions, global convergence of the algorithm is shown in terms of a first-order optimality condition. The convergence result implies, as a special case, that of the popular HOOI heuristic method \cite{de2000best} for finding the best rank-$(r_1,\ldots,r_N)$ approximation of a given tensor, and hence we, for the first time, give the global convergence of the HOOI method.
\item Numerical experiments are performed to test the ability of the proposed algorithm on recovering the factors of underlying tensors. Compared to the method in \cite{filipovic2013tucker} for solving Tucker factorization from incomplete data, our algorithm not only is more efficient but also can give more reliable factors. We also test the proposed method on the LRTC problem and demonstrate that our algorithm can outperform state-of-the-art methods in both running time and solution quality.
\item In addition, we apply our method to face recognition and MRI image reconstruction problems and demonstrate that it can perform well on both applications.
\end{itemize}

\subsection{Notation and preliminaries}\label{sec:notation} We use bold capital letters $\vX,\vY,\ldots$ for matrices and bold caligraphical letter $\bm{\cX},\bm{\cY},\ldots$ for tensors. $\vI$ is reserved for the identity matrix and $\vzero$ for zero matrix, and their dimensions are known from the context. For $n=1,\ldots,N$, we denote $\cO_n=\{\vA_n:\,\vA_n^\top\vA_n=\vI\}$ as the manifold of the $n$-th factor matrix. We use $\sigma_i(\vX)$ to denote the $i$-th largest singular value of $\vX$. By compact SVD of $\vX$, we always mean $\vX=\vU_x\bm{\Sigma}_x\vV_x^\top$ with $\bm{\Sigma}_x$ having all positive singular values of $\vX$ on its diagonal and $\vU_x$ and $\vV_x$ containing the corresponding left and right singular vectors. The $(i_1,\ldots,i_N)$-th component of an $N$-way tensor $\bm{\mcx}$ is denoted as $x_{i_1\ldots i_N}$. For $\bm{\mcx},\bm{\mcy}\in\mbr^{m_1\times\ldots\times m_N}$, their inner product is defined in the same way as that for matrices, i.e.,
$$\langle\bm{\mcx},\bm{\mcy}\rangle=\sum_{i_1=1}^{m_1}\cdots\sum_{i_N=1}^{m_N}x_{i_1\ldots i_N}\cdot y_{i_1\ldots i_N}.$$
The Frobenius norm of $\bm{\mcx}$ is defined as $\|\bm{\mcx}\|_F=\sqrt{\langle\bm{\mcx},\bm{\mcx}\rangle}.$

We review some basic concepts about tensor below. For more details, the readers are referred to \cite{kolda2009tensor}.

A \emph{fiber} of $\bm{\mcx}$ is a vector obtained by fixing all indices of $\bm{\mcx}$ except one, and a \emph{slice} of $\bm{\mcx}$ is a matrix by fixing all indices of $\bm{\mcx}$ except two. 
The \emph{vectorization} of $\bm{\cX}$ gives a vector, which is obtained by stacking all mode-1
fibers of $\bm{\cX}$ and denoted by $\vvec(\bm{\cX})$.
The mode-$n$ \emph{matricization} (also called \emph{unfolding}) of  $\bm{\mcx}$ is denoted as $\mbfx_{(n)}$ or $\unfold_n(\bm{\cX})$, which is a matrix with columns being the mode-$n$ fibers of $\bm{\mcx}$ in the lexicographical order, and we define $\fold_n$ to reverse the process, i.e., $\fold_n(\unfold_n(\bm{\mcx}))=\bm{\mcx}$.
The mode-$n$ product of $\bm{\mcx}\in\mbr^{m_1\times\cdots\times m_N}$ with $\mbfb\in\mbr^{p\times m_n}$ is written as $\bm{\mcx}\times_n\mbfb$ which gives a tensor in $\mbr^{n_1\times \cdots \times m_{n-1}\times p\times m_{n+1}\times \cdots\times m_N}$ and is defined component-wisely by
\begin{equation}\label{eq:tm}
(\bm{\mcx}\times_n \mbfb)_{i_1\cdots i_{n-1}ji_{n+1}\cdots i_N}=\sum_{i_n=1}^{m_n}x_{i_1i_2\cdots i_N}\cdot b_{ji_n}.
\end{equation} 

For any tensor $\bm{\cG}$ and matrices $\vX$ and $\vY$ of appropriate size, it holds
\begin{equation}\label{eq:ttm}
\bm{\cG}\times_n(\vX\vY)=(\bm{\cG}\times_n\vY)\times_n\vX, \,\forall n.
\end{equation}
If $\bm{\mcx}=\bm{\mcc}\times_{i=1}^N\mbfa_i:=\bm{\mcc}\times_1\mbfa_1\cdots\times_N\mbfa_N$, then 
\begin{equation}\label{eq:mat}\mbfx_{(n)}=\mbfa_n\mbfc_{(n)}\left(\otimes_{\substack{i=N\\i\neq n}}^1\mbfa_i\right)^\top,\,\forall n.
\end{equation}
and
\begin{equation}\label{eq:vec}
\vvec(\bm{\mcx})=\left(\otimes_{n=N}^1\mbfa_n\right)\vvec(\bm{\mcc}),
\end{equation}
where 
\begin{equation}\label{eq:kron}
\otimes_{n=N}^1\mbfa_n:=\mbfa_N\otimes\cdots\otimes\mbfa_1,
\end{equation}
 and $\mbfa\otimes\mbfb$ denotes Kronecker product of $\mbfa$ and $\mbfb$. 


For any matrices $\vA$, $\vB$, $\vC$ and $\vD$ of appropriate size, we have (c.f. \cite[Chapter 4]{horn1991topics})
\begin{subequations}\label{eq:kronpro}
\begin{align}
&\mbfa\otimes\mbfb\otimes\mbfc = (\mbfa\otimes\mbfb)\otimes\mbfc = \mbfa\otimes(\mbfb\otimes\mbfc), \label{eq:kron1}\\
&(\mbfa\otimes\mbfb)(\mbfc\otimes\mbfd) = (\mbfa\mbfc)\otimes(\mbfb\mbfd),\label{eq:kron2}\\
&(\mbfa\otimes\mbfb)^\top=\mbfa^\top\otimes\mbfb^\top, \label{eq:kron3}\\
&(\mbfa\otimes\mbfb)^\dagger=\mbfa^\dagger\otimes\mbfb^\dagger,\label{eq:kron4}
\end{align}
\end{subequations}
where $^\dagger$ denotes the Moore-Penrose pseudo-inverse.

\subsection{Outline} The rest of the paper is organized as follows. In section \ref{sec:alg}, we write \eqref{eq:main1} into one equivalent problem and present an algorithm based on BCU. Convergence analysis of the algorithm is given in section \ref{sec:convg}, and numerical results are presented in section \ref{sec:numerical}. Finally, section \ref{sec:conclusion} concludes the paper. 

\section{Algorithm}\label{sec:alg}
In this section, we write \eqref{eq:main1} into one equivalent problem and apply the BCU method to it. We choose BCU because of the problem's nice structure, which enables BCU to be more efficient than full coordinate update method; see \cite{CF2016}.  Convergence analysis of the algorithm will be given in next section.

\subsection{Alternative formulation}

Introducing auxiliary variable $\bm{\mcx}$, we write \eqref{eq:main1} into the following equivalent problem
\begin{equation}\label{eq:main2}
\begin{array}{ll}
\underset{\bm{\mcc},\mbfa,\bm{\mcx}}{\min}\,f(\bm{\mcc},\mbfa,\bm{\mcx})\equiv\frac{1}{2}\|\bm{\mcc}\times_1\mbfa_1\ldots\times_N\mbfa_N-\bm{\mcx}\|_F^2,\\[0.1cm]
\st \vA_n^\top\vA_n=\vI,\, \vA_n\in\RR^{m_n\times r_n},\, n=1,\ldots,N,\, \mcp_\Omega(\bm{\mcx})=\mcp_\Omega(\bm{\mcm}).
\end{array}
\end{equation}
The equivalence between \eqref{eq:main1} and \eqref{eq:main2} can be easily seen by setting $\cP_{\Omega^c}(\bm{\cX})=\cP_{\Omega^c}(\bm{\mcc}\times_1\mbfa_1\ldots\times_N\mbfa_N)$ in \eqref{eq:main2}. This transformation is similar to those in \cite{wen2012lmafit, ling2012decentralized, admmNMF2012} for low-rank matrix factorization with missing values and also to the EM method in \cite{walczak2001dealing} for CP factorization from incomplete data.
The objective of \eqref{eq:main2} is block multi-convex, and one can apply the alternating least squares (ALS) method to it. The ALS method is also new for finding a solution to \eqref{eq:main1}. However, we will focus on another algorithm and present the ALS method in Appendix \ref{app:als}.
 
Note that with $\vA$ and $\bm{\cX}$ fixed in \eqref{eq:main2}, the optimal $\bm{\cC}$ is given by 
$$\bm{\cC}=\bm{\cX}\times_1\vA_1^\top\ldots\times_N\vA_N^\top,$$
and thus one can eliminate $\bm{\cC}$ by plugging in the above formula to \eqref{eq:main2} and have the following equivalent problem
\begin{equation}\label{eq:main3}
\begin{array}{ll}
\underset{\mbfa,\bm{\mcx}}{\min}\,g(\mbfa,\bm{\mcx})\equiv\frac{1}{2}\|\bm{\cX}\times_1\mbfa_1\vA_1^\top\ldots\times_N\mbfa_N\vA_N^\top-\bm{\mcx}\|_F^2,\\[0.1cm]
\st \vA_n^\top\vA_n=\vI,\, \vA_n\in\RR^{m_n\times r_n},\, n=1,\ldots,N,\, \mcp_\Omega(\bm{\mcx})=\mcp_\Omega(\bm{\mcm}).
\end{array}
\end{equation}
The transformation from \eqref{eq:main2} to \eqref{eq:main3} is similar to that employed by the HOOI method in \cite{de2000best} for finding the best rank-$(r_1,\ldots,r_N)$ of a given tensor.

\subsection{Incomplete HOOI}
As what is done in the HOOI method, we propose to alternatingly update $\vA_1,\ldots,\vA_N$ and $\bm{\cX}$ by minimizing $g$ with respect to one of them while the remaining variables are fixed, one at a time. Specifically, let $(\hat{\vA}_1,\ldots,\hat{\vA}_N, \hat{\bm{\cX}})$ be the current values of the variables and satisfy the feasibility constraints. We renew them to $(\tilde{\vA}_1,\ldots,\tilde{\vA}_N, \tilde{\bm{\cX}})$ through the following updates
\begin{subequations}\label{eq:update3}
\begin{align}
&\tilde{\vA}_n=\argmin_{\vA_n^\top\vA_n=\vI}g(\tilde{\vA}_{<n},\vA_n,\hat{\vA}_{>n},\hat{\bm{\cX}}),\, n=1,\ldots,N,\label{eq:update3-a}\\
&\tilde{\bm{\cX}}=\argmin_{\cP_\Omega(\bm{\cX})=\cP_\Omega(\bm{\cM})}g(\tilde{\vA},\bm{\cX}).\label{eq:update3-x}
\end{align}
\end{subequations}

\subsubsection*{$\vA$-subproblems} Note that if $\vA_n^\top\vA_n=\vI,\forall n$, then
\begin{equation}\label{eq}\|\bm{\cX}\times_1\mbfa_1\vA_1^\top\ldots\times_N\mbfa_N\vA_N^\top-\bm{\mcx}\|_F^2=\|\bm{\cX}\|_F^2-\|\bm{\cX}\times_1\vA_1^\top\ldots\times_N\vA_N^\top\|_F^2.
\end{equation}
Hence, from \eqref{eq:mat}, the update of $\vA_n$ in \eqref{eq:update3-a} can be written as
\begin{equation}\label{equpdate3-a}\tilde{\vA}_n=\argmax_{\vA_n^\top\vA_n=\vI}\|\vA_n^\top\hat{\vG}_n\|_F^2,\end{equation}
where $\hat{\vG}_n=\unfold_n(\hat{\bm{\cX}}\times_{i=1}^{n-1}\tilde{\vA}_i^\top\times_{i=n+1}^N\hat{\vA}_i^\top)$ .
Let $\vU_n$ be the matrix containing the left $r_n$ leading singular vectors of $\hat{\vG}_n$. Then $\tilde{\vA}_n=\vU_n$ solves \eqref{equpdate3-a}. Note that for any orthogonal matrix $\vQ_n$, $\vU_n\vQ_n$ is also a solution of \eqref{equpdate3-a}, and the observation will play an important role in the convergence analysis of the proposed algorithm in section \ref{sec:convg}.

\subsubsection*{$\bm{\cX}$-subproblem} The problem in \eqref{eq:update3-x} can be reduced to solving a normal equation. However, the equation can be extremely large and expensive to solve by a direct method or even an iterative solver for linear system. We propose to approximately solve \eqref{eq:update3-x} by the gradient descent method. Splitting $\bm{\cX}=\cP_{\Omega^c}(\bm{\cX})+\cP_\Omega(\bm{\cM})$ and using \eqref{eq},  we write \eqref{eq:update3-x} equivalently into
\begin{equation}\label{eq:rest}\min_{\bm{\cX}}h(\bm{\cX};\tilde{\vA})\equiv\frac{1}{2}\|\cP_{\Omega^c}(\bm{\cX})\|_F^2-\frac{1}{2}\|\cP_{\Omega^c}(\bm{\cX})\times_{i=1}^N\tilde{\vA}_i^\top+\cP_\Omega(\bm{\cM})\times_{i=1}^N\tilde{\vA}_i^\top\|_F^2.
\end{equation}
Since $\cP_\Omega(\hat{\bm{\cX}})=\cP_\Omega(\bm{\cM})$, it is not difficult to show (see Appendix \ref{app:pf-partx})
\begin{equation}\label{eq:grad-partx}\nabla_{\bm{\cX}}h(\hat{\bm{\cX}};\tilde{\vA})=\cP_{\Omega^c}(\hat{\bm{\cX}})-\cP_{\Omega^c}(\hat{\bm{\cX}}\times_{i=1}^N\tilde{\vA}_i\tilde{\vA}_i^\top).
\end{equation}
According to the next lemma and \cite[Theorem 3.1]{BeckTeboulle2009}, one can solve \eqref{eq:rest} or equivalently \eqref{eq:update3-x} by iteratively updating $\bm{\cX}$ through (starting from $\tilde{\bm{\cX}}=\hat{\bm{\cX}}$)
\begin{equation}\label{eq:grad}\tilde{\bm{\cX}}\leftarrow \cP_{\Omega^c}(\tilde{\bm{\cX}}\times_{i=1}^N\tilde{\vA}_i\tilde{\vA}_i^\top)+\cP_\Omega(\bm{\cM}).
\end{equation}

\begin{lemma}\label{lem:lip}
If $\vA_n^\top\vA_n=\vI,\,\forall n$, then $h(\bm{\cX};\vA)$ defined in \eqref{eq:rest} is convex with respect to $\bm{\cX}$, and $\nabla_{\bm{\cX}}h(\bm{\cX};\vA)$ is Lipschitz continuous with constant \emph{one}, i.e.,
$$\|\nabla_{\bm{\cX}}h(\hat{\bm{\cX}};\vA)-\nabla_{\bm{\cX}}h(\hat{\bm{\cY}};\vA)\|_F \le \|\hat{\bm{\cX}}-\hat{\bm{\cY}}\|_F,\,\forall \hat{\bm{\cX}},\hat{\bm{\cY}}.$$
\end{lemma}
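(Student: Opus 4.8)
The plan is to vectorize $\bm{\cX}$ and exhibit $h(\cdot;\vA)$ as a quadratic with a constant, explicitly computable Hessian that is sandwiched between $\vzero$ and $\vI$; convexity and the Lipschitz bound then both drop out of that sandwich.

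First I would set $\vx=\vvec(\bm{\cX})$, write $\vS$ for the diagonal $0/1$ matrix realizing $\cP_{\Omega^c}$ on vectorized tensors, put $\vp=\vvec(\cP_\Omega(\bm{\cM}))$ (a constant), and $\vK=\otimes_{n=N}^1\vA_n^\top$. Using the vectorization rule \eqref{eq:vec} together with the Kronecker identities \eqref{eq:kronpro}, the mode-product expression inside the second norm in \eqref{eq:rest} becomes $\vK(\vS\vx+\vp)$, so $h(\bm{\cX};\vA)=\tfrac12\|\vS\vx\|^2-\tfrac12\|\vK(\vS\vx+\vp)\|^2$, a quadratic in $\vx$ with Hessian $\vH=\vS-\vS\vK^\top\vK\vS$ (here $\vS^\top=\vS^2=\vS$).

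Second, I would record the two projection facts that drive everything: $\vS$ is an orthogonal projection, and, because $\vA_n^\top\vA_n=\vI$ for every $n$, the Kronecker identities give $\vK\vK^\top=\otimes_{n=N}^1(\vA_n^\top\vA_n)=\vI$, so $\vP:=\vK^\top\vK=\otimes_{n=N}^1(\vA_n\vA_n^\top)$ is also an orthogonal projection. Then for any $\vv$ one has $\vv^\top\vH\vv=\|\vS\vv\|^2-\|\vP\vS\vv\|^2$, and since both $\vS$ and $\vP$ are nonexpansive projections this quantity lies in $[0,\|\vv\|^2]$; that is, $\vzero\preceq\vH\preceq\vI$.

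Finally I would conclude: $\vH\succeq\vzero$ gives convexity of $h(\cdot;\vA)$ in $\bm{\cX}$, and $\|\vH\|_\op\le1$ gives $\|\nabla_{\bm{\cX}}h(\hat{\bm{\cX}};\vA)-\nabla_{\bm{\cX}}h(\hat{\bm{\cY}};\vA)\|_F=\|\vH(\vvec(\hat{\bm{\cX}})-\vvec(\hat{\bm{\cY}}))\|\le\|\hat{\bm{\cX}}-\hat{\bm{\cY}}\|_F$, the desired Lipschitz constant. The only genuinely fiddly step is the first one — checking carefully that the mode-$n$ products vectorize to $\vK(\vS\vx+\vp)$ and that $\vA_n^\top\vA_n=\vI$ propagates to $\vK\vK^\top=\vI$ — but this is pure bookkeeping with \eqref{eq:vec} and \eqref{eq:kronpro}; everything after it is a one-line estimate for a product of two orthogonal projections. (Alternatively one can stay entirely in tensor notation: with $\bm{\cE}=\cP_{\Omega^c}(\hat{\bm{\cX}}-\hat{\bm{\cY}})$, the gradient difference equals $\cP_{\Omega^c}(\bm{\cE}-\bm{\cE}\times_{i=1}^N\vA_i\vA_i^\top)$, and the bound follows because $\bm{\cE}\mapsto\bm{\cE}-\bm{\cE}\times_{i=1}^N\vA_i\vA_i^\top$ realizes an orthogonal projection and $\cP_{\Omega^c}$ is nonexpansive.)
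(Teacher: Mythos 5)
Your proof is correct, and it follows essentially the same route the paper itself sets up: the paper proves only the gradient formula \eqref{eq:grad-partx} in Appendix \ref{app:pf-partx} via the very vectorization $h=\tfrac12\|\vD_2\vvec(\bm{\cX})\|^2-\tfrac12\|(\otimes_{i=N}^1\vA_i^\top)(\vD_2\vvec(\bm{\cX})+\vD_1\vvec(\bm{\cM}))\|^2$ that you use, and leaves the lemma itself unproved. Your observation that $\vK\vK^\top=\vI$ makes $\vP=\vK^\top\vK$ an orthogonal projection, so that the constant Hessian $\vS-\vS\vP\vS$ satisfies $\vzero\preceq\vH\preceq\vI$, is exactly the missing step and is carried out correctly.
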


Numerically, we observe that performing just \emph{one} update in \eqref{eq:grad} is enough to make sufficient decrease of the objective and the algorithm can converge surprisingly fast. Therefore, we perform only one update in \eqref{eq:grad} to $\bm{\cX}$, and that is exactly letting
\begin{equation}\label{eq:sol-x2}
\tilde{\bm{\cX}}=\argmin_{\cP_\Omega(\bm{\cX})=\cP_\Omega(\bm{\cM})}\|\bm{\cX}-\hat{\bm{\cX}}\times_{i=1}^N\tilde{\vA}_i\tilde{\vA}_i^\top\|_F^2.
\end{equation} The pseudocode of the resulting method for solving \eqref{eq:main3} is shown in Algorithm \ref{alg:ihooi}.

\begin{algorithm}\caption{Incomplete higher-order orthogonality iteration (iHOOI)}\label{alg:ihooi}
\begin{algorithmic}[1]
{\small
\STATE \textbf{Input:} index set $\Omega$, observed entries $\mcp_\Omega(\bm{\mcm})$, and initial point $(\mbfa^0,\bm{\mcx}^0)$ with $(\vA_i^0)^\top \vA_i^0=\vI,\forall i$ and $\mcp_\Omega(\bm{\mcx}^0)=\mcp_\Omega(\bm{\mcm})$.
\FOR{$k=0,1,\ldots$}
\FOR{$n = 1,\ldots,N$}
\STATE Let $\vA_n^{k+1}$ be the matrix containing the left $r_n$ leading singular vectors of $\vG_n^k$ where 
\begin{equation}\label{eq:gnk}
\vG_n^k=\unfold_n(\bm{\cX}^k\times_{i=1}^{n-1}(\vA_i^{k+1})^\top\times_{i=n+1}^N(\vA_i^k)^\top).
\end{equation}
\ENDFOR
\STATE Update the entries of $\bm{\cX}$ not in $\Omega$ by 
\begin{equation}\label{eq:ihooi-x}\cP_{\Omega^c}(\bm{\mcx}^{k+1})=\cP_{\Omega^c}\left(\bm{\mcx}^k\times_{i=1}^N\big(\vA_i^{k+1}(\vA_i^{k+1})^\top\big)\right).
\end{equation}
\IF{stopping criterion is satisfied}
\STATE Let $\bm{\cC}=\bm{\cX}^{k+1}\times_{i=1}^N (\vA_i^{k+1})^\top$ and return $(\bm{\mcc},\mbfa^{k+1},\bm{\mcx}^{k+1})$.
\ENDIF
\ENDFOR
}
\end{algorithmic}
\end{algorithm}

\begin{remark}[Comparison between Algorithms \ref{alg:ihooi} and \ref{alg:lrtfit}]
Since $\bm{\cC}$ is absorbed into the update of $\vA$ and $\bm{\cX}$ in Algorithm \ref{alg:ihooi}, we expect that it would perform no worse than Algorithm \ref{alg:lrtfit} for solving \eqref{eq:main1} in terms of convergence speed and solution quality and will demonstrate it in section \ref{sec:numerical} (e.g., see Tables \ref{table:facereg} and \ref{table:mri_rec}). However, notice that the update of $\vA_n$ in Algorithm \ref{alg:ihooi} typically requires SVD of $\vG_n^k$ and can be expensive if $m_n$ and $\Pi_{i\neq n}r_i$ are both large (e.g., see the test in secion \ref{sec:facerecog}).
\end{remark}

\begin{remark}[Differences between Algorithm \ref{alg:ihooi} and EM methods in the literature] Algorithm \ref{alg:ihooi} is very similar to some existing EM methods such as those in \cite{walczak2001dealing, geng2009facial, geng2011face}. The EM method in \cite{walczak2001dealing} is somehow a mixture of  Algorithm \ref{alg:ihooi} and Algorithm \ref{alg:lrtfit} in Appendix \ref{app:als}. Its $\vA$-updates are the same as those in Algorithm \ref{alg:ihooi} while its $\bm{\cX}$-update is the same as that in Algorithm \ref{alg:lrtfit}. The difference between Algorithm \ref{alg:ihooi} and the methods in \cite{geng2009facial, geng2011face} is that within each ``\textbf{for}'' loop, the latter methods perform $\vA$-updates iteratively to get a rank-$(r_1,\ldots,r_N)$ approximation of the estimated $\bm{\cX}$. 
\end{remark}

%

\subsection{Extension} One generalization to the incomplete HOSVD is to find the HOSVD of an underlying tensor $\bm{\cM}$ from its underdetermined measurements $\cL(\bm{\cM})$, where $\cL$ is a linear operator with adjoint $\cL^*$. For this scenario, one can consider to solve the problem 
\begin{equation}\label{eq:gen-prob}
\begin{array}{ll}
\underset{\bm{\mcx},\mbfa}{\min}\,\|\bm{\mcx}\times_1\mbfa_1\vA_1^\top\ldots\times_N\mbfa_N\vA_N^\top-\bm{\mcx}\|_F^2,\\[0.1cm]
\st \vA_n^\top\vA_n=\vI,\, \vA_n\in\RR^{m_n\times r_n},\, n=1,\ldots,N,\ \cL(\bm{\cX})=\cL(\bm{\cM}).
\end{array}
\end{equation}
A simple modification of Algorithm \ref{alg:ihooi} suffices to handle \eqref{eq:gen-prob} by using the same $\vA$-updates in \eqref{equpdate3-a} and changing the $\bm{\cX}$-update in \eqref{eq:sol-x2} to 
$$\tilde{\bm{\cX}}=\argmin_{\cL(\bm{\cX})=\cL(\bm{\cM})}\|\bm{\cX}-\hat{\bm{\cX}}\times_{i=1}^N\tilde{\vA}_i\tilde{\vA}_i^\top\|_F^2,$$
which is equivalent to letting
$$\tilde{\bm{\cX}}=\hat{\bm{\cX}}\times_{i=1}^N\tilde{\vA}_i\tilde{\vA}_i^\top+\cL^*(\cL\cL^*)^{-1}(\cL(\bm{\cM})-\cL(\hat{\bm{\cX}}\times_{i=1}^N\tilde{\vA}_i\tilde{\vA}_i^\top)).$$
From the above update, we see that to make the modified algorithm efficient, the evaluation of $\cL,\cL^*$ and $(\cL\cL^*)^{-1}$ needs be cheap such as $\cL=\cP_\Omega$ in the incomplete HOSVD and $\cL$ being a partial Fourier transformation considered in \cite{gandy2011tensor} for low-rank tensor recovery.

\section{Convergence Analysis}\label{sec:convg} In this section, we analyze the convergence of Algorithm \ref{alg:ihooi}. 
We show its global convergence in terms of the first-order optimality condition of \eqref{eq:main2}. The main assumption we make is a condition (see \eqref{eq:gap-svd}) similar to that made by the orthogonal iteration method (c.f. \cite[section 7.3.2]{GolubVanLoan1996}) for computing $r$-dimensional dominant invariant subspace of a matrix.

\subsection{First-order optimality conditions}
The Lagrangian function of \eqref{eq:main3} is
$$\cL_g(\vA,\bm{\cX},\bm{\Lambda},\bm{\cY})=g(\vA,\bm{\cX})+\frac{1}{2}\sum_{n=1}^N\langle\bm{\Lambda}_n,\vA_n^\top\vA_n-\vI\rangle+\langle\bm{\cY},\cP_\Omega(\bm{\cX})-\cP_\Omega(\bm{\cM})\rangle,$$
where $\bm{\Lambda}=(\bm{\Lambda}_1,\ldots,\bm{\Lambda}_N)$ and $\bm{\cY}$ are Lagrangian multipliers, and $\bm{\Lambda}_n$'s are symmetric matrices.
Letting $\nabla \cL_g=\vzero$, we derive the KKT conditions of \eqref{eq:main3} to be
\begin{subequations}\label{eq:kkt3}
\begin{align}
\vG_n\vG_n^\top\vA_n-\vA_n\bm{\Lambda}_n=&\,\vzero,\,n=1,\ldots,N,\label{eq:kkt3-a}\\
\bm{\cX}-\bm{\cX}\times_{i=1}^N\vA_i\vA_i^\top+\cP_\Omega(\bm{\cY})=&\,\vzero,\label{eq:kkt3-x}\\
\vA_n^\top\vA_n-\vI=&\,\vzero,\,n=1,\ldots,N,\label{eq:kkt3-fea-a}\\
\cP_\Omega(\bm{\cX})-\cP_\Omega(\bm{\cM})=&\,\vzero,\label{eq:kkt3-fea-x}
\end{align}
\end{subequations}
where $$\vG_n=\unfold_n\big(\bm{\cX}\times_{i=1}^{n-1}\vA_i^\top
\times_{i=n+1}^N\vA_i^\top\big).$$ 
From \cite[Proposition 3.1.1]{Bertsekas-NLP}, we have that any local minimizer of \eqref{eq:main3} satisfies the conditions in \eqref{eq:kkt3}. Due to nonconvexity of \eqref{eq:main3}, we cannot in general guarantee global optimality, so instead we aim at showing the first-order optimality conditions in \eqref{eq:kkt3} holds in the limit. 

\subsection{Convergence result} Assuming complete observations, Algorithm \ref{alg:ihooi} includes the HOOI algorithm in \cite{de2000best} as a special case. To the best of our knowledge, in general, no convergence result has been established for HOOI, except that it makes the objective value nonincreasing at the iterates and thus converging to some real number \cite[pp. 478]{kolda2009tensor}. The special case of HOOI with $r_n=1,\forall n$ has been analyzed in the literature (e.g., \cite{zhang2001rank, uschmajew2014ALS}). In this subsection, we make an assumption similar to that assumed by the orthogonal iteration (e.g., \cite[Theorem 7.31]{GolubVanLoan1996}) and show global convergence of Algorithm \ref{alg:ihooi} in terms of a first-order optimality condition. Our result implies the convergence of HOOI as a special case.


We first establish a few lemmas.
\begin{lemma}\label{lem:cvg2-dec}
Let $\{(\vA^k,\bm{\cX}^k)\}_{k=1}^\infty$ be the sequence generated by Algorithm \ref{alg:ihooi}. Then
\begin{subequations}\label{eq:cvg2-lim}
\begin{align}
&\lim_{k\to\infty} \|(\vA_n^{k+1})^\top\vG_n^k\|_F^2-\|(\vA_n^k)^\top\vG_n^k\|_F^2=0,\, n=1,\ldots,N,\label{eq:cvg2-lim-a}\\
&\lim_{k\to\infty}\bm{\cX}^k-\bm{\cX}^{k+1}=\vzero\label{eq:cvg2-lim-x},
\end{align}
\end{subequations}
where $\vG_n^k$ is defined in \eqref{eq:gnk}.
\end{lemma}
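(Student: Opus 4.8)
\emph{Proof strategy.} The plan is to show that the objective $g$ of \eqref{eq:main3}, evaluated along the iterates of Algorithm \ref{alg:ihooi} together with the intermediate points produced after each single block update, is monotonically non-increasing, and to read off from the per-step decreases exactly the two non-negative quantities appearing in \eqref{eq:cvg2-lim}. Since $g\ge 0$, monotonicity forces convergence of $\{g(\vA^k,\bm{\cX}^k)\}$, hence summability of the successive differences, and then each non-negative term contributing to a difference must tend to $0$.

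First I would put $g$ into a convenient form: because every iterate satisfies $(\vA_n^k)^\top\vA_n^k=\vI$, identity \eqref{eq} gives $g(\vA,\bm{\cX})=\tfrac12\|\bm{\cX}\|_F^2-\tfrac12\|\bm{\cX}\times_{i=1}^N\vA_i^\top\|_F^2$ at all relevant points. Now fix $k$ and track $g$ through the $N$ successive $\vA$-updates with $\bm{\cX}$ held at $\bm{\cX}^k$. When only $\vA_n$ is varied, with $\vA_{<n}$ at their new values and $\vA_{>n}$ at their old values, \eqref{eq:mat} identifies $\|\bm{\cX}^k\times_{i<n}(\vA_i^{k+1})^\top\times_n\vA_n^\top\times_{i>n}(\vA_i^k)^\top\|_F^2=\|\vA_n^\top\vG_n^k\|_F^2$ with $\vG_n^k$ exactly as in \eqref{eq:gnk}; since $\vA_n^{k+1}$ is by construction a maximizer of $\|\vA_n^\top\vG_n^k\|_F^2$ over $\vA_n^\top\vA_n=\vI$, this term cannot decrease, so $g$ cannot increase, and the precise drop is $\tfrac12\big(\|(\vA_n^{k+1})^\top\vG_n^k\|_F^2-\|(\vA_n^k)^\top\vG_n^k\|_F^2\big)\ge0$. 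Chaining over $n=1,\dots,N$ gives $g(\vA^{k+1},\bm{\cX}^k)\le g(\vA^k,\bm{\cX}^k)$ with total drop $\tfrac12\sum_{n=1}^N\big(\|(\vA_n^{k+1})^\top\vG_n^k\|_F^2-\|(\vA_n^k)^\top\vG_n^k\|_F^2\big)$.

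Next I would treat the $\bm{\cX}$-update. On the feasible set $\{\cP_\Omega(\bm{\cX})=\cP_\Omega(\bm{\cM})\}$ the map $\bm{\cX}\mapsto g(\vA^{k+1},\bm{\cX})$ differs from $h(\cdot;\vA^{k+1})$ of \eqref{eq:rest} only by the constant $\tfrac12\|\cP_\Omega(\bm{\cM})\|_F^2$, and by \eqref{eq:grad-partx} the single update \eqref{eq:sol-x2} (equivalently \eqref{eq:grad}) is precisely the unit-step gradient step $\bm{\cX}^{k+1}=\bm{\cX}^k-\nabla_{\bm{\cX}}h(\bm{\cX}^k;\vA^{k+1})$, which stays feasible because the gradient is supported on $\Omega^c$. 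By Lemma \ref{lem:lip}, $\nabla_{\bm{\cX}}h$ is Lipschitz with constant one, so the descent lemma (cf.\ \cite[Theorem 3.1]{BeckTeboulle2009}) yields $h(\bm{\cX}^{k+1};\vA^{k+1})\le h(\bm{\cX}^k;\vA^{k+1})-\tfrac12\|\nabla_{\bm{\cX}}h(\bm{\cX}^k;\vA^{k+1})\|_F^2=h(\bm{\cX}^k;\vA^{k+1})-\tfrac12\|\bm{\cX}^k-\bm{\cX}^{k+1}\|_F^2$, i.e.\ $g(\vA^{k+1},\bm{\cX}^{k+1})\le g(\vA^{k+1},\bm{\cX}^k)-\tfrac12\|\bm{\cX}^k-\bm{\cX}^{k+1}\|_F^2$. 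Adding the two parts gives, for every $k$, $g(\vA^k,\bm{\cX}^k)-g(\vA^{k+1},\bm{\cX}^{k+1})\ge\tfrac12\sum_{n=1}^N\big(\|(\vA_n^{k+1})^\top\vG_n^k\|_F^2-\|(\vA_n^k)^\top\vG_n^k\|_F^2\big)+\tfrac12\|\bm{\cX}^k-\bm{\cX}^{k+1}\|_F^2$, with every term on the right non-negative. Since $g\ge0$ and $\{g(\vA^k,\bm{\cX}^k)\}$ is non-increasing it converges, so the left-hand side is summable over $k$; consequently each non-negative right-hand term tends to $0$, which is exactly \eqref{eq:cvg2-lim-a} and \eqref{eq:cvg2-lim-x}.

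The one point requiring care is the bookkeeping in the second paragraph: matching the partially-updated product $\bm{\cX}^k\times_{i<n}(\vA_i^{k+1})^\top\times_{i>n}(\vA_i^k)^\top$ with the definition \eqref{eq:gnk} of $\vG_n^k$ through \eqref{eq:mat}, chaining the $\vA$-decreases (at fixed $\bm{\cX}^k$) and then the $\bm{\cX}$-decrease (at fixed $\vA^{k+1}$) in the right order, and checking that the orthogonality required by \eqref{eq} holds at every intermediate point, which it does because each $\vA_n^{k+1}$ consists of orthonormal singular vectors. Everything else is routine.
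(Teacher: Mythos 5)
Your proof is correct and follows essentially the same route as the paper: the same per-block identity $g(\vA_{<n}^{k+1},\vA_{\ge n}^k,\bm{\cX}^k)-g(\vA_{\le n}^{k+1},\vA_{>n}^k,\bm{\cX}^k)=\tfrac12\big(\|(\vA_n^{k+1})^\top\vG_n^k\|_F^2-\|(\vA_n^k)^\top\vG_n^k\|_F^2\big)$, the same sufficient-decrease bound $\tfrac12\|\bm{\cX}^k-\bm{\cX}^{k+1}\|_F^2$ for the $\bm{\cX}$-step (the paper cites Lemma~2.1 of \cite{xu2013block} where you invoke the descent lemma directly, but these are the same estimate), and the same telescoping/summability conclusion from $g\ge 0$.
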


\begin{proof}
Note that for all $n$,
$$g(\vA_{< n}^{k+1},\vA_{\ge n}^k,\bm{\cX}^k)-g(\vA_{\le n}^{k+1},\vA_{> n}^k,\bm{\cX}^k)=\frac{1}{2}\big(\|(\vA_n^{k+1})^\top\vG_n^k\|_F^2-\|(\vA_n^k)^\top\vG_n^k\|_F^2\big).$$
Summing up the above inequality over $n$ gives
\begin{equation}\label{eq:cvg2-temp1}
g(\vA^k,\bm{\cX}^k)-g(\vA^{k+1},\bm{\cX}^k)= \frac{1}{2}\sum_{n=1}^N\big(\|(\vA_n^{k+1})^\top\vG_n^k\|_F^2-\|(\vA_n^k)^\top\vG_n^k\|_F^2\big).
\end{equation}
In addition, from Lemma \ref{lem:lip} and \cite[Lemma 2.1]{xu2013block}, it follows that
$$g(\vA^{k+1},\bm{\cX}^k)-g(\vA^{k+1},\bm{\cX}^{k+1})\ge\frac{1}{2}\|\bm{\cX}^k-\bm{\cX}^{k+1}\|_F^2,$$
which together with \eqref{eq:cvg2-temp1} and the nonnegativity of $g$ indicates
$$\sum_{k=0}^\infty\left(\sum_{n=1}^N\big(\|(\vA_n^{k+1})^\top\vG_n^k\|_F^2-\|(\vA_n^k)^\top\vG_n^k\|_F^2\big)+\|\bm{\cX}^k-\bm{\cX}^{k+1}\|_F^2\right)\le 2g(\vA^0,\bm{\cX}^0)<\infty,$$
and thus \eqref{eq:cvg2-lim} immediately follows.
\hfill$\blacksquare$\end{proof}

In general \eqref{eq:cvg2-lim-a} does not imply $(\vA_n^{k+1})^\top\vG_n^k-(\vA_n^k)^\top\vG_n^k\to\vzero$ as $k\to\infty$. However, note that $\vA_n^{k+1}\tilde{\vD}_n$ maximizes $\|\vA_n^\top\vG_n^k\|_F^2$ over $\cO_n$ for any orthogonal matrix $\tilde{\vD}_n$. We can choose a certain orthogonal $\tilde{\vD}_n^{k+1}$ such that $(\vA_n^{k+1}\tilde{\vD}_n^{k+1})^\top\vG_n^k-(\vA_n^k)^\top\vG_n^k\to\vzero$ as $k\to\infty$ under some conditions.

\begin{lemma}[von Neumann's Trace Inequality \cite{mirsky1975trace}]\label{lem:von-ineq}
For any matrices $\vX$ and $\vY$ in $\RR^{s\times t}$, it holds that
\begin{equation}\label{eq:von-ineq}\langle\vX,\vY\rangle\le \sum_{i=1}^{\min(s,t)}\sigma_i(\vX)\sigma_i(\vY).
\end{equation}
The inequality \eqref{eq:von-ineq} holds with equality if $\vX$ and $\vY$ have the same left and right singular vectors.
\end{lemma}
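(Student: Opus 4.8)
The plan is to prove von Neumann's inequality by diagonalizing $\vX$ and $\vY$ with the SVD, turning the bound into a statement about doubly stochastic matrices, and then invoking Birkhoff's theorem together with the rearrangement inequality. First I would pad $\vX$ and $\vY$ with zero rows (or columns) so that both become $n\times n$ with $n=\max(s,t)$; this changes neither $\langle\vX,\vY\rangle$ nor the list of nonzero singular values, and the new singular values are all $0$, so the decreasing order is preserved. Writing full SVDs $\vX=\vU\bm{\Sigma}\vV^\top$ and $\vY=\vP\bm{\Theta}\vQ^\top$ with $\bm{\Sigma}=\Diag(\sigma_1,\ldots,\sigma_n)$, $\bm{\Theta}=\Diag(\theta_1,\ldots,\theta_n)$ and $\vU,\vV,\vP,\vQ$ orthogonal, cyclicity of the trace gives $\langle\vX,\vY\rangle=\tr(\vX^\top\vY)=\tr(\bm{\Sigma}\vR\bm{\Theta}\vS)$ with $\vR=\vU^\top\vP$ and $\vS=\vQ^\top\vV$ orthogonal. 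Expanding the trace of a product of two diagonal and two general matrices, $\langle\vX,\vY\rangle=\sum_{i,j}\sigma_i\theta_j R_{ij}S_{ji}=\sum_{i,j}\sigma_i\theta_j T_{ij}$, where $\vT=\vR\circ\vS^\top$ is the entrywise (Hadamard) product.

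Second, I would note that $|\vT|$ (entrywise absolute value) is doubly sub-stochastic: by Cauchy--Schwarz, $\sum_j|T_{ij}|=\sum_j|R_{ij}|\,|S_{ji}|\le\big(\sum_j R_{ij}^2\big)^{1/2}\big(\sum_j S_{ji}^2\big)^{1/2}=1$, since the rows of $\vR$ and the columns of $\vS$ are unit vectors, and symmetrically every column sum of $|\vT|$ is at most $1$. As $\sigma_i,\theta_j\ge0$, we get $\langle\vX,\vY\rangle\le\sum_{i,j}\sigma_i\,|T_{ij}|\,\theta_j$. A nonnegative doubly sub-stochastic matrix is dominated entrywise by a doubly stochastic one, which by Birkhoff's theorem is a convex combination $\sum_\pi\lambda_\pi\vP_\pi$ of permutation matrices; substituting and using the rearrangement inequality $\sum_i\sigma_i\theta_{\pi(i)}\le\sum_i\sigma_i\theta_i$ (valid because both sequences are sorted in decreasing order) yields $\langle\vX,\vY\rangle\le\sum_{i=1}^n\sigma_i\theta_i$. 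Undoing the padding, the zero tail contributes nothing, so the bound is exactly $\sum_{i=1}^{\min(s,t)}\sigma_i(\vX)\sigma_i(\vY)$, i.e.\ \eqref{eq:von-ineq}. For the equality claim, if $\vX$ and $\vY$ admit SVDs $\vX=\vU\bm{\Sigma}\vV^\top$, $\vY=\vU\bm{\Theta}\vV^\top$ with a common pair $\vU,\vV$, then after consistent padding $\vR=\vS=\vI$, hence $\vT=\vI$ and $\langle\vX,\vY\rangle=\sum_i\sigma_i\theta_i$, so the inequality is tight.

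The only ingredient beyond routine SVD bookkeeping is the passage from ``doubly sub-stochastic'' to ``convex combination of permutations.'' I would either cite the standard majorization fact that a doubly sub-stochastic matrix is dominated by a doubly stochastic matrix and then apply Birkhoff, or sidestep it entirely with a summation-by-parts argument: writing $\vY=\sum_j\theta_j\vp_j\vq_j^\top$ with $\theta_1\ge\theta_2\ge\cdots\ge0$, set $\gamma_k=\sum_{j=1}^k\vp_j^\top\vX\vq_j=\langle\vX,\vP_k\vQ_k^\top\rangle=\tr(\vP_k^\top\vX\vQ_k)$, where $\vP_k,\vQ_k$ collect the first $k$ singular vectors; since $\vP_k^\top\vX\vQ_k$ is a compression of $\vX$ its singular values are bounded by those of $\vX$, so $\gamma_k\le\sum_{i=1}^k\sigma_i(\vP_k^\top\vX\vQ_k)\le\sum_{i=1}^k\sigma_i(\vX)$ by Weyl's inequality $\tr(\vB)\le\sum_i\sigma_i(\vB)$. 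Then $\langle\vX,\vY\rangle=\sum_k(\theta_k-\theta_{k+1})\gamma_k\le\sum_k(\theta_k-\theta_{k+1})\sum_{i=1}^k\sigma_i(\vX)=\sum_i\sigma_i(\vX)\theta_i$ after telescoping, which needs only $\tr\le\sum\sigma_i$ and singular-value interlacing. I expect the main friction to be purely expository — choosing between these two routes and stating the equality case cleanly in the presence of repeated or zero singular values (where the SVD is non-unique) — rather than anything mathematically deep.
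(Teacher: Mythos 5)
The paper does not prove this lemma at all: it is imported as a classical result with a citation to Mirsky, and only its statement is used (in the proof of Lemma 3.3). So there is no in-paper argument to compare against; what matters is whether your self-contained proof is correct, and it is. Your first route is the standard textbook proof: pad to square matrices, write $\langle\vX,\vY\rangle=\sum_{i,j}\sigma_i(\vX)\,\sigma_j(\vY)\,T_{ij}$ with $\vT=(\vU^\top\vP)\circ(\vQ^\top\vV)^\top$, observe via Cauchy--Schwarz that $|\vT|$ is doubly substochastic, and finish with domination by a doubly stochastic matrix, Birkhoff, and the rearrangement inequality; the bookkeeping (padding preserves the inner product and the nonzero singular values, and the zero tail contributes nothing) is handled correctly. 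Your equality verification ($\vR=\vS=\vI$ when the SVDs share $\vU,\vV$, so $\vT=\vI$) matches exactly the "if" direction claimed in the lemma. The alternative Abel-summation route you sketch is also sound and is arguably preferable here, since it replaces the Birkhoff/majorization machinery with only $\tr(\vB)\le\sum_i\sigma_i(\vB)$ and the interlacing bound $\sigma_i(\vP_k^\top\vX\vQ_k)\le\sigma_i(\vX)$ for compressions. Either route is a complete and correct proof of the cited inequality.
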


We use the Trace Inequality \eqref{eq:von-ineq} to show the following result.

\begin{lemma}\label{lem:spec-a}
Let $\vX\in\RR^{s\times r}$ be a matrix with orthonormal columns, i.e., $\vX^\top\vX=\vI$. For any matrix $\vY\in\RR^{s\times t}$, let $\vY=\vU\bm{\Sigma}\vV^\top+\vU_\perp\bm{\Sigma}_\perp\vV_\perp^\top$ be its full SVD, where $\vU\in\RR^{s\times r}$ corresponds to the leading $r$ singular values. Then
\begin{equation}\label{eq:spec-a}
\|\vU^\top\vY\|_F^2-\|\vX^\top\vY\|_F^2\ge \frac{1-\sigma_{r+1}^2(\vY)/\sigma_r^2(\vY)}{1+\sigma_{r+1}^2(\vY)/\sigma_r^2(\vY)}\|\tilde{\vD}^\top\vU^\top\vY-\vX^\top\vY\|_F^2,
\end{equation}
where we use the convention $0/0=0$, and
\begin{equation}\label{eq:probd}\tilde{\vD}=\argmax_{\vD:\,\vD^\top\vD=\vI}\langle\tilde{\vD}^\top, \vX^\top\vU\bm{\Sigma}^2\rangle.
\end{equation}
If $\vX^\top\vU\bm{\Sigma}^2$ has full SVD $\tilde{\vU}\tilde{\bm{\Sigma}}\tilde{\vV}^\top$, we can take $\tilde{\vD}=\tilde{\vV}\tilde{\vU}^\top$.
\end{lemma}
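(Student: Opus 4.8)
The plan is to reduce the inequality to an index-wise scalar estimate. Write $\vS=\vY\vY^\top$ and $\vP=\vU^\top\vX\in\RR^{r\times r}$, and set $a=\|\vU^\top\vY\|_F^2$, $b=\|\vX^\top\vY\|_F^2$, $\tau=\sigma_{r+1}^2(\vY)/\sigma_r^2(\vY)\in[0,1]$. Since the columns of $\vU$ are eigenvectors of $\vS$ with eigenvalues the squared top singular values, $\vS\vU=\vU\bm{\Sigma}^2$, whence $a=\tr(\bm{\Sigma}^2)$. Expanding the square and using $\tilde{\vD}^\top\tilde{\vD}=\vI$,
\[
\|\tilde{\vD}^\top\vU^\top\vY-\vX^\top\vY\|_F^2=a+b-2\max_{\vD^\top\vD=\vI}\langle\vD^\top\vU^\top\vY,\vX^\top\vY\rangle ,
\]
and a short manipulation gives $\langle\vD^\top\vU^\top\vY,\vX^\top\vY\rangle=\tr(\vD\,\vX^\top\vS\vU)=\langle\vD^\top,\vX^\top\vU\bm{\Sigma}^2\rangle$. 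By Lemma \ref{lem:von-ineq} this is maximized, with value $m:=\|\vX^\top\vU\bm{\Sigma}^2\|_*=\|\bm{\Sigma}^2\vP\|_*$, when $\vD^\top$ and $\vX^\top\vU\bm{\Sigma}^2$ share singular vectors, i.e.\ at $\tilde{\vD}=\tilde{\vV}\tilde{\vU}^\top$; this already yields the last sentence of the statement. Substituting $\|\tilde{\vD}^\top\vU^\top\vY-\vX^\top\vY\|_F^2=a+b-2m$ and clearing the positive factor $1+\tau$, the claim becomes equivalent to
\[
\tau a+(1-\tau)m\ \ge\ b .
\]

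Next I would bound $b$ from above and $m$ from below. From the full SVD of $\vY$, $[\vU\;\vU_\perp]$ is orthogonal and $\vS=\vU\bm{\Sigma}^2\vU^\top+\vU_\perp\bm{\Lambda}\vU_\perp^\top$ with $\bm{\Lambda}\succeq0$ diagonal and $\bm{\Lambda}\preceq\sigma_{r+1}^2(\vY)\,\vI$. Putting $\vQ=\vU_\perp^\top\vX$, one has $\vP^\top\vP+\vQ^\top\vQ=\vX^\top\vX=\vI$, so $\tr(\vQ^\top\vQ)=r-\|\vP\|_F^2$ and
\[
b=\tr(\vP^\top\bm{\Sigma}^2\vP)+\tr(\vQ^\top\bm{\Lambda}\vQ)\ \le\ \tr(\vP^\top\bm{\Sigma}^2\vP)+\sigma_{r+1}^2(\vY)\big(r-\|\vP\|_F^2\big).
\]
For $m$, take an SVD $\vP=\vR_1\bm{\Pi}\vR_2^\top$ with $\bm{\Pi}=\Diag(p_1,\dots,p_r)$ and $p_i\in[0,1]$ (as $\vP^\top\vP\preceq\vI$), and test the variational identity $\|\vN\|_*=\max_{\vZ^\top\vZ=\vI}\langle\vZ,\vN\rangle$ (valid for square $\vN$, a consequence of Lemma \ref{lem:von-ineq}) at $\vZ=\vR_1\vR_2^\top$: with $\vM:=\vR_1^\top\bm{\Sigma}^2\vR_1$,
\[
m=\|\bm{\Sigma}^2\vP\|_*\ \ge\ \langle\vR_1\vR_2^\top,\bm{\Sigma}^2\vP\rangle=\tr(\vM\bm{\Pi})=\sum_{i=1}^r M_{ii}p_i .
\]
I would also record $\tr(\vP^\top\bm{\Sigma}^2\vP)=\tr(\vM\bm{\Pi}^2)=\sum_i M_{ii}p_i^2$, $a=\tr(\vM)=\sum_i M_{ii}$, $\|\vP\|_F^2=\sum_i p_i^2$, $\sigma_{r+1}^2(\vY)=\tau\sigma_r^2(\vY)$, and—crucially—$M_{ii}\in[\sigma_r^2(\vY),\sigma_1^2(\vY)]$ since $\vM$ is symmetric with these extreme eigenvalues.

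Combining the two displays, $\tau a+(1-\tau)m-b$ is bounded below by $\sum_{i=1}^r g_i$ with $g_i=M_{ii}\big(\tau+(1-\tau)p_i-p_i^2\big)-\tau\sigma_r^2(\vY)\big(1-p_i^2\big)$. Using the factorizations $\tau+(1-\tau)p-p^2=(1-p)(p+\tau)$ and $1-p^2=(1-p)(1+p)$,
\[
g_i=(1-p_i)\big[M_{ii}(p_i+\tau)-\tau\sigma_r^2(\vY)(1+p_i)\big]\ \ge\ (1-p_i)\sigma_r^2(\vY)\big[(p_i+\tau)-\tau(1+p_i)\big]=(1-p_i)(1-\tau)\,p_i\,\sigma_r^2(\vY)\ \ge\ 0,
\]
the first inequality because $M_{ii}\ge\sigma_r^2(\vY)\ge0$ and $1-p_i\ge0$. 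Summing over $i$ yields $\tau a+(1-\tau)m\ge b$, i.e.\ the reduced inequality, and hence the lemma. The degenerate case $\sigma_r(\vY)=0$—which forces $\sigma_{r+1}(\vY)=0$ as well and is what the convention $0/0=0$ handles—is the $\tau=0$ instance, where the same bounds go through using only $M_{ii}\ge0$.

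The step I expect to be the main obstacle is choosing these relaxations tightly enough. The ``obvious'' bounds $\tr(\vP^\top\bm{\Sigma}^2\vP)\le\sum_i\sigma_i^2(\vY)p_i^2$ and $m\ge\sigma_r^2(\vY)\|\vP\|_*$, applied separately, lose too much: the resulting $i$-th term turns negative as $p_i\to1$, i.e.\ when a principal direction of $\mathrm{range}(\vX)$ aligns with one of $\vU$'s—precisely the regime in which the left-hand side of the lemma is itself small (it vanishes when $\vP$ is orthogonal). The resolution is to route both $\tr(\vP^\top\bm{\Sigma}^2\vP)$ and the lower bound for $m$ through the \emph{same} diagonal entries $M_{ii}$ of $\vM=\vR_1^\top\bm{\Sigma}^2\vR_1$—legitimate because $\tr\vM=\tr\bm{\Sigma}^2$ and $M_{ii}\ge\sigma_r^2(\vY)$—after which the estimate collapses into the nonnegative termwise form above.
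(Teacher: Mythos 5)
Your proof is correct, and it takes a recognizably different route from the paper's, even though both hinge on von Neumann's trace inequality (Lemma \ref{lem:von-ineq}) to identify $\tilde{\vD}$ and to express $\max_{\vD}\langle\vD^\top,\vX^\top\vU\bm{\Sigma}^2\rangle$ as the nuclear norm $m=\|\vX^\top\vU\bm{\Sigma}^2\|_*$. The paper works directly with the matrix expression $\|\vU^\top\vY\|_F^2-\|\vX^\top\vY\|_F^2-\alpha\|\tilde{\vD}^\top\vU^\top\vY-\vX^\top\vY\|_F^2$: it splits $\|\vX^\top\vY\|_F^2$ into its $\vU$- and $\vU_\perp$-components, invokes the bound $\|\vX^\top\vU\bm{\Sigma}\|_F^2=\langle\vX^\top\vU,\vX^\top\vU\bm{\Sigma}^2\rangle\le\langle\tilde{\vD},\vX^\top\vU\bm{\Sigma}^2\rangle$ (a consequence of $\sigma_{\max}(\vX^\top\vU)\le1$), and closes with a second application of von Neumann to $\langle(\vU,\vU_\perp)^\top\vX\vX^\top(\vU,\vU_\perp),\diag((1-\alpha)\bm{\Sigma}^2,(1+\alpha)\bm{\Sigma}_\perp\bm{\Sigma}_\perp^\top)\rangle$, where the identity $(1+\alpha)\sigma_{r+1}^2=(1-\alpha)\sigma_r^2$ guarantees the top $r$ singular values of the block-diagonal factor are the $(1-\alpha)\sigma_i^2(\vY)$. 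You instead reduce the whole claim to the scalar inequality $\tau a+(1-\tau)m\ge b$ and then diagonalize $\vP=\vU^\top\vX$, routing $a$, $b$, and $m$ through the common quantities $M_{ii}$ and $p_i$ so that the inequality collapses to the termwise factorization $(1-p_i)[M_{ii}(p_i+\tau)-\tau\sigma_r^2(1+p_i)]\ge0$. In your notation the paper's inequality \eqref{eq:d} reads $m\ge\sum_iM_{ii}p_i^2$, whereas your test matrix $\vR_1\vR_2^\top$ yields the sharper $m\ge\sum_iM_{ii}p_i$, which is exactly what makes the termwise argument close; the paper avoids needing this sharper bound by doing the final cancellation globally with a second von Neumann step. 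The paper's version is shorter once the right block decomposition is seen; yours is more mechanical and makes visible where the spectral-gap factor enters (only through $M_{ii}\ge\sigma_r^2(\vY)$ and the identity $(p_i+\tau)-\tau(1+p_i)=(1-\tau)p_i$), and it handles the degenerate case $\sigma_r(\vY)=0$ transparently.
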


\begin{proof}
Let $$\alpha=\frac{1-\sigma_{r+1}^2(\vY)/\sigma_r^2(\vY)}{1+\sigma_{r+1}^2(\vY)/\sigma_r^2(\vY)}.$$
Then $(1+\alpha)\sigma_{r+1}^2(\vY)=(1-\alpha)\sigma_r^2(\vY)$. From Lemma \ref{lem:von-ineq}, it follows that $\tilde{\vD}=\tilde{\vV}\tilde{\vU}^\top$ solves the maximization problem in \eqref{eq:probd} if the full SVD of $\vX^\top\vU\bm{\Sigma}^2$ is $\tilde{\vU}\tilde{\bm{\Sigma}}\tilde{\vV}^\top$. In addition, since $\sigma_{\max}(\vX^\top\vU)\le 1$,
\begin{equation}\label{eq:d}
\|\vX^\top\vU\bm{\Sigma}\|_F^2=\langle\vX^\top\vU,\vX^\top\vU\bm{\Sigma}^2\rangle \le \sum_{i=1}^r\sigma_i(\vX^\top\vU\bm{\Sigma}^2)
=\langle\tilde{\vD},\vX^\top\vU\bm{\Sigma}^2\rangle.
\end{equation} 
We have
\begin{align*}
&\,\|\vU^\top\vY\|_F^2-\|\vX^\top\vY\|_F^2 - \alpha\|\tilde{\vD}^\top\vU^\top\vY-\vX^\top\vY\|_F^2\\
=\,&(1-\alpha)\sum_{i=1}^r\sigma_i^2(\vY)+2\alpha\langle\tilde{\vD}^\top
\bm{\Sigma},\vX^\top\vU\bm{\Sigma}\rangle-(1+\alpha)\|\vX^\top\vY\|_F^2\\
=\,&(1-\alpha)\sum_{i=1}^r\sigma_i^2(\vY)+2\alpha\langle\tilde{\vD}^\top
\bm{\Sigma},\vX^\top\vU\bm{\Sigma}\rangle-(1+\alpha)\|\vX^\top\vU\bm{\Sigma}\|_F^2-(1+\alpha)\|\vX^\top\vU_\perp\bm{\Sigma}_\perp\|_F^2\\
\ge\,& (1-\alpha)\sum_{i=1}^r\sigma_i^2(\vY)-(1-\alpha)\|\vX^\top\vU\bm{\Sigma}\|_F^2-(1+\alpha)\|\vX^\top\vU_\perp\bm{\Sigma}_\perp\|_F^2\\
=\,& (1-\alpha)\sum_{i=1}^r\sigma_i^2(\vY)-\left\langle (\vU^\top;\vU_\perp^\top)\vX\vX^\top(\vU,\vU_\perp), \diag\big((1-\alpha)\bm{\Sigma}\bm{\Sigma}^\top,(1+\alpha)\bm{\Sigma}_\perp\bm{\Sigma}_\perp^\top\big)\right\rangle
\ge &\,0,
\end{align*}
where the second equality uses $\vU^\top\vU_\perp=\vzero$, the first inequality follows from \eqref{eq:d}, and the last inequality uses Lemma \ref{lem:von-ineq} and $\sigma_i(\vX\vX^\top)=0,\,\forall i>r$. This completes the proof.
\hfill$\blacksquare$\end{proof}

Directly from Lemma \ref{lem:spec-a}, we have the following result.
\begin{lemma}\label{lem:dec-spec}
Let $\{(\vA^k,\bm{\cX}^k)\}_{k=1}^\infty$ be the sequence generated by Algorithm \ref{alg:ihooi}. There exist orthogonal matrices $\{\tilde{\vD}_n^k:\,n=1,\ldots,N\}_{k=1}^\infty$ such that
$$\big\|(\vA_n^{k+1})^\top\vG_n^k\big\|_F^2-\big\|(\vA_n^k)^\top\vG_n^k\big\|_F^2\ge \frac{1-\sigma_{r_n+1}^2(\vG_n^k)/\sigma_{r_n}^2(\vG_n^k)}{1+\sigma_{r_n+1}^2(\vG_n^k)/\sigma_{r_n}^2(\vG_n^k)}\big\|(\vA_n^{k+1}\tilde{\vD}_n^{k+1})^\top\vG_n^k-(\vA_n^k)^\top\vG_n^k\big\|_F^2,\,\forall n, k.$$
\end{lemma}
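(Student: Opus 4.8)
The plan is to read off Lemma~\ref{lem:dec-spec} as an immediate corollary of Lemma~\ref{lem:spec-a}, applied mode by mode and iteration by iteration. Fix $n$ and $k$. In the notation of Lemma~\ref{lem:spec-a}, take $\vY=\vG_n^k\in\RR^{m_n\times\prod_{i\neq n}r_i}$ with the number of leading singular values set to $r=r_n$, and write a full SVD $\vG_n^k=\vU\bm{\Sigma}\vV^\top+\vU_\perp\bm{\Sigma}_\perp\vV_\perp^\top$ with $\bm{\Sigma}=\diag\big(\sigma_1(\vG_n^k),\ldots,\sigma_{r_n}(\vG_n^k)\big)$. By the very definition of $\vA_n^{k+1}$ in line~4 of Algorithm~\ref{alg:ihooi} (the $r_n$ leading left singular vectors of $\vG_n^k$), we may take $\vU=\vA_n^{k+1}$. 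Take $\vX=\vA_n^k$; this is a legitimate choice because the iterates of Algorithm~\ref{alg:ihooi} stay feasible, i.e. $(\vA_n^k)^\top\vA_n^k=\vI$. Under these identifications $\|\vU^\top\vY\|_F^2=\|(\vA_n^{k+1})^\top\vG_n^k\|_F^2$, $\|\vX^\top\vY\|_F^2=\|(\vA_n^k)^\top\vG_n^k\|_F^2$, and the coefficient $\alpha$ of Lemma~\ref{lem:spec-a} is exactly $\big(1-\sigma_{r_n+1}^2(\vG_n^k)/\sigma_{r_n}^2(\vG_n^k)\big)\big/\big(1+\sigma_{r_n+1}^2(\vG_n^k)/\sigma_{r_n}^2(\vG_n^k)\big)$.

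Next I would let $\tilde{\vD}_n^{k+1}$ be the orthogonal matrix furnished by Lemma~\ref{lem:spec-a}, namely a maximizer of $\langle\vD^\top,(\vA_n^k)^\top\vA_n^{k+1}\bm{\Sigma}^2\rangle$ over $\vD^\top\vD=\vI$ (concretely, $\tilde{\vD}_n^{k+1}=\tilde{\vV}\tilde{\vU}^\top$ where $\tilde{\vU}\tilde{\bm{\Sigma}}\tilde{\vV}^\top$ is a full SVD of $(\vA_n^k)^\top\vA_n^{k+1}\bm{\Sigma}^2$). Substituting into \eqref{eq:spec-a} gives precisely
$$\big\|(\vA_n^{k+1})^\top\vG_n^k\big\|_F^2-\big\|(\vA_n^k)^\top\vG_n^k\big\|_F^2\ge\alpha\,\big\|(\vA_n^{k+1}\tilde{\vD}_n^{k+1})^\top\vG_n^k-(\vA_n^k)^\top\vG_n^k\big\|_F^2.$$
Running this over all $n=1,\ldots,N$ and all $k$ produces the claimed family $\{\tilde{\vD}_n^k:n=1,\ldots,N\}_{k=1}^\infty$ and completes the argument.

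I do not expect any genuine obstacle here: the only things worth checking are the feasibility of the iterates (immediate, since each $\vA_n^{k+1}$ is a matrix of orthonormal singular vectors) and the degenerate spectral configurations. When $\sigma_{r_n}(\vG_n^k)=\sigma_{r_n+1}(\vG_n^k)$, including the case where both vanish (so $\vG_n^k$ has rank less than $r_n$), the convention $0/0=0$ together with $\sigma_{r_n+1}/\sigma_{r_n}\le 1$ forces $\alpha=0$, so the right-hand side is zero while the left-hand side is nonnegative because $\vA_n^{k+1}$ maximizes $\|\vA_n^\top\vG_n^k\|_F^2$ over $\cO_n$; this is exactly the bookkeeping already built into Lemma~\ref{lem:spec-a}, so nothing extra is needed.
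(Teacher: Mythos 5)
Your proposal is correct and is essentially the paper's own argument: the paper states Lemma \ref{lem:dec-spec} as following ``directly from Lemma \ref{lem:spec-a}'' with exactly the identifications you make ($\vY=\vG_n^k$, $\vX=\vA_n^k$, $\vU=\vA_n^{k+1}$, $r=r_n$, and $\tilde{\vD}_n^{k+1}$ the maximizer from \eqref{eq:probd}). Your added bookkeeping on feasibility of the iterates and the degenerate case $\sigma_{r_n}=\sigma_{r_n+1}$ is accurate and consistent with the convention $0/0=0$ built into Lemma \ref{lem:spec-a}.
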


Now we are ready to state and show the convergence result of Algorithm \ref{alg:ihooi}.
\begin{theorem}[Global convergence of Algorithm \ref{alg:ihooi}]\label{thm:convg2}
Let $\{(\vA^k,\bm{\cX}^k)\}_{k=1}^\infty$ be the sequence generated from Algorithm \ref{alg:ihooi}. If
\begin{equation}\label{eq:gap-svd}\limsup_{k\to\infty}\frac{\sigma_{r_n+1}(\vG_n^k)}{\sigma_{r_n}(\vG_n^k)}<1,\,\forall n, 
\end{equation}
then for any finite limit point $(\bar{\vA},\bar{\bm{\cX}})$ of $\{(\vA^k,\bm{\cX}^k)\}_{k=1}^\infty$, there exist multipliers $\bar{\bm{\Lambda}}$ and $\bar{\bm{\cY}}$ such that $(\bar{\vA},\bar{\bm{\cX}},\bar{\bm{\Lambda}},\bar{\bm{\cY}})$ satisfies the KKT conditions in \eqref{eq:kkt3}. Furthermore, if $\{\cP_{\Omega^c}(\bm{\cX}^k)\}_{k=1}^\infty$ is bounded, then there exist multiplier sequences $\{\bm{\Lambda}^k\}_{k=1}^\infty$ and $\{\bm{\cY}^k\}_{k=1}^\infty$ such that
\begin{equation}\label{eq:kkt3-lim}
\lim_{k\to\infty}\nabla\cL_g(\vA^k,\bm{\cX}^k,\bm{\Lambda}^k,\bm{\cY}^k)=\vzero.
\end{equation}
\end{theorem}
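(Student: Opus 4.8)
The plan is to argue first along a convergent subsequence and then, for the second claim, upgrade to the whole sequence. Fix a finite limit point $(\bar{\vA},\bar{\bm{\cX}})$ and a subsequence $K$ with $(\vA^k,\bm{\cX}^k)\to(\bar{\vA},\bar{\bm{\cX}})$ along $K$. Since the Stiefel manifolds $\cO_n$ and the orthogonal groups are compact and $\bm{\cX}^k-\bm{\cX}^{k+1}\to\vzero$ by Lemma~\ref{lem:cvg2-dec}, I pass to a further subsequence (still called $K$) so that in addition $\vA_n^{k+1}\to\hat{\vA}_n$ and $\tilde{\vD}_n^{k+1}\to\bar{\vD}_n$ (orthogonal; the $\tilde{\vD}_n^{k+1}$ being the matrices furnished by Lemma~\ref{lem:dec-spec}) for every $n$, and $\bm{\cX}^{k+1}\to\bar{\bm{\cX}}$. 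The feasibility conditions \eqref{eq:kkt3-fea-a} and \eqref{eq:kkt3-fea-x} hold at every iterate, hence in the limit, so only the stationarity conditions \eqref{eq:kkt3-a}--\eqref{eq:kkt3-x} are at issue.

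The crux, and what I expect to be the main obstacle, is the identity $\hat{\vA}_n\hat{\vA}_n^\top=\bar{\vA}_n\bar{\vA}_n^\top$ for all $n$: the limit of the ``new'' factors and the limit of the ``old'' factors span the same subspace. Lemma~\ref{lem:cvg2-dec} alone controls only $\|(\vA_n^{k+1})^\top\vG_n^k\|_F^2-\|(\vA_n^k)^\top\vG_n^k\|_F^2$, which does not pin down the sequence $\{\vA_n^k\}$; combining it with Lemma~\ref{lem:dec-spec} and with the gap assumption \eqref{eq:gap-svd} (which keeps the prefactor in that lemma bounded below by a positive constant for large $k$) upgrades this to $\|(\vA_n^{k+1}\tilde{\vD}_n^{k+1})^\top\vG_n^k-(\vA_n^k)^\top\vG_n^k\|_F\to 0$. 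Passing to the limit along $K$, with $\bar{\vG}_n:=\unfold_n\big(\bar{\bm{\cX}}\times_{i=1}^{n-1}\hat{\vA}_i^\top\times_{i=n+1}^N\bar{\vA}_i^\top\big)$, gives $(\hat{\vA}_n\bar{\vD}_n)^\top\bar{\vG}_n=\bar{\vA}_n^\top\bar{\vG}_n$. Under \eqref{eq:gap-svd}, $\bar{\vG}_n$ has a spectral gap $\sigma_{r_n}(\bar{\vG}_n)>\sigma_{r_n+1}(\bar{\vG}_n)$ (the degenerate case $\sigma_{r_n}(\bar{\vG}_n)=0$ treated separately), so its leading $r_n$-dimensional left singular subspace depends continuously on $\bar{\vG}_n$; since $\vA_n^{k+1}$ spans this subspace for $\vG_n^k$, its limit $\hat{\vA}_n$ spans it for $\bar{\vG}_n$. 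Expanding $\bar{\vG}_n$ in its SVD and post-multiplying the displayed identity by the right singular vectors belonging to the top $r_n$ singular values yields $\bar{\vA}_n^\top\hat{\vA}_n=\bar{\vD}_n^\top$, an orthogonal matrix. Then $\|\bar{\vA}_n^\top\hat{\vA}_n\vx\|=\|\vx\|=\|\hat{\vA}_n\vx\|$ for all $\vx$, while $\|\bar{\vA}_n^\top\vy\|\le\|\vy\|$ with equality only on $\mathrm{range}(\bar{\vA}_n)$; hence $\mathrm{range}(\hat{\vA}_n)=\mathrm{range}(\bar{\vA}_n)$, i.e. $\hat{\vA}_n\hat{\vA}_n^\top=\bar{\vA}_n\bar{\vA}_n^\top$.

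Granting the subspace identity, the stationarity conditions follow. Writing $\hat{\vA}_i=\bar{\vA}_i\vQ_i$ with $\vQ_i$ orthogonal and using \eqref{eq:ttm} and \eqref{eq:mat}, one gets $\bar{\vG}_n=\tilde{\vG}_n\vR$ with $\tilde{\vG}_n:=\unfold_n\big(\bar{\bm{\cX}}\times_{i\neq n}\bar{\vA}_i^\top\big)$ and $\vR$ orthogonal (a Kronecker product of the $\vQ_i^\top$ and identities), so $\bar{\vG}_n$ and $\tilde{\vG}_n$ have the same left singular vectors; hence $\bar{\vA}_n$ spans the leading $r_n$-eigenspace of $\tilde{\vG}_n\tilde{\vG}_n^\top$, which is exactly \eqref{eq:kkt3-a} with $\bar{\bm{\Lambda}}_n:=\bar{\vA}_n^\top\tilde{\vG}_n\tilde{\vG}_n^\top\bar{\vA}_n$. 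For \eqref{eq:kkt3-x}, passing the update \eqref{eq:ihooi-x} to the limit along $K$ and using $\vA_i^{k+1}(\vA_i^{k+1})^\top\to\bar{\vA}_i\bar{\vA}_i^\top$ gives $\cP_{\Omega^c}(\bar{\bm{\cX}})=\cP_{\Omega^c}\big(\bar{\bm{\cX}}\times_{i=1}^N\bar{\vA}_i\bar{\vA}_i^\top\big)$; defining $\bar{\bm{\cY}}$ by $\cP_{\Omega^c}(\bar{\bm{\cY}})=\vzero$ and $\cP_\Omega(\bar{\bm{\cY}})=\cP_\Omega\big(\bar{\bm{\cX}}\times_{i=1}^N\bar{\vA}_i\bar{\vA}_i^\top-\bar{\bm{\cX}}\big)$ then verifies \eqref{eq:kkt3-x}. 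This proves the first assertion.

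For the last assertion I run the same mechanism along the whole sequence. Boundedness of $\{\cP_{\Omega^c}(\bm{\cX}^k)\}$ together with $\cP_\Omega(\bm{\cX}^k)=\cP_\Omega(\bm{\cM})$ makes $\{\vG_n^k\}$ bounded, and the computation behind Lemma~\ref{lem:dec-spec} --- splitting $(\vA_n^{k+1}\tilde{\vD}_n^{k+1}-\vA_n^k)^\top\vG_n^k\to\vzero$ along the top and trailing right singular directions of $\vG_n^k$ and using \eqref{eq:gap-svd} (which also keeps $\sigma_{r_n}(\vG_n^k)$ bounded away from $0$) --- yields $\vA_n^{k+1}(\vA_n^{k+1})^\top-\vA_n^k(\vA_n^k)^\top\to\vzero$ for every $n$. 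Let $\hat{\vG}_n^k:=\unfold_n\big(\bm{\cX}^k\times_{i\neq n}(\vA_i^k)^\top\big)$ be the matrix occurring in \eqref{eq:kkt3-a} at iterate $k$; the Kronecker-product form of the unfolding shows $\hat{\vG}_n^k(\hat{\vG}_n^k)^\top-\vG_n^k(\vG_n^k)^\top\to\vzero$ (they differ only through $\vA_i^{k+1}(\vA_i^{k+1})^\top$ versus $\vA_i^k(\vA_i^k)^\top$ for $i<n$), and since $\vG_n^k(\vG_n^k)^\top\vA_n^{k+1}=\vA_n^{k+1}(\bm{\Sigma}_n^k)^2$ exactly (with $\bm{\Sigma}_n^k$ the diagonal matrix of the $r_n$ largest singular values of $\vG_n^k$) while $\vA_n^k(\vA_n^k)^\top-\vA_n^{k+1}(\vA_n^{k+1})^\top\to\vzero$, the choice $\bm{\Lambda}_n^k:=(\vA_n^k)^\top\hat{\vG}_n^k(\hat{\vG}_n^k)^\top\vA_n^k$ drives the $\vA_n$-residual $(\vI-\vA_n^k(\vA_n^k)^\top)\hat{\vG}_n^k(\hat{\vG}_n^k)^\top\vA_n^k$ to $\vzero$. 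Finally $\cP_{\Omega^c}(\bm{\cY}^k)=\vzero$, $\cP_\Omega(\bm{\cY}^k)=\cP_\Omega\big(\bm{\cX}^k\times_{i=1}^N\vA_i^k(\vA_i^k)^\top-\bm{\cX}^k\big)$ kills the $\Omega$-part of the $\bm{\cX}$-residual, while on $\Omega^c$ that residual equals $\cP_{\Omega^c}\big(\bm{\cX}^k-\bm{\cX}^k\times_{i=1}^N\vA_i^k(\vA_i^k)^\top\big)$, which goes to $\vzero$ by \eqref{eq:ihooi-x}, $\bm{\cX}^k-\bm{\cX}^{k+1}\to\vzero$, and $\vA_i^{k+1}(\vA_i^{k+1})^\top-\vA_i^k(\vA_i^k)^\top\to\vzero$; this gives \eqref{eq:kkt3-lim}. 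The genuinely delicate points are the two extraction steps, where projector convergence must be recovered from a $\vG_n^k$-weighted difference --- precisely where \eqref{eq:gap-svd} is indispensable --- and the bookkeeping reconciling the ``mixed-iteration'' $\vG_n^k$ of the algorithm with the ``single-point'' $\vG_n$ of the KKT system.
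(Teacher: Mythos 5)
Your skeleton matches the paper's: Lemma \ref{lem:cvg2-dec} for the telescoping decrease, Lemma \ref{lem:dec-spec} plus \eqref{eq:gap-svd} to upgrade it to $\|(\vA_n^{k+1}\tilde{\vD}_n^{k+1})^\top\vG_n^k-(\vA_n^k)^\top\vG_n^k\|_F\to 0$, then KKT at limit points and, under boundedness, along the whole sequence. The execution of the middle step, however, contains a genuine gap. Both your subsequence argument (post-multiplying by the top right singular vectors and inverting $\bm{\Sigma}$) and your whole-sequence argument hinge on converting this $\vG_n^k$-weighted difference into \emph{projector} convergence $\vA_n^{k+1}(\vA_n^{k+1})^\top-\vA_n^k(\vA_n^k)^\top\to\vzero$, and for that you invoke the claim that \eqref{eq:gap-svd} ``keeps $\sigma_{r_n}(\vG_n^k)$ bounded away from $0$.'' That claim is false: \eqref{eq:gap-svd} controls only the ratio $\sigma_{r_n+1}(\vG_n^k)/\sigma_{r_n}(\vG_n^k)$, and with the convention $0/0=0$ both singular values may tend to zero (or vanish) while the hypothesis holds. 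Without a lower bound on $\sigma_{r_n}(\vG_n^k)$ you cannot divide out $\vG_n^k$, the projector convergence does not follow, and the ``degenerate case $\sigma_{r_n}(\bar{\vG}_n)=0$ treated separately'' in your first part is not a removable corner case --- it is precisely the situation your route, as described, does not handle.

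The paper sidesteps this by never de-weighting: it works throughout with $\vY_{\tilde{h}_n^k}=(\tilde{\vA}_n^k)^\top\tilde{\vH}_n^k$ and $\vY_{\tilde{g}_n^{k-1}}$, passes to $\vG_n^{k-1}(\vG_n^{k-1})^\top\vA_n^k-\vH_n^k(\vH_n^k)^\top\vA_n^k\to\vzero$, and exploits the fact that the KKT residual \eqref{eq:kkt3-a} is itself a $\vG_n$-weighted quantity, so a vanishing $\sigma_{r_n}$ only shrinks the residual rather than breaking the argument; boundedness of the iterates is all that is needed to run the same chain over the whole sequence. To repair your proof you would either need to add the hypothesis $\liminf_{k}\sigma_{r_n}(\vG_n^k)>0$ for all $n$, or restructure the limit-taking so that every extracted statement stays multiplied by $\vG_n^k$ or $\vG_n^k(\vG_n^k)^\top$, as the paper does. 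The remaining bookkeeping in your proposal --- the invariance of $\vG_n\vG_n^\top$ under orthogonal rotations of the other factors, the exact identity $\vG_n^k(\vG_n^k)^\top\vA_n^{k+1}=\vA_n^{k+1}(\bm{\Sigma}_n^k)^2$, the choice of multipliers, and the treatment of the $\bm{\cX}$-residual --- is sound.
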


\begin{remark}
The condition in \eqref{eq:gap-svd} is similar to the one assumed by the orthogonal iteration method \cite[section 7.3.2]{GolubVanLoan1996} for computing $r$-dimensional dominant invariant subspace of a matrix $\vX$. Typically, the convergence of the orthogonal iteration method requires that there is a gap between the $r$-th and $(r+1)$-th largest eigenvalues of $\vX$ in magnitude, because otherwise, the $r$-dimensional dominant invariant subspace of $\vX$ is not unique. Similarly, if $\sigma_{r_n}(\vG_n^k)=\sigma_{r_{n}+1}(\vG_n^k)$, then the left $r_n$-dimensional dominant singular vector space of $\vG_n^k$ is not uniquely determined, and the decomposition can oscillate (i.e., \eqref{eq:cvg2-lim-d} may not hold) in the case that $\sigma_{r_n}(\vG_n^k)=\sigma_{r_{n}+1}(\vG_n^k)$ holds for infinite number of iteration $k$'s and some mode $n$. 

The drawback of Theorem \ref{thm:convg2} is that the assumption \eqref{eq:gap-svd} depends on the iterates. For the purpose of reconstructing a low-multilinear-rank tensor, if $\bm{\cX}^k$ converges to a rank-$(r_1,\ldots,r_N)$ tensor, then \eqref{eq:gap-svd} automatically holds. However, in general, it is unclear how to remove or weaken the assumption.
From the proof below, we see that all results in the theorem can be obtained if \eqref{eq:cvg2-lim-d} holds, which is indicated by \eqref{eq:gap-svd}. In addition, the theorem implies that the sequence produced by Algorithm \ref{alg:ihooi} cannot converge to a non-critical point because if the sequence converges, then \eqref{eq:cvg2-lim-d} holds and thus the convergence results follow.
\end{remark}

\begin{proof}
Let $$\bm{\cY}^k=\cP_\Omega\left(\bm{\cX}^k\times_{i=1}^N\big(\vA_i^k(\vA_i^k)^\top\big)\right)
-\cP_\Omega(\bm{\cM})$$
and $\bm{\Lambda}^k=(\bm{\Lambda}^k_1,\ldots,\bm{\Lambda}^k_N)$ with $\bm{\Lambda}_n^k$'s being symmetric matrices such that
\begin{equation}\label{eq:lam}\vG_n^{k-1}(\vG_n^{k-1})^\top\vA_n^k-\vA_n^k\bm{\Lambda}_n^k
=\vzero,\,n=1,\ldots,N.
\end{equation}
From \cite[Proposition 3.1.1]{Bertsekas-NLP}, the existence of $\bm{\Lambda}^k$ is guaranteed since $\vA_n^k$ maximizes $\|\vA_n^\top\vG_n^{k-1}\|_F^2$ over $\cO_n$. It follows from the $\bm{\cX}$-update \eqref{eq:ihooi-x} and \eqref{eq:cvg2-lim-x} that
\begin{equation}\label{eq:cvg2-lim-y}
\lim_{k\to\infty}\bm{\cX}^k-\bm{\cX}^k\times_{i=1}^N\big(\vA_i^k(\vA_i^k)^\top\big)+\bm{\cY}^k=\vzero.
\end{equation}
 
Let $\{\tilde{\vD}_n^k:\, n=1,\ldots,N\}_{k=1}^\infty$ be the matrices specified in Lemma \ref{lem:dec-spec}. Under the condition in \eqref{eq:gap-svd}, we have from \eqref{eq:cvg2-lim-a} and Lemma \ref{lem:dec-spec} that
\begin{equation}\label{eq:cvg2-lim-d}
\lim_{k\to\infty}(\vA_n^k\tilde{\vD}_n^k)^\top\vG_n^{k-1}-(\vA_n^{k-1})^\top\vG_n^{k-1}=\vzero.
\end{equation}
For $n=1,\ldots,N$, let $\tilde{\vA}_n^k=\vA_n^k\tilde{\vD}_n^k$ and
$$\tilde{\vG}_n^k=\unfold_n(\bm{\cX}^k\times_{i=1}^{n-1}(\tilde{\vA}_i^{k+1})^\top\times_{i=n+1}^N\vA_i^k).$$
Then \eqref{eq:cvg2-lim-d} implies
\begin{equation}\label{eq:main3-dec}\lim_{k\to\infty}\bm{\cX}^{k-1}\times_{i=1}^{n-1}(\tilde{\vA}_i^k)^\top\times_{i=n}^N(\vA_i^{k-1})^\top-
\bm{\cX}^{k-1}\times_{i=1}^n(\tilde{\vA}_i^k)^\top\times_{i=n+1}^N(\vA_i^{k-1})^\top
=\vzero,\, \forall n. \end{equation}
Let 
\begin{align*}
&\vH_n^k=\unfold_n(\bm{\cX}^k\times_{i=1}^{n-1}(\vA_i^k)^\top\times_{i=n+1}^N(\vA_i^k)^\top),\\[0.1cm] &\tilde{\vH}_n^k=\unfold_n(\bm{\cX}^k\times_{i=1}^{n-1}(\tilde{\vA}_i^k)^\top\times_{i=n+1}^N(\tilde{\vA}_i^k)^\top).
\end{align*}
Then it follows from \eqref{eq:cvg2-lim-x} and \eqref{eq:main3-dec} that
\begin{equation}\label{eq:cvg2-temp2}\lim_{k\to\infty}(\tilde{\vA}_n^k)^\top\tilde{\vH}_n^k-
(\tilde{\vA}_n^k)^\top\tilde{\vG}_n^{k-1}=\vzero,\,\forall n.
\end{equation}
From the basic theorem of linear algebra, one can write
$$\tilde{\vH}_n^k=\tilde{\vA}_n^k\vY_{\tilde{h}_n^k}+\vZ_{\tilde{h}_n^k},\qquad \tilde{\vG}_n^{k-1}=\tilde{\vA}_n^k\vY_{\tilde{g}_n^{k-1}}+\vZ_{\tilde{g}_n^{k-1}},$$
where the columns of $\vZ_{\tilde{h}_n^k}$ and $\vZ_{\tilde{g}_n^{k-1}}$ belong to the null space of $(\tilde{\vA}_n^k)^\top$. Then \eqref{eq:cvg2-temp2} becomes
\begin{equation}\label{eq:cvg2-temp3}\lim_{k\to\infty}\vY_{\tilde{h}_n^k}-\vY_{\tilde{g}_n^{k-1}}=\vzero,\,\forall n.
\end{equation}

For any finte limit point $(\bar{\vA},\bar{\bm{\cX}})$ of $\{(\vA^k,\bm{\cX}^k)\}_{k=1}^\infty$, there exists a subsequence $\{(\vA^k,\bm{\cX}^k)\}_{k\in\cK}$ converging to $(\bar{\vA},\bar{\bm{\cX}})$, and $\{\tilde{\vG}_n^{k-1}\}_{k\in\cK}$ and $\{\tilde{\vH}_n^k\}_{k\in\cK}$ are bounded. We have from \eqref{eq:cvg2-temp2} that
$$\lim_{\cK\ni k\to\infty}\vY_{\tilde{h}_n^k}\vY_{\tilde{h}_n^k}^\top-
\vY_{\tilde{g}_n^{k-1}}\vY_{\tilde{g}_n^{k-1}}^\top=\vzero,\,\forall n$$
which implies
\begin{equation}\label{eq:cvg2-temp4}
\lim_{\cK\ni k\to\infty}\tilde{\vG}_n^{k-1}(\tilde{\vG}_n^{k-1})^\top\vA_n^k-\tilde{\vH}_n^k(\tilde{\vH}_n^k)^\top
\vA_n^k=\vzero,\,\forall n.
\end{equation}
Note $\tilde{\vG}_n^{k-1}(\tilde{\vG}_n^{k-1})^\top=\vG_n^{k-1}(\vG_n^{k-1})^\top$ and $\tilde{\vH}_n^k(\tilde{\vH}_n^k)^\top=\vH_n^k(\vH_n^k)^\top$. We have from \eqref{eq:cvg2-temp4} that
\begin{equation}\label{eq:cvg2-temp5}
\lim_{\cK\ni k\to\infty}\vG_n^{k-1}(\vG_n^{k-1})^\top\vA_n^k-\vH_n^k(\vH_n^k)^\top\vA_n^k=\vzero,\,\forall n,
\end{equation}
which together with \eqref{eq:lam} indicates 
\begin{equation}\label{eq:cvg2-temp6}
\lim_{\cK\ni k\to\infty}\vH_n^k(\vH_n^k)^\top\vA_n^k-
\vA_n^k\bm{\Lambda}_n^k=\vzero, \,\forall n.
\end{equation}
From \eqref{eq:lam}, it holds that 
$$\bm{\Lambda}_n^k=(\vA_n^k)^\top\vG_n^{k-1}(\vG_n^{k-1})^\top\vA_n^k,$$
and thus \eqref{eq:cvg2-temp5} implies
$$\lim_{\cK\ni k\to\infty}\bm{\Lambda}_n^k=\lim_{\cK\ni k\to\infty}(\vA_n^k)^\top\vH_n^k(\vH_n^k)^\top\vA_n^k=
\bar{\vA}_n^\top\bar{\vH}_n\bar{\vH}_n^\top\bar{\vA}_n=:\bar{\bm{\Lambda}}_n,\,\forall n,$$
where $\bar{\vH}_n=\bar{\bm{\cX}}\times_{i=1}^{n-1}\bar{\vA}_i^\top
\times_{i=n+1}^N\bar{\vA}_i^\top$. In addition,
$$\lim_{\cK\ni k\to\infty}\bm{\cY}^k=\cP_\Omega\big(\bar{\bm{\cX}}
\times_{i=1}^N(\bar{\vA}_i\bar{\vA}_i^\top)\big)-\cP_\Omega(\bm{\cM})=:\bar{\bm{\cY}},$$
and $(\bar{\vA},\bar{\bm{\cX}},\bar{\bm{\Lambda}},\bar{\bm{\cY}})$ satisfies the KKT conditions in \eqref{eq:kkt3} from \eqref{eq:cvg2-lim-y}, \eqref{eq:cvg2-temp6} and the feasibility of $(\vA^k,\bm{\cX}^k)$ for all $k$. 

If $\{\cP_{\Omega^c}(\bm{\cX}^k)\}_{k=1}^\infty$ is bounded, then $\{\bm{\cX}^k\}_{k=1}^\infty$ is bounded, and so are $\{\tilde{\vG}_n^{k-1}\}_{k=1}^\infty$ and $\{\tilde{\vH}_n^k\}_{k=1}^\infty$. Hence, \eqref{eq:cvg2-temp6} holds with $\cK$ being the whole sequence, and \eqref{eq:kkt3-lim} immediately follows from \eqref{eq:cvg2-lim-y}, \eqref{eq:cvg2-temp6} and the feasibility of $(\vA^k,\bm{\cX}^k)$ for all $k$. This completes the proof.
\hfill$\blacksquare$\end{proof}

\section{Numerical results}\label{sec:numerical}
In this section, we test Algorithm \ref{alg:ihooi}, dubbed as iHOOI, on both synthetic and real-world datasets and compare it to the alternating least squares method in Appendix \ref{app:als}, named as ALSaS, and state-of-the-art methods for tensor factorization with missing values and also low-rank tensor completion. 


\subsection{Implementation details} The problem \eqref{eq:main1} requires estimation on rank $r_n$'s. Depending on applications, they can be either fixed or adaptively updated. Usually, smaller $r_n$'s make more data compression but may result in larger fitting error while too large ones can cause overfitting problem. 
Following \cite{wen2012lmafit}, we apply a rank-increasing strategy to both Algorithms \ref{alg:lrtfit} and \ref{alg:ihooi}. We start from small $r_n$'s and then gradually increase them based on the data fitting. Specifically, we set $r_n$'s to some small positive integers (e.g., $r_n=1,\forall n$) at the beginning of the algorithms. After each iteration $k$, we increase one $r_n$ to $\min(r_n+\Delta r_n, r_n^{\max})$ if 
\begin{equation}\label{eq:rechg}
\left|1-\frac{\fit_{k+1}}{\fit_k}\right|\le 10^{-2}
\end{equation}
where $\Delta r_n$ is a small positive integer, $r_n^{\max}$ is the user-specified maximal rank estimate, $\fit_k=\|\cP_\Omega(\bm{\cC}^k\times_{i=1}^N\vA_i^k-\bm{\cM})\|_F$
for Algorithm \ref{alg:lrtfit}, and
$\fit_k=\|\cP_\Omega(\bm{\cX}^k\times_{i=1}^N(\vA_i^k(\vA_i^k)^\top)-\bm{\cM})\|_F$
for Algorithm \ref{alg:ihooi}. The condition in \eqref{eq:rechg} implies ``slow'' progress in the current $(r_1,\ldots,r_N)$-dimensional space, and thus we slightly enlarge the search space. Throughout the tests, we set $\Delta r_n=1,\forall n$, and as the condition in \eqref{eq:rechg} is satisfied, we choose mode $n_0=\argmax_n (r_n^{\max}-r_n)$ and increase $r_{n^0}$ while keeping other $r_n$'s unchanged. In addition, we augment $\vA_{n_0}$ by first adding a randomly generated vector as the last column and then orthonormalizing it. We terminate Algorithms \ref{alg:lrtfit} and \ref{alg:ihooi} if they run to a maximum number of iterations or a maximum running time or one of the following conditions is satisfied at a certain iteration $k$:
\begin{subequations}\label{eq:crit}
\begin{align}
\frac{\fit_{k+1}}{\|\cP_\Omega(\bm{\cM})\|_F}\le &\, tol,\\
\frac{|\obj_{k+1}-\obj_k|}{1+\obj_k}\le &\, tol, 
\end{align}
\end{subequations}
where $\obj_k=f(\bm{\cC}^k,\vA^k,\bm{\cX}^k)$ for Algorithm \ref{alg:lrtfit} and $\obj_k=g(\vA^k,\bm{\cX}^k)$ for Algorithm \ref{alg:ihooi}, and $tol$ is a small positive number that will be specified below.

\subsection{Convergence behavior}\label{sec:speed} We first test the convergence of iHOOI and compare it to ALSaS and the weighted Tucker (WTucker) factorization with missing values in \cite{filipovic2013tucker}. The model solved by WTucker is similar to \eqref{eq:main1} but has no orthogonality constraint on the factor matrices. WTucker is also a BCD-type method and employs the nonlinear conjugate gradient (NCG) method to solve each subproblem. We test the three algorithms on two random tensors. Both of them have the form of $\bm{\cM}=\bm{\cC}\times_1\vA_1\times_2\vA_2\times_3\vA_3$, where the entries of $\bm{\cC}$ and $\vA_n$'s follow identically independent standard normal distribution. The first tensor has balanced size $100\times 100\times 100$ with core size $10\times10\times10$, and the second one is unbalanced with size $10\times10\times1000$ and core size $4\times4\times10$. For both tensors, we uniformly randomly choose 20\% entries of $\bm{\cM}$ as known.  
We use the same random starting points for the three algorithms and run them to 100 iterations or 20 seconds on the first tensor and 1000 iterations or 20 seconds on the second one. Figure \ref{fig:speed} plots their produced relative error $\|\cP_\Omega(\bm{\cC}^k\times_{i=1}^3\vA_i^k-\bm{\cM})\|_F/\|\bm{\cM}\|_F$ with respect to the iteration $k$ or running time. 

In the first row of Figure \ref{fig:speed}, for ALSaS and iHOOI, we initialize $r_n=\round(\frac{3}{4}\rankk(\vM_{(n)})),\,\forall n$ and apply the rank-increasing strategy  with $r_n^{\max}=\round(\frac{5}{4}\rankk(\vM_{(n)})),\,\forall n$, and for WTucker, we fix $r_n=\round(\frac{5}{4}\rankk(\vM_{(n)})),\,\forall n$ as suggested in \cite{filipovic2013tucker}. In the second row, we fix $r_n=\rankk(\vM_{(n)}),\,\forall n$ for all three algorithms. From the figure, we see that ALSaS and iHOOI perform almost the same in both rank-increasing and rank-fixing cases. WTucker decreases the fitting error faster than ALSaS and iHOOI in the beginning. However, it does not decrease the error any more or decreases extremely slowly after a few iterations while ALSaS and iHOOI can further decrease the errors into lower values. This may be because WTucker stagnates at some local solution. 
Although the code of WTucker may be modified to also include a rank-adjusting strategy to help avoid local solutions, we note that NCG for subproblems converges slowly in the first several outer iterations and then becomes fast due to warm-start. Hence, we doubt that WTucker can be even slower with an adaptive rank-adjusting strategy.

\begin{figure}\caption{Convergence behavior of ALSaS, iHOOI, and WTucker without using true ranks (first row) and with $r_n$'s fixed to true ranks (second row) on $100\times100\times100$ (first two columns) and $10\times10\times1000$ (last two colums) random tensors.}\label{fig:speed}
\centering
\includegraphics[width=0.22\textwidth]{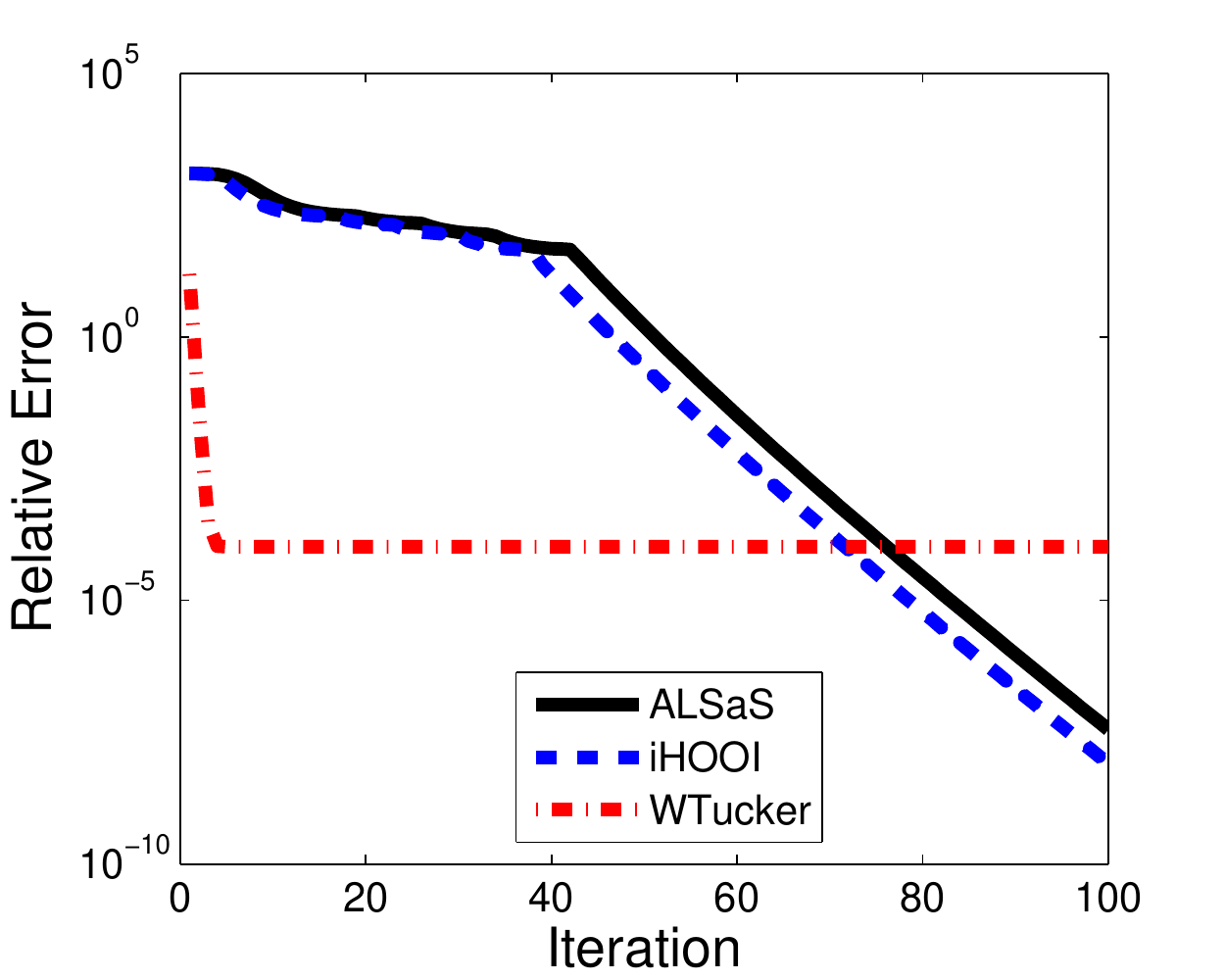}
\includegraphics[width=0.22\textwidth]{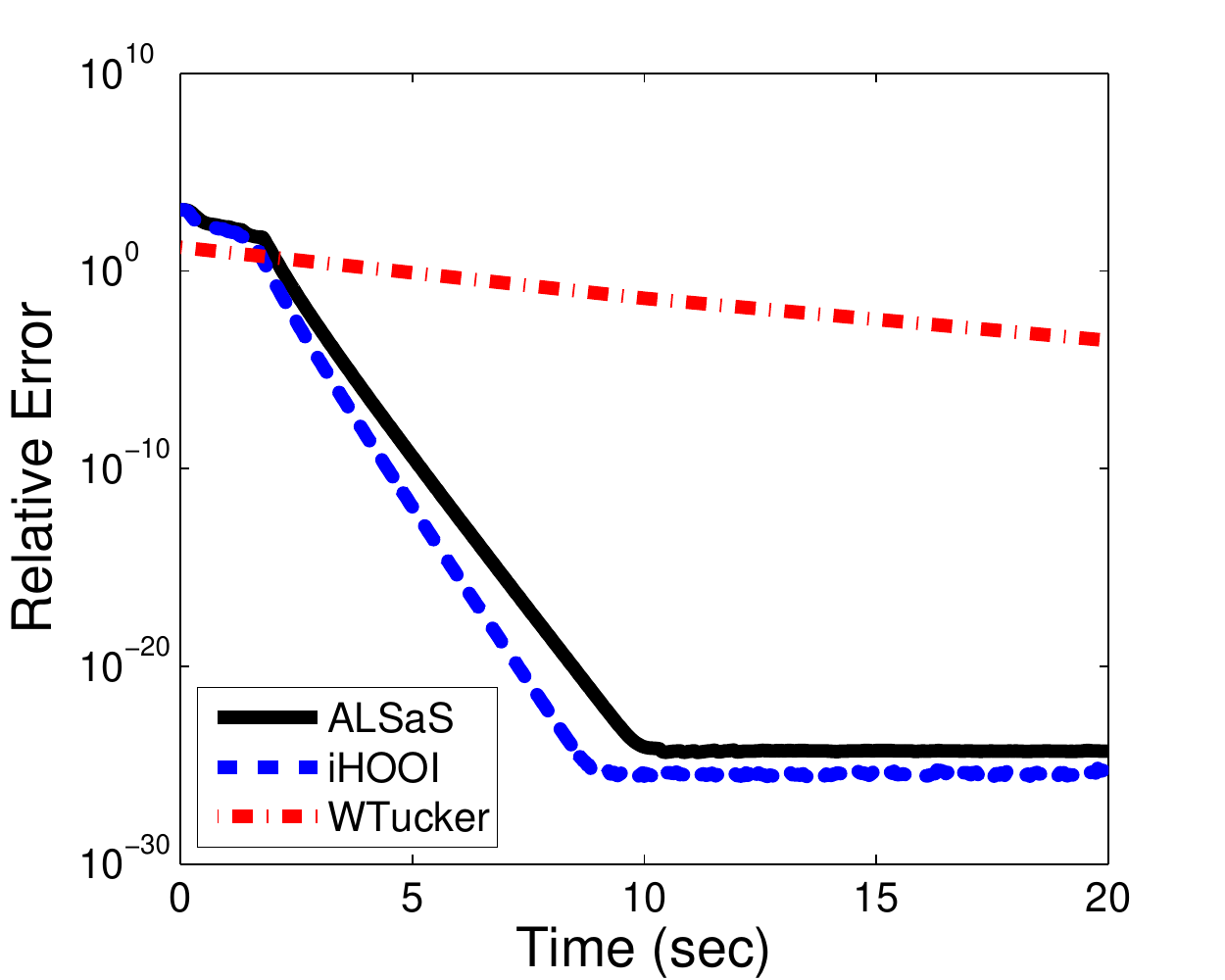}
\includegraphics[width=0.22\textwidth]{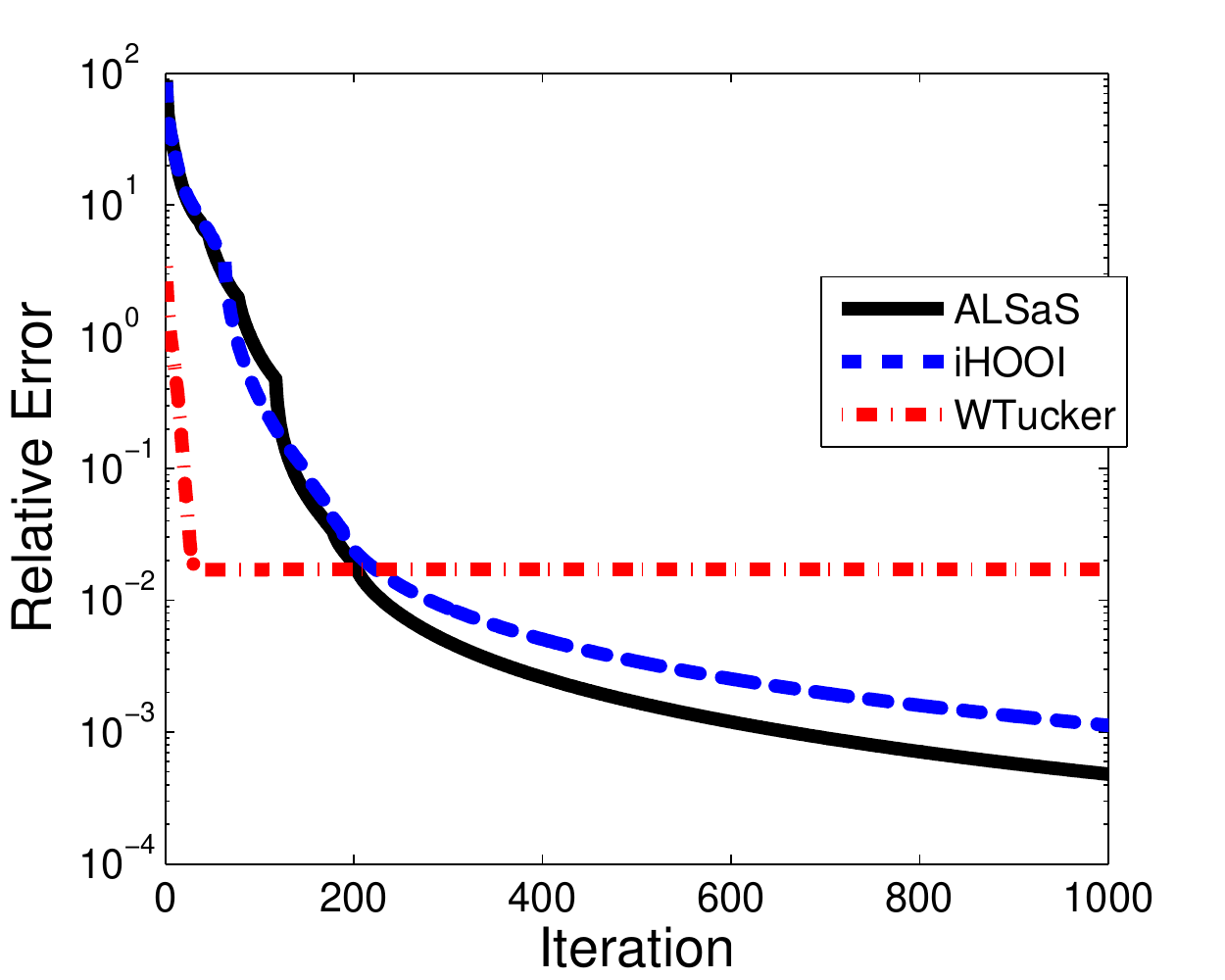}
\includegraphics[width=0.22\textwidth]{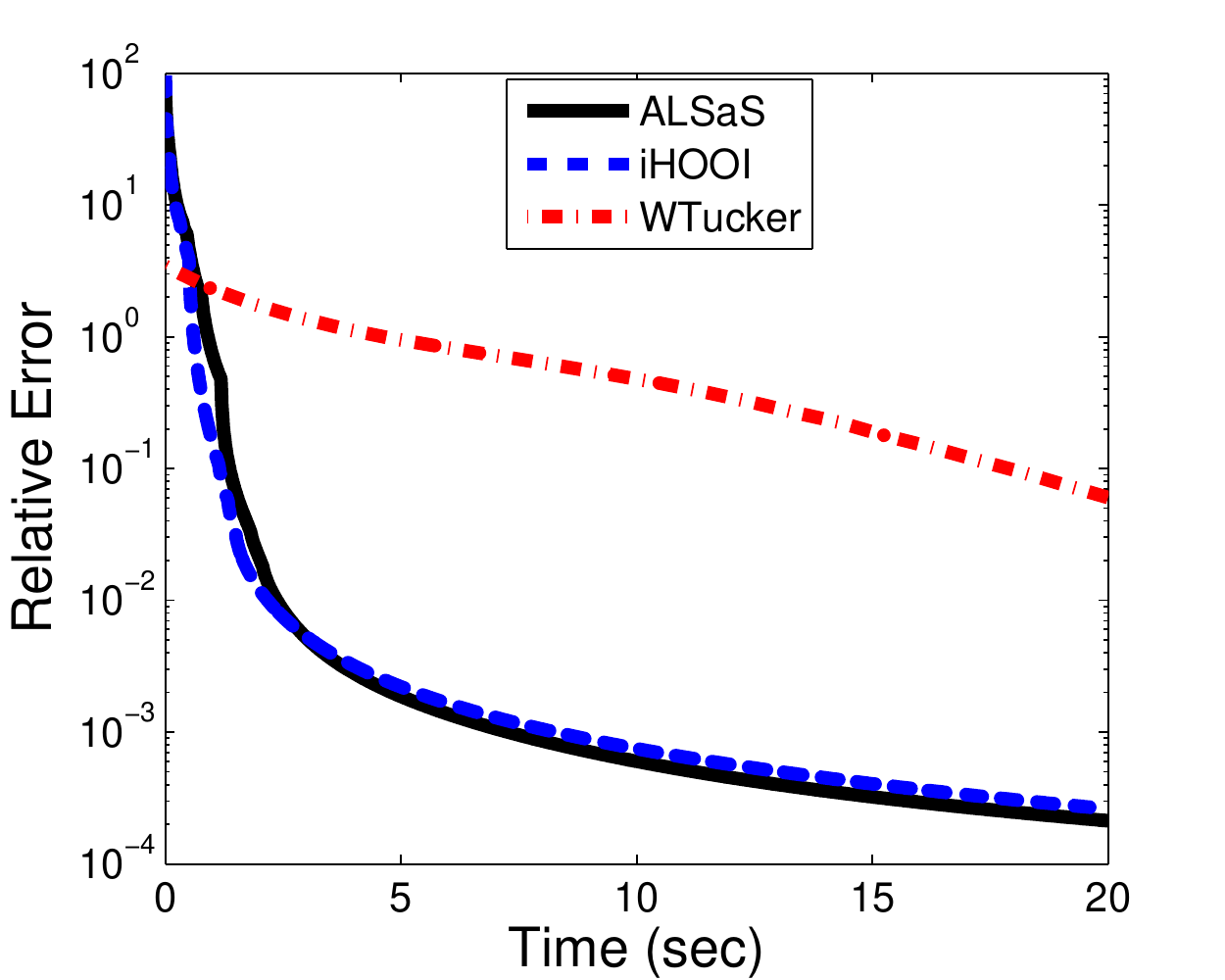}\\
\includegraphics[width=0.22\textwidth]{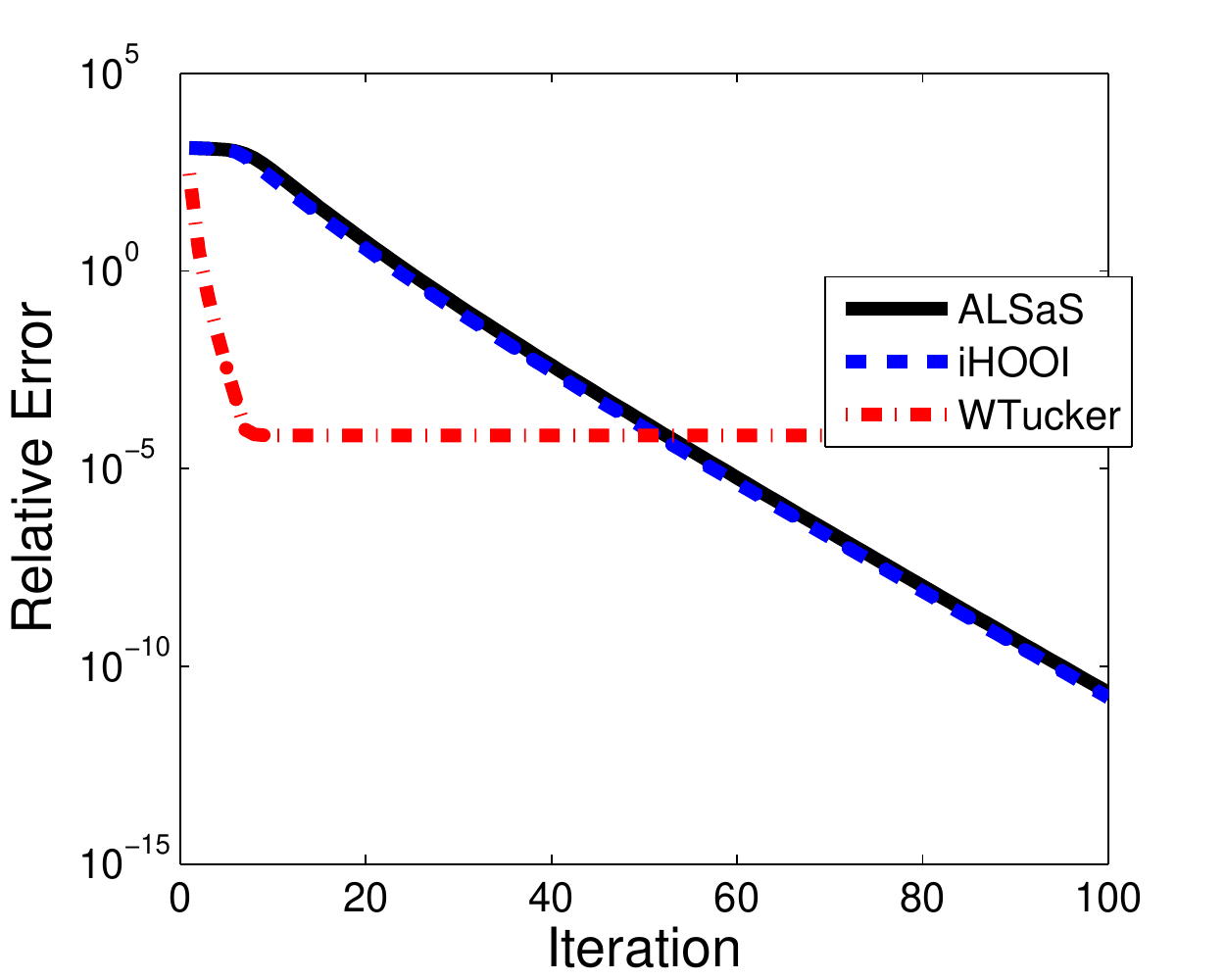}
\includegraphics[width=0.22\textwidth]{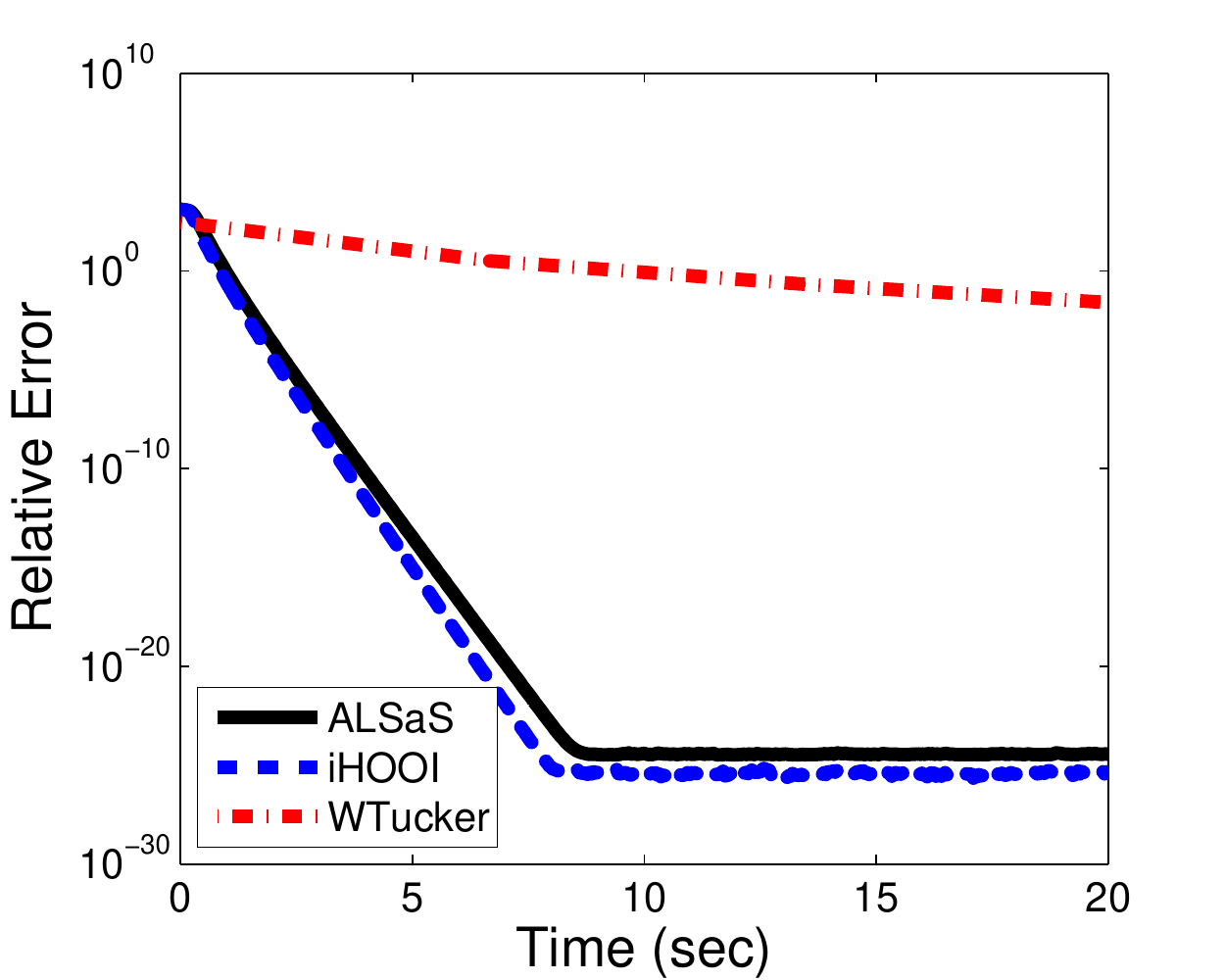}
\includegraphics[width=0.22\textwidth]{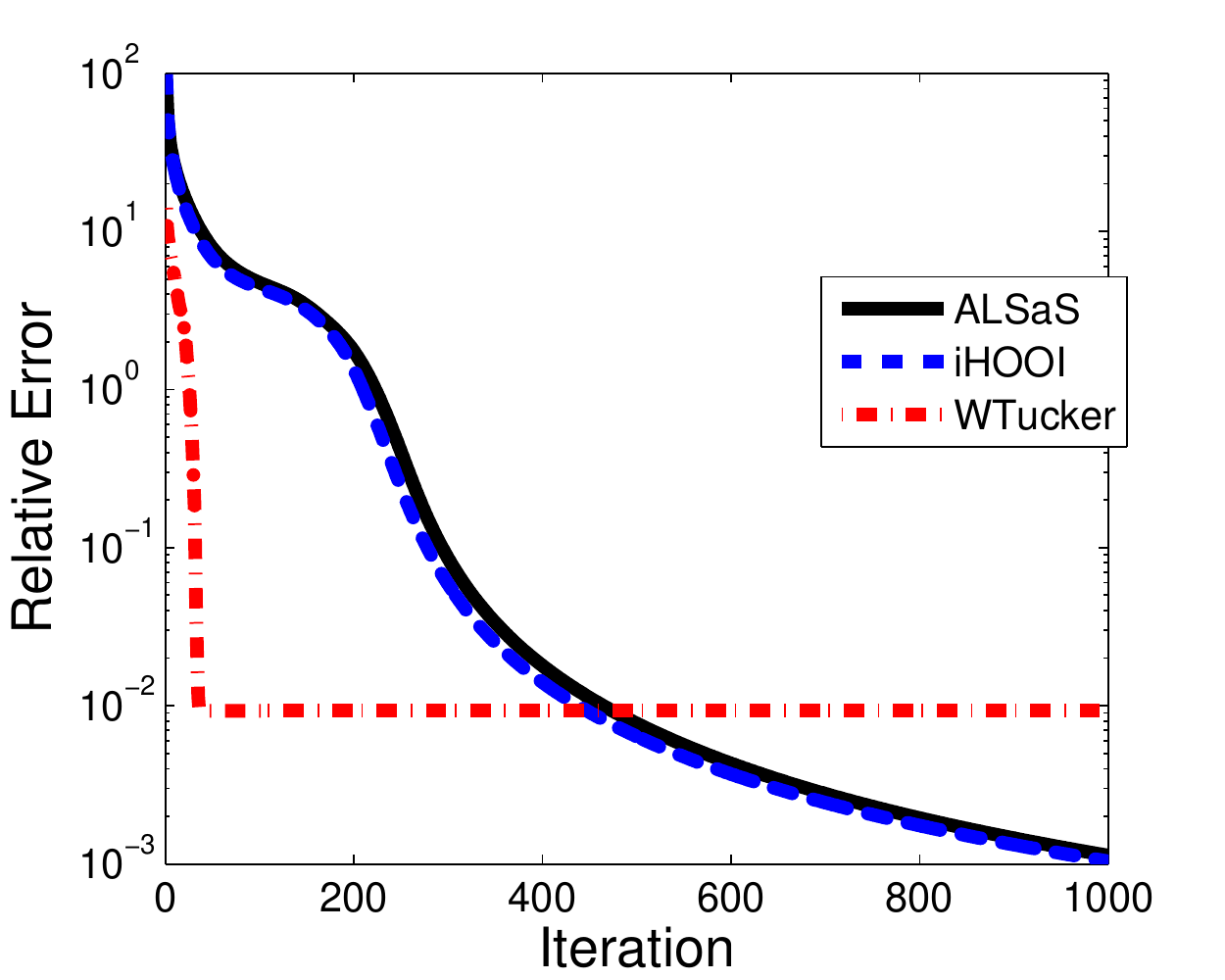}
\includegraphics[width=0.22\textwidth]{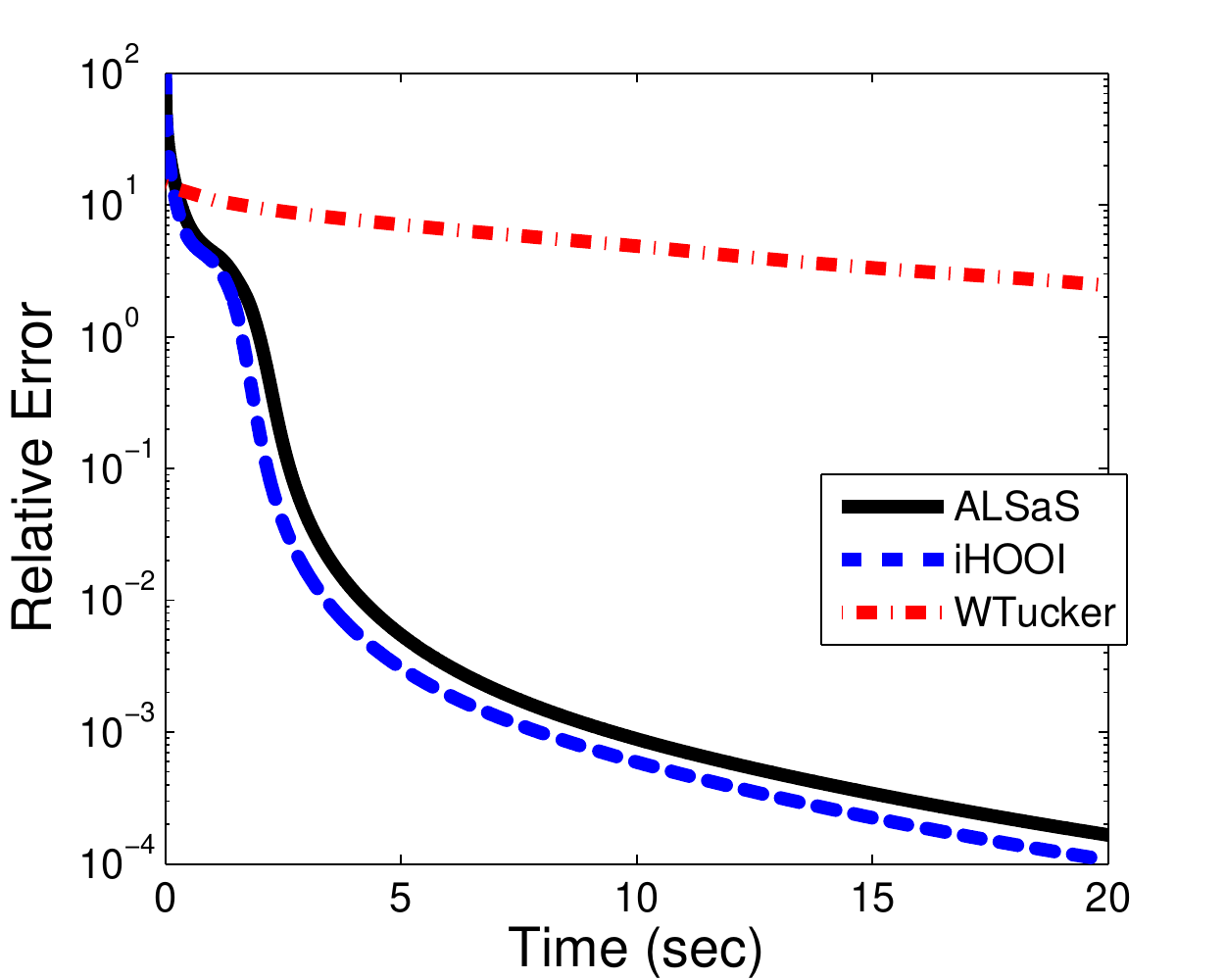}
\end{figure}

\subsection{Recoverability on factors} In this section, we compare the performance of ALSaS, iHOOI, and WTucker in a different measure. The goal of solving \eqref{eq:main1} is to get an approximate HOSVD of the underlying tensor $\bm{\cM}$, and similarly WTucker is to obtain an approximate Tucker factorization of $\bm{\cM}$. For consistent comparison, we normalize the factors given by WTucker in the same way as in \eqref{eq:normal}. Note that the normalization does not change the data fitting. To evaluate the decomposition, we measure the distance of the output factors (after certain rotation) to the original ones. Specifically, suppose 
that $\bm{\cC},\hat{\bm{\cC}}\in\RR^{r_1\times\ldots\times r_N}$, and $\vA_n, \hat{\vA}_n\in\RR^{m_n\times r_n}$  both have orthonormal columns for all $n$. We let
\begin{equation}\label{eq:dist}
\err(\bm{\cC},\vA; \hat{\bm{\cC}},\hat{\vA}) = \frac{\|\bm{\cC}\times_{n=1}^N\vA_n-\hat{\bm{\cC}}\times_{n=1}^N\hat{\vA}_n\|_F}{\|\bm{\cC}\times_{n=1}^N\vA_n\|_F}+\sum_{n=1}^N\frac{\sqrt{r_n}-\|\vA_n^\top\hat{\vA}_n\|_F}{\sqrt{r_n}}.
\end{equation}
From the following theorem, we have that if $\err(\bm{\cC},\vA; \hat{\bm{\cC}},\hat{\vA})$ is small, the subspaces spanned by $\vA_n$ and $\hat{\vA}_n$ are close to each other for all $n$, and also after some orthogonal transformation, $\hat{\bm{\cC}}$ is close to $\bm{\cC}$. It is not difficult to show the theorem, and thus we omit its proof.

\begin{theorem}
For $\vA_n, \hat{\vA}_n\in\RR^{m_n\times r_n}$ with orthonormal columns,
it holds that $\|\vA_n^\top\hat{\vA}_n\|_F\le\sqrt{r_n}$. If the inequality holds with equality, then $\vA_n^\top\hat{\vA}_n$ is orthogonal and $\hat{\vA}_n=\vA_n\vA_n^\top\hat{\vA}_n$. In addition, if $\bm{\cC}\times_{n=1}^N\vA_n=\hat{\bm{\cC}}\times_{n=1}^N\hat{\vA}_n$ and $\|\vA_n^\top\hat{\vA}_n\|_F=\sqrt{r_n},\,\forall n$, then $\bm{\cC}=\hat{\bm{\cC}}\times_{n=1}^N(\vA_n^\top\hat{\vA}_n).$
\end{theorem}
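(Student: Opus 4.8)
The plan is to dispatch the three assertions in turn, each by a short direct computation using only elementary facts about matrices with orthonormal columns and the mode-product identity \eqref{eq:ttm}.

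\emph{First claim.} Set $\vM_n := \vA_n^\top\hat{\vA}_n\in\RR^{r_n\times r_n}$. Since $\vA_n$ has orthonormal columns, $\vA_n^\top$ has all its singular values equal to $1$, so $\|\vA_n^\top\|_{\op}=1$; combined with submultiplicativity $\|\vX\vY\|_F\le\|\vX\|_{\op}\|\vY\|_F$ and $\|\hat{\vA}_n\|_F=\sqrt{r_n}$ this gives $\|\vM_n\|_F\le\sqrt{r_n}$ immediately. (Equivalently, for any unit vector $\vx$ one has $\|\vM_n\vx\|_2\le\|\hat{\vA}_n\vx\|_2=\|\vx\|_2$, so every $\sigma_i(\vM_n)\le 1$ and $\|\vM_n\|_F^2=\sum_{i}\sigma_i^2(\vM_n)\le r_n$.)

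\emph{Equality case.} If $\|\vM_n\|_F=\sqrt{r_n}$, then the $r_n$ singular values of $\vM_n$ are each at most $1$ and their squares sum to $r_n$, hence all equal $1$; as $\vM_n$ is square this forces $\vM_n^\top\vM_n=\vM_n\vM_n^\top=\vI$, i.e.\ $\vA_n^\top\hat{\vA}_n$ is orthogonal. To obtain $\hat{\vA}_n=\vA_n\vM_n$ I would expand $\|\hat{\vA}_n-\vA_n\vM_n\|_F^2=\|\hat{\vA}_n\|_F^2-2\langle\vA_n^\top\hat{\vA}_n,\vM_n\rangle+\tr(\vM_n^\top\vA_n^\top\vA_n\vM_n)$ and use $\|\hat{\vA}_n\|_F^2=r_n$, $\vA_n^\top\vA_n=\vI$, and $\langle\vM_n,\vM_n\rangle=r_n$ to see that this equals $r_n-2r_n+r_n=0$, whence $\hat{\vA}_n=\vA_n\vM_n=\vA_n\vA_n^\top\hat{\vA}_n$.

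\emph{Core tensor.} Under the last hypotheses each $\vM_n=\vA_n^\top\hat{\vA}_n$ is orthogonal by the previous step. Apply $\times_n\vA_n^\top$ in every mode $n=1,\dots,N$ to both sides of $\bm{\cC}\times_{n=1}^N\vA_n=\hat{\bm{\cC}}\times_{n=1}^N\hat{\vA}_n$; invoking the composition rule \eqref{eq:ttm} together with $\vA_n^\top\vA_n=\vI$, the left-hand side collapses to $\bm{\cC}$ while the right-hand side becomes $\hat{\bm{\cC}}\times_{n=1}^N(\vA_n^\top\hat{\vA}_n)$, giving the claimed identity. The computation is entirely routine; the only place that deserves a moment's care is this last step, namely justifying that the $N$ mode-wise multiplications by $\vA_n^\top$ may be carried out independently — which is precisely \eqref{eq:ttm} combined with the fact that mode products along distinct modes commute. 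I do not expect any genuine obstacle.
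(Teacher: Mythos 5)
Your proof is correct, and since the paper explicitly omits its own proof (``It is not difficult to show the theorem''), there is nothing to diverge from: the singular-value bound $\sigma_i(\vA_n^\top\hat{\vA}_n)\le 1$, the zero-norm expansion of $\|\hat{\vA}_n-\vA_n\vA_n^\top\hat{\vA}_n\|_F^2$, and the mode-wise application of $\times_n\vA_n^\top$ via \eqref{eq:ttm} are exactly the intended elementary argument. All three steps check out.
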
  

We test the three algorithms on two random datasets. The first set consists of $50\times50\times50$ random tensors and the second one $30\times30\times30\times30$. They are first generated in the same way as that in section \ref{sec:speed} with $\rankk(\vM_{(n)})=r, \forall n$, and then factor matrix $\vA_n$'s are orthonormalized. We compare the performance of ALSaS, iHOOI, and WTucker on different $r$'s and sample ratios defined as
$$\mathrm{SR}:=\frac{|\Omega|}{\Pi_{i=1}^N m_i},$$
where $|\Omega|$ denotes the cardinality of $\Omega$. The indices in $\Omega$ are selected uniformly at random. For each pair of $r$ and SR, we generate 30 tensors independently and run the three algorithms with fixed $r_n=r,\forall n$ to 2000 iterations or stopping tolerance $10^{-6}$. All three algorithms start from the same random points. Let $(\bm{\cC},\vA)$ be the generated factors and $(\hat{\bm{\cC}},\hat{\vA})$ the output of an algorithm. If $\err(\bm{\cC},\vA; \hat{\bm{\cC}},\hat{\vA})\le 10^{-2}$, we regard the recovery to be successful. Figure \ref{fig:factor} shows the rates of successful recovery by each algorithm. From the figure, we see that iHOOI performs the same as ALSaS in all cases except at $r=21$ and $\mathrm{SR}=30\%$ in the second dataset where the former is slightly better. Both ALSaS and iHOOI perform much better than WTucker in particular for the second dataset.

\begin{figure}\caption{Rates of successfully recovering factors by ALSaS, iHOOI, and WTucker on $50\times50\times50$ (first row) and $30\times30\times30\times30$ (second row) randomly generated tensors for different multilinear ranks and sample ratios.}\label{fig:factor}
\vspace{0.2cm}
\centering
\begin{tabular}{ccc}
SR=10\% & SR=30\% & SR=50\% \\
\includegraphics[width=0.25\textwidth]{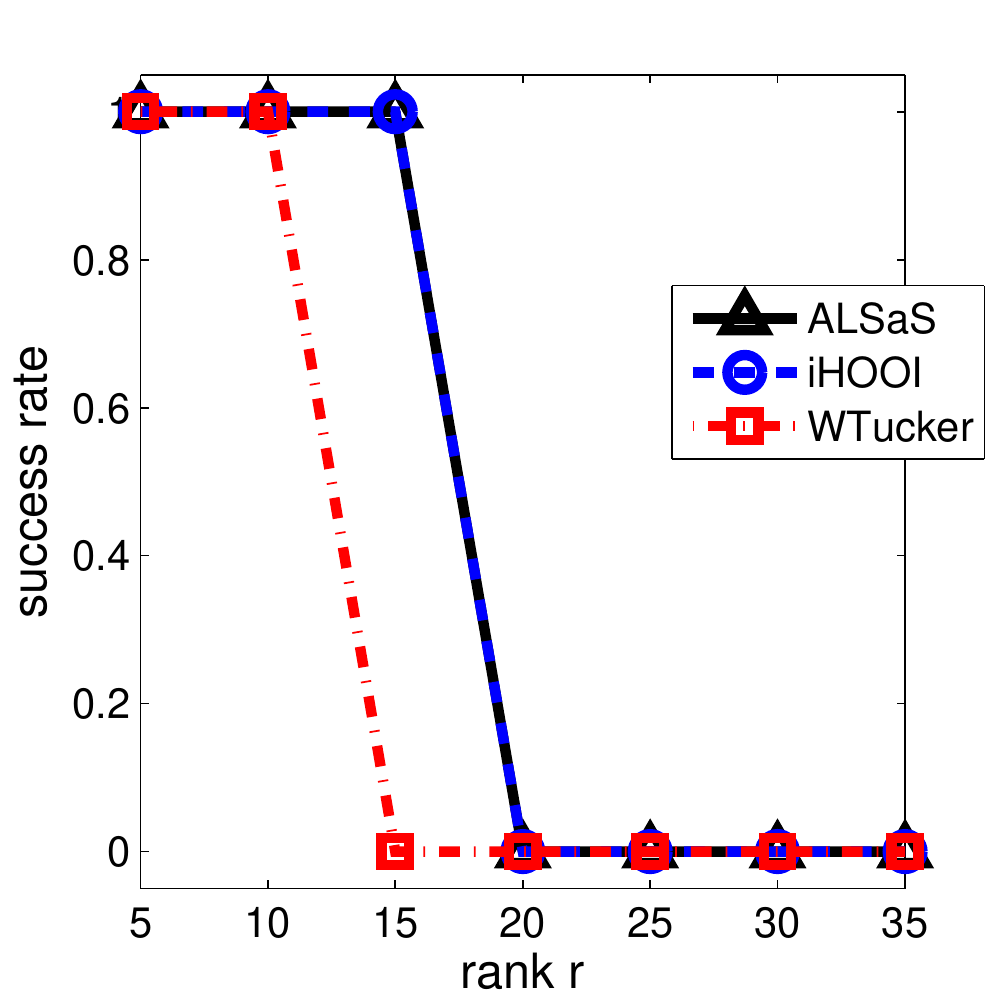}&
\includegraphics[width=0.25\textwidth]{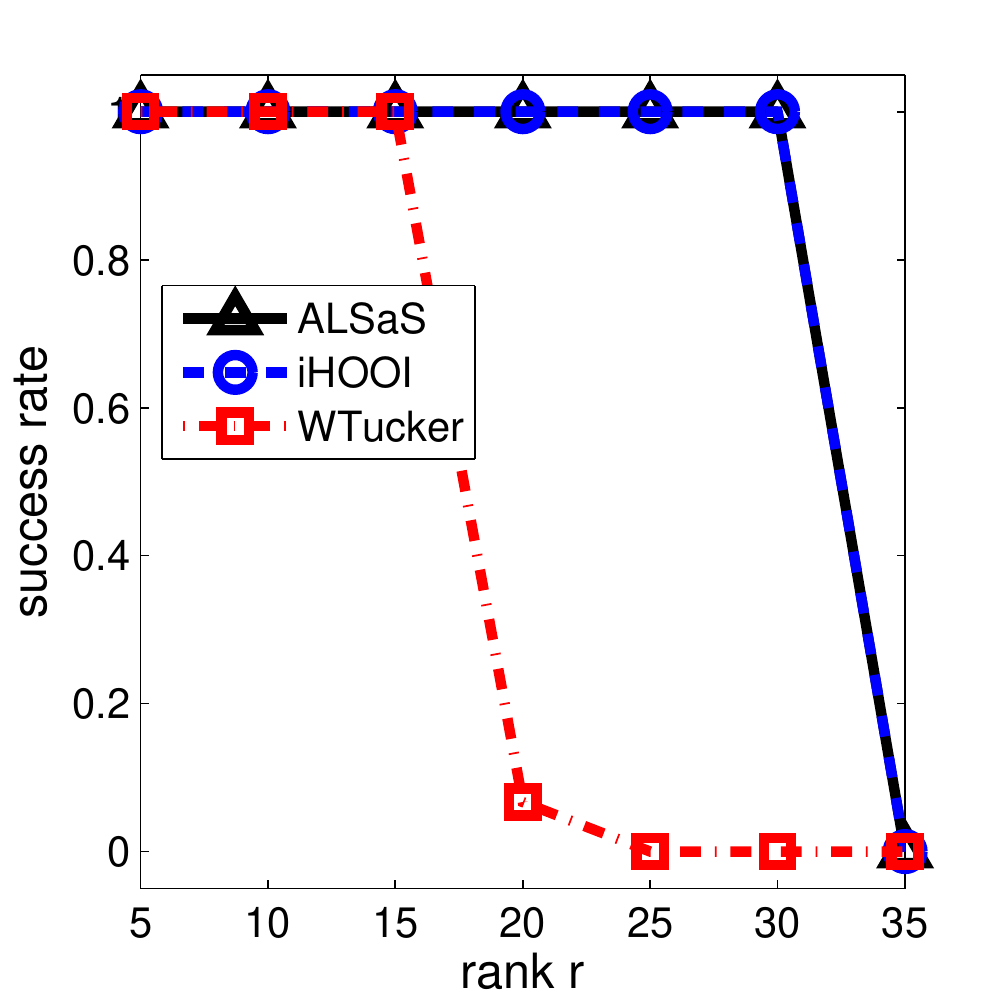}&
\includegraphics[width=0.25\textwidth]{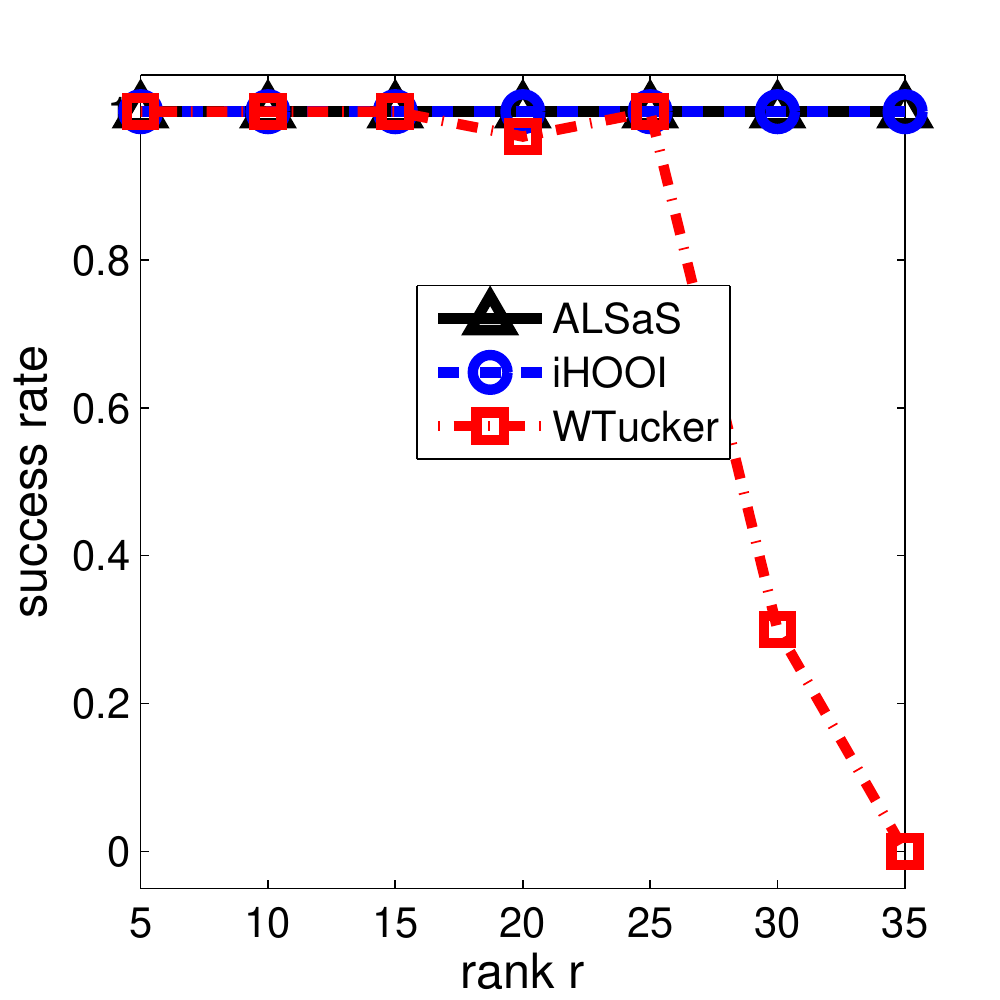}\\
\includegraphics[width=0.25\textwidth]{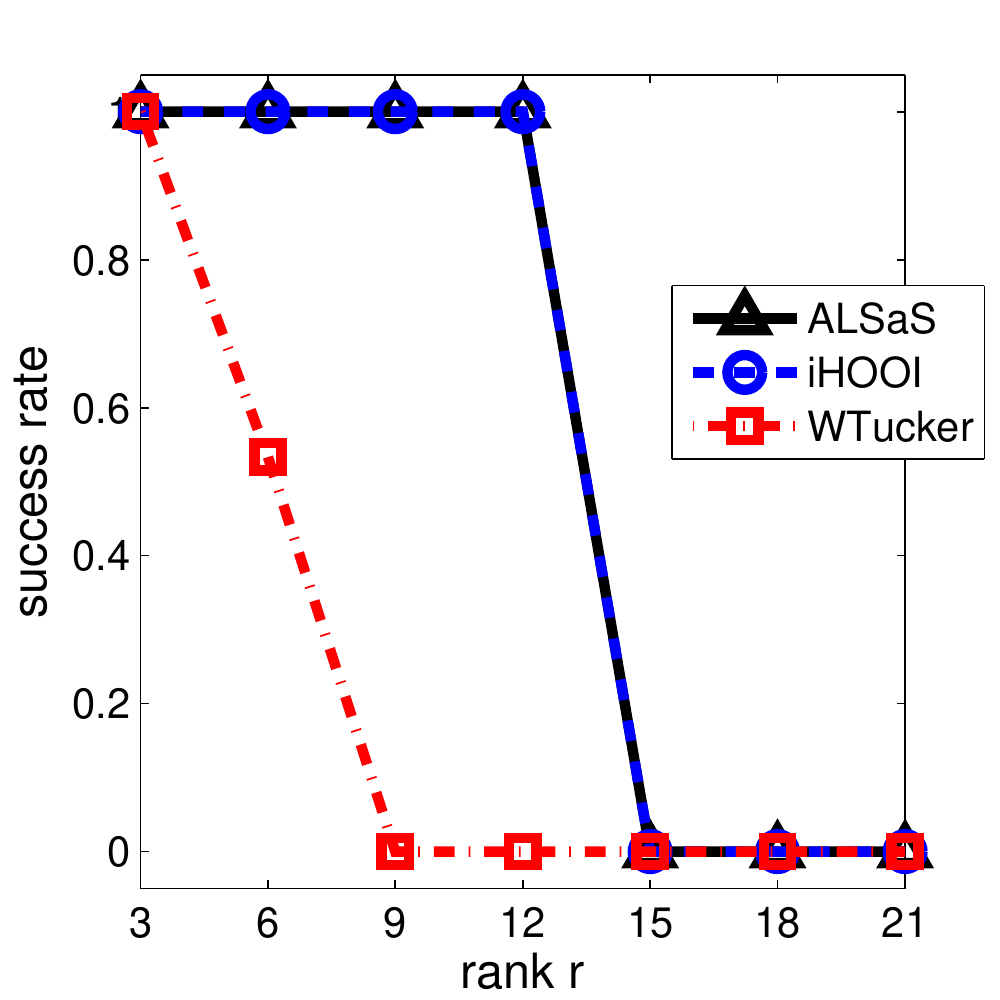}&
\includegraphics[width=0.25\textwidth]{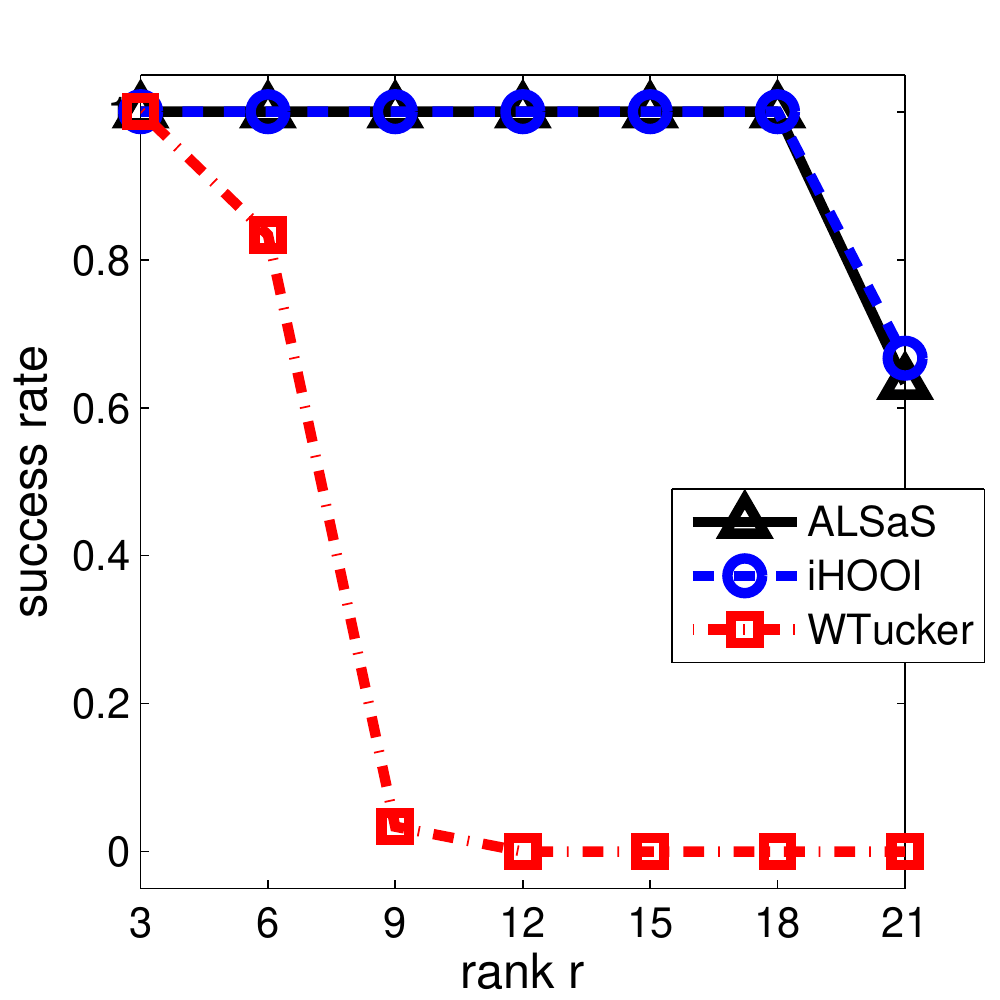}&
\includegraphics[width=0.25\textwidth]{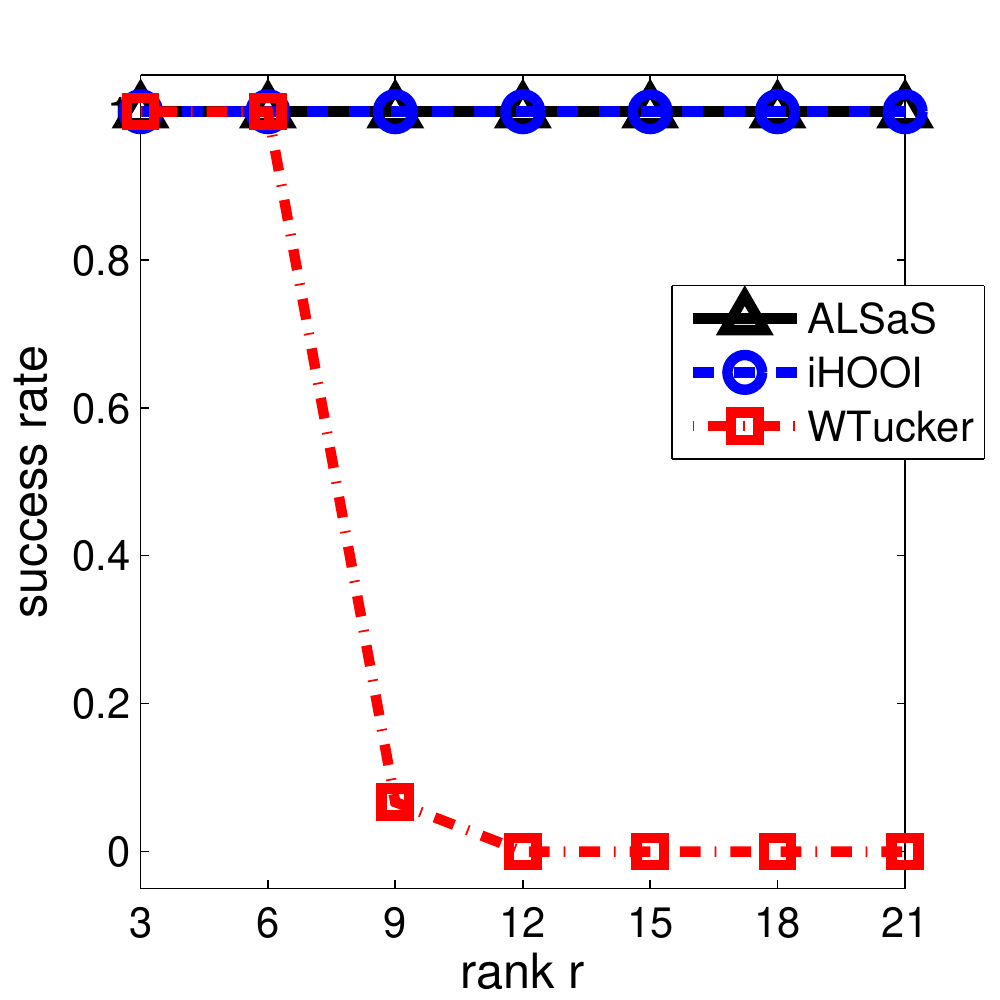}
\end{tabular}
\end{figure}

\subsection{Application to face recognition}\label{sec:facerecog} In this subsection, we use the factors obtained from ALSaS, iHOOI, and WTucker for face recognition and compare their prediction accuracies. As in the previous test, we normalize the factors given by WTucker. We use the cropped images in the extended Yale Face Database B\footnote{\url{http://vision.ucsd.edu/~leekc/ExtYaleDatabase/ExtYaleB.html}} \cite{georghiades2001few, lee2005acquiring},
which has 38 subjects with each one consisting of 64 face images taken under different illuminations. Each image originally has pixels of $168\times192$ and is downsampled into $51\times 58$ in our test. We vectorize each downsampled image and form the dataset into a $38\times64\times2958$ tensor. Some face images of the first subject are shown in Figure \ref{fig:facereg}.

\begin{figure}\caption{Selected face images of the first subject in the extended Yale Face Database B corresponding to 16 different illuminations.}\label{fig:facereg}
\vspace{0.2cm}
\centering
\includegraphics[width=0.055\textwidth]{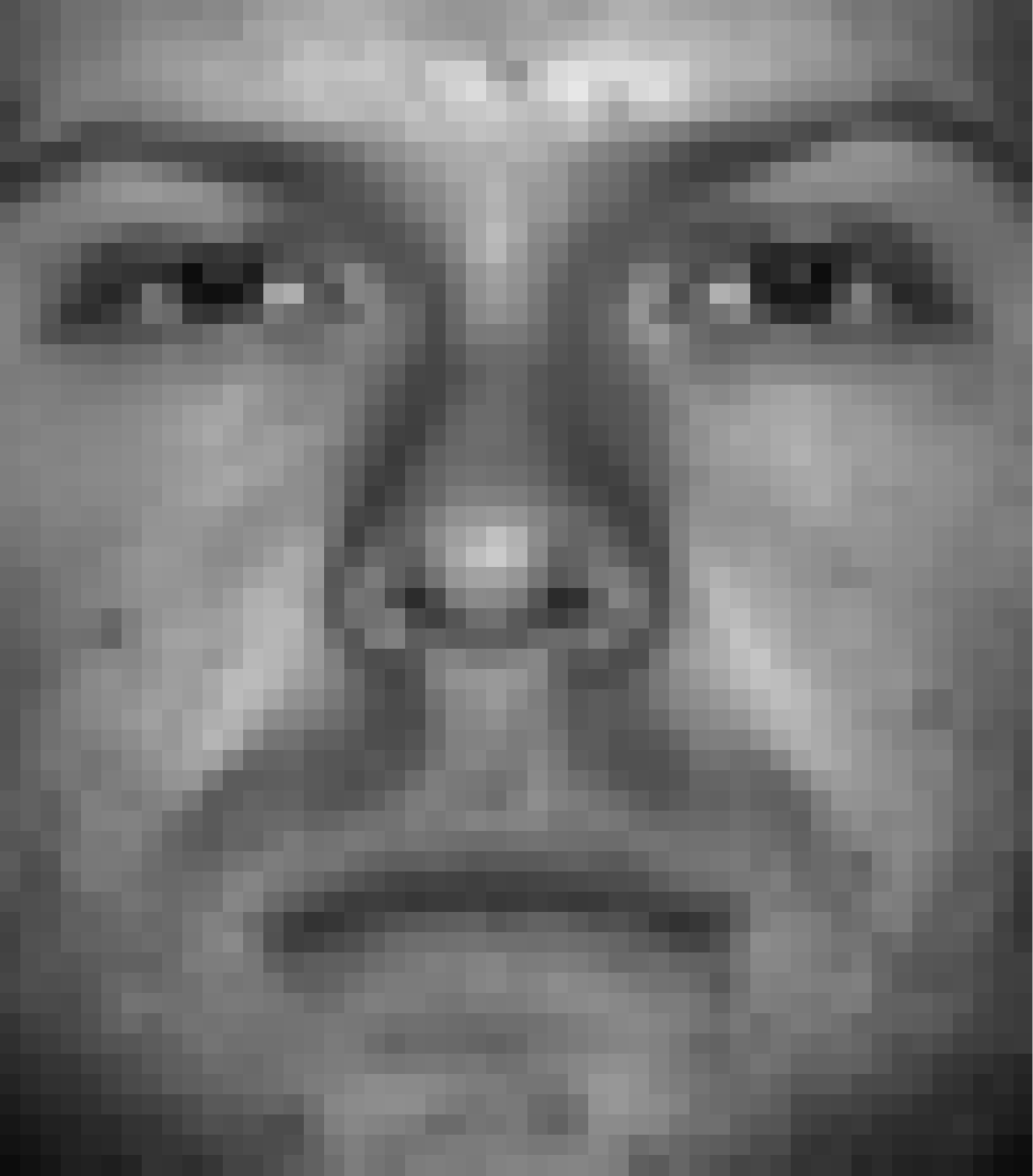}
\includegraphics[width=0.055\textwidth]{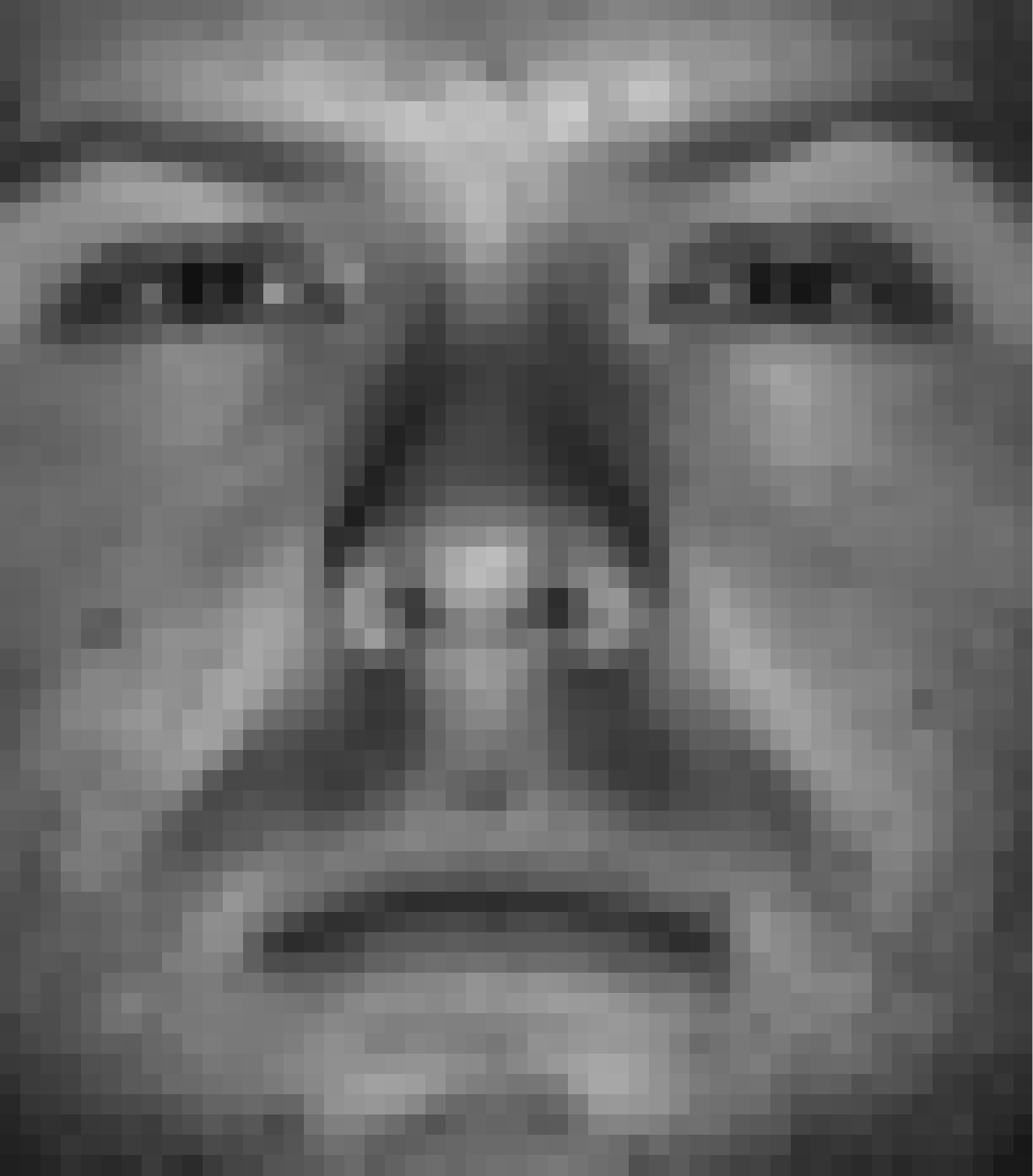}
\includegraphics[width=0.055\textwidth]{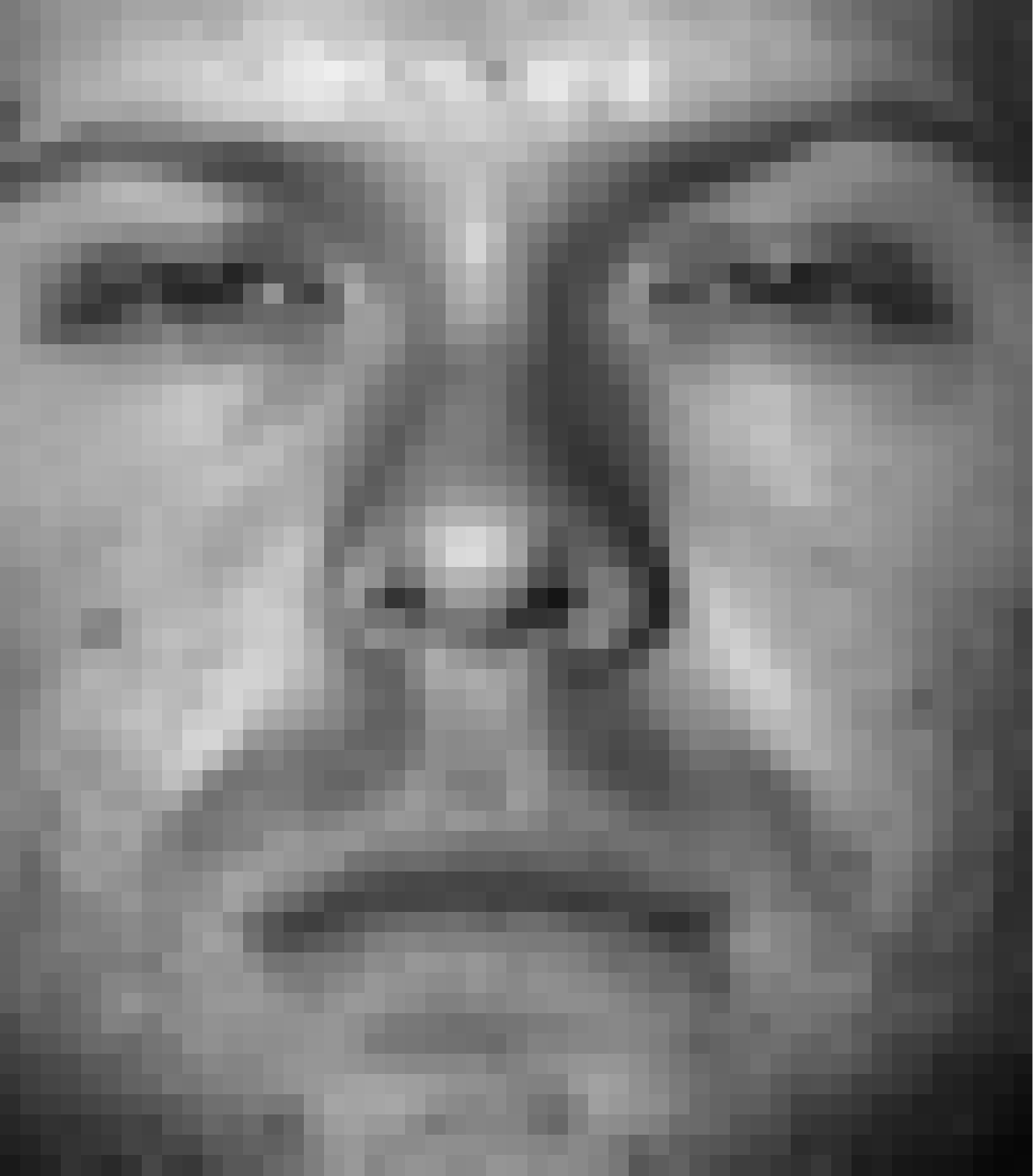}
\includegraphics[width=0.055\textwidth]{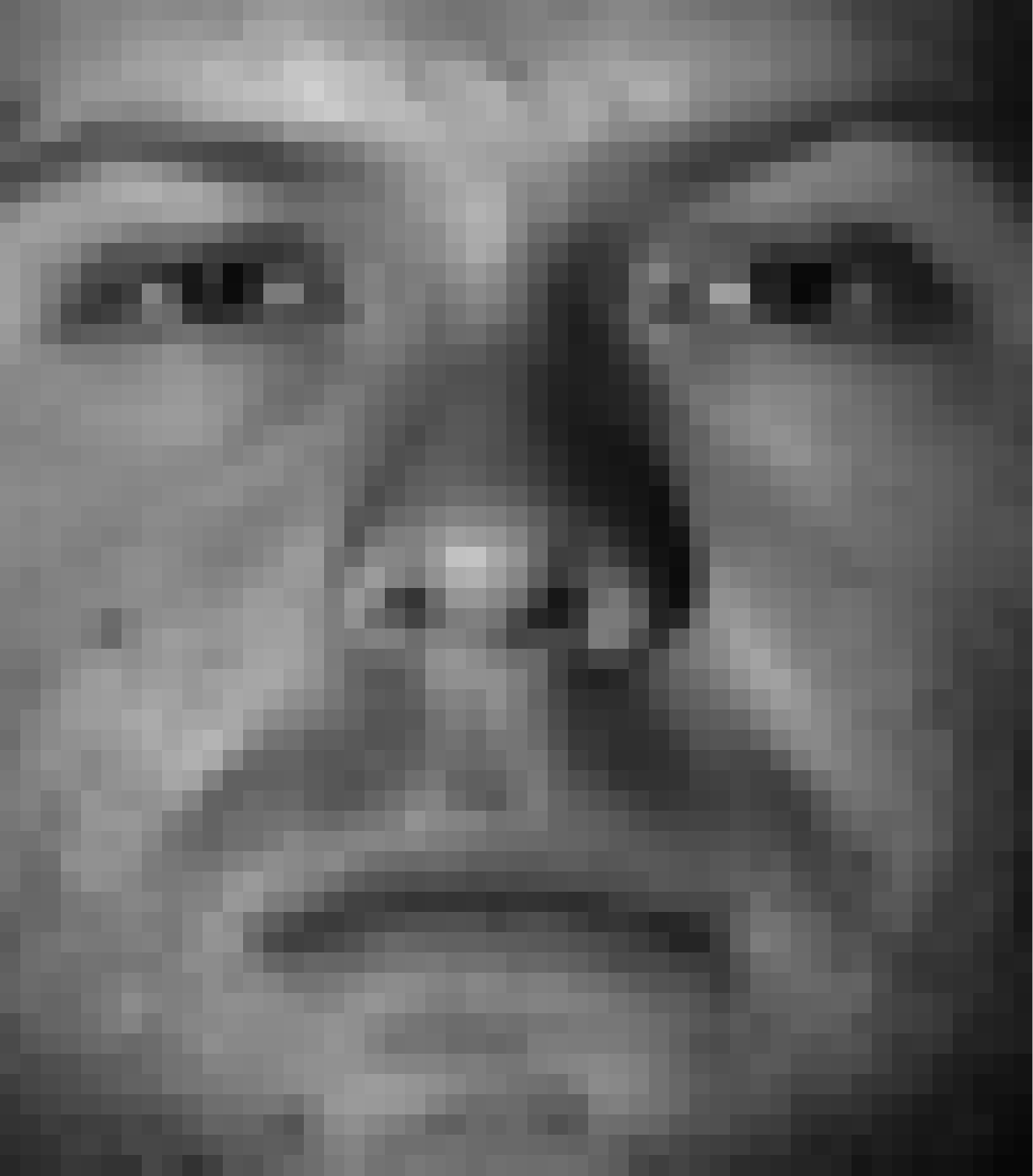}
\includegraphics[width=0.055\textwidth]{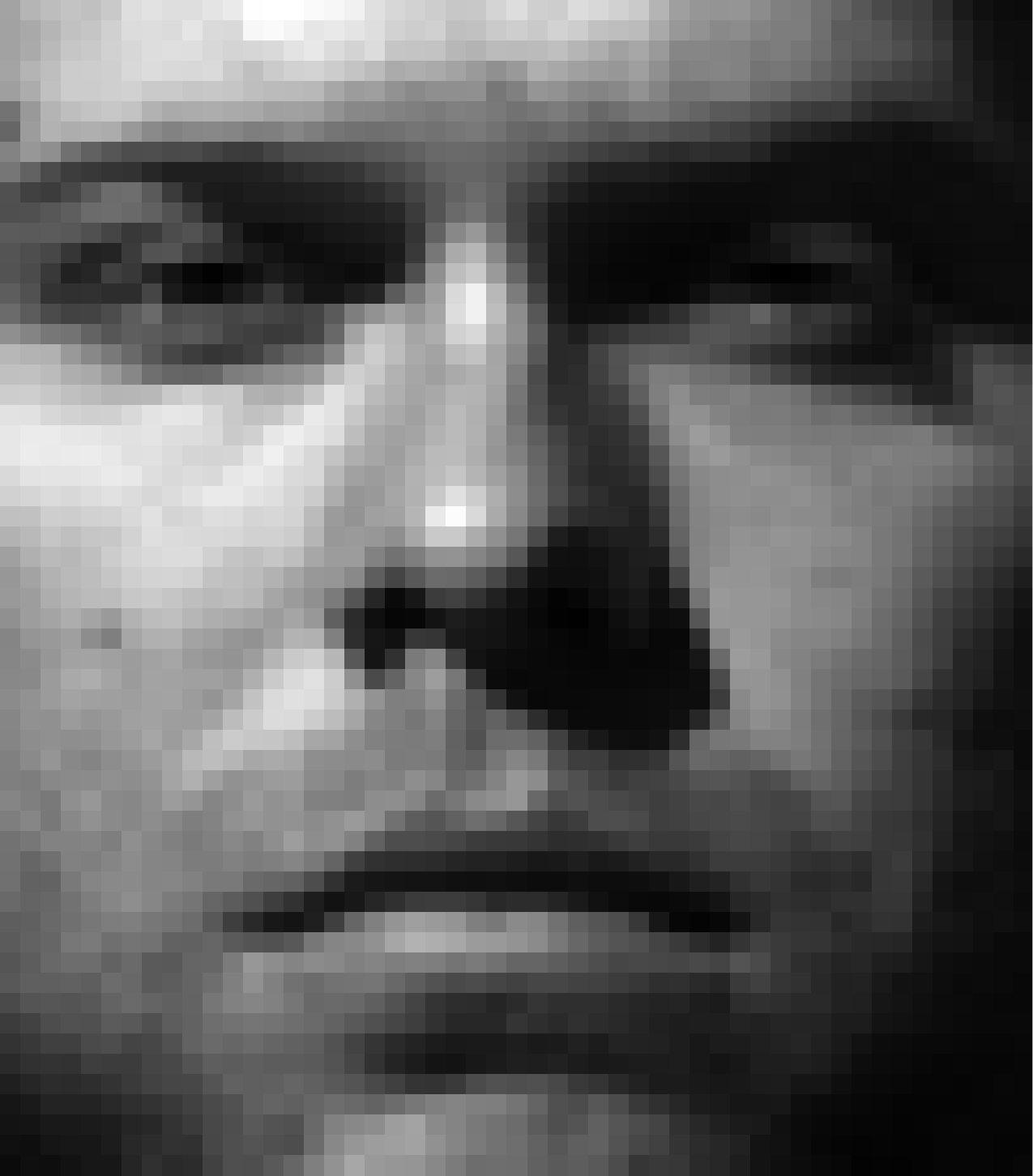}
\includegraphics[width=0.055\textwidth]{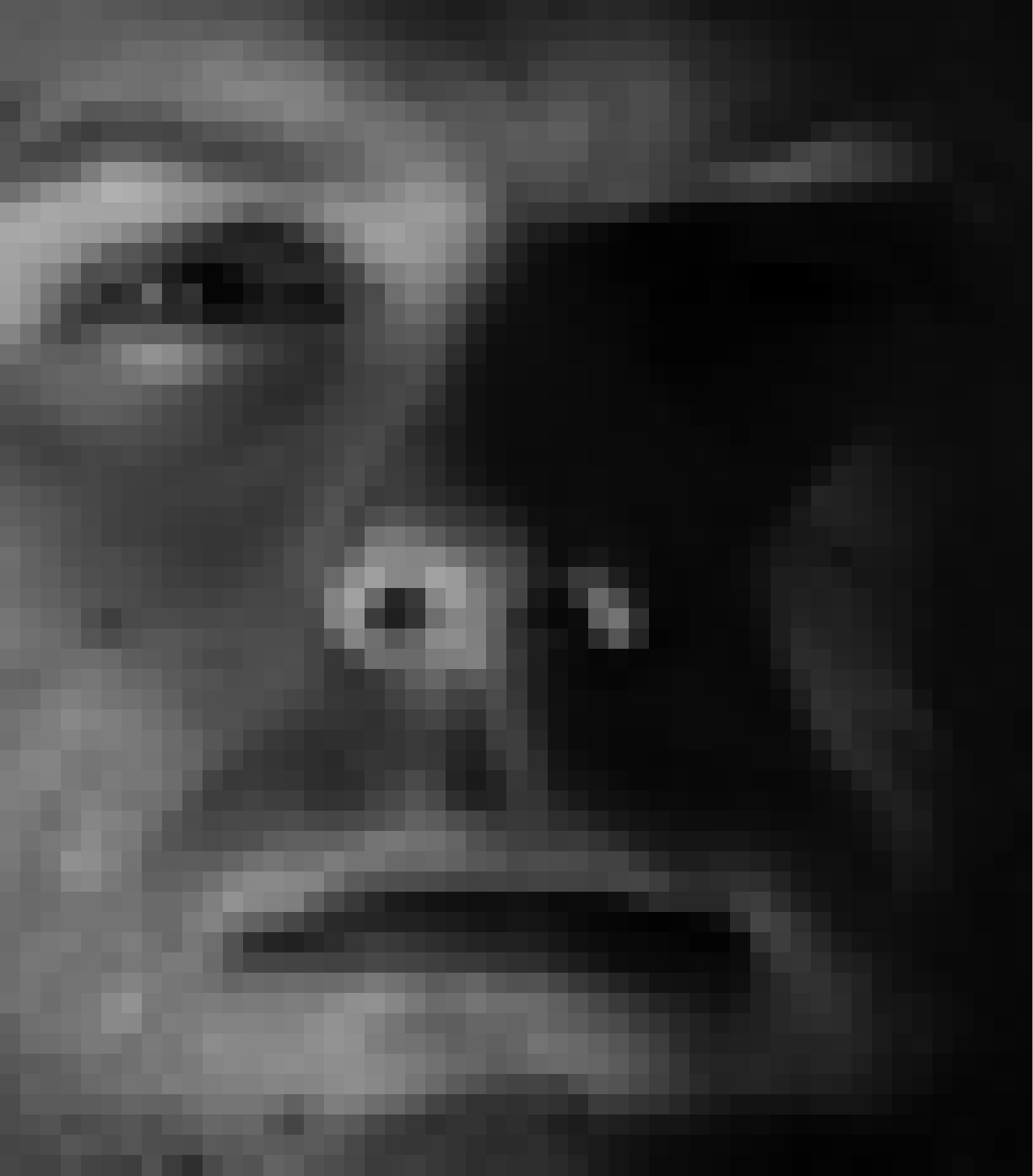}
\includegraphics[width=0.055\textwidth]{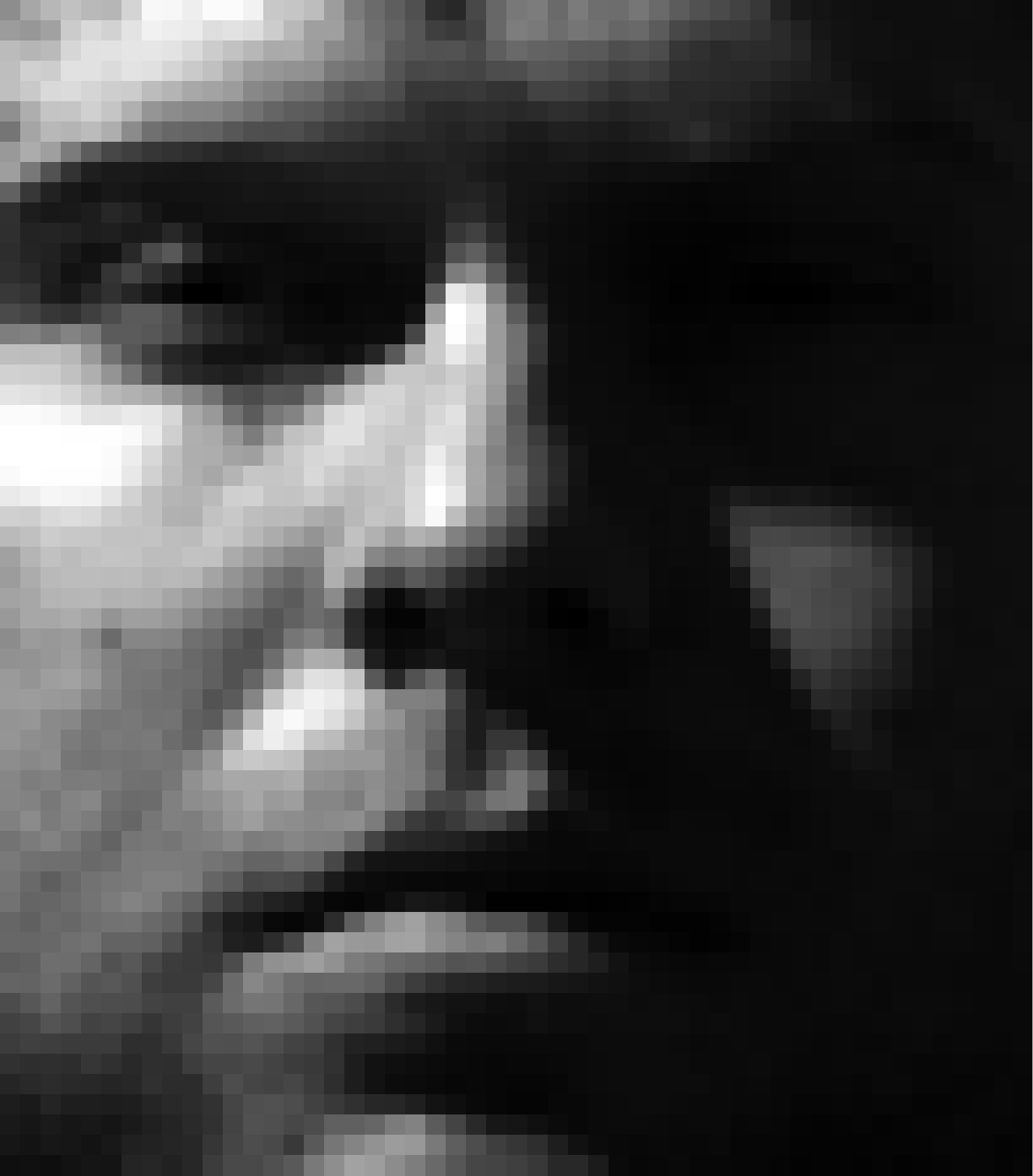}
\includegraphics[width=0.055\textwidth]{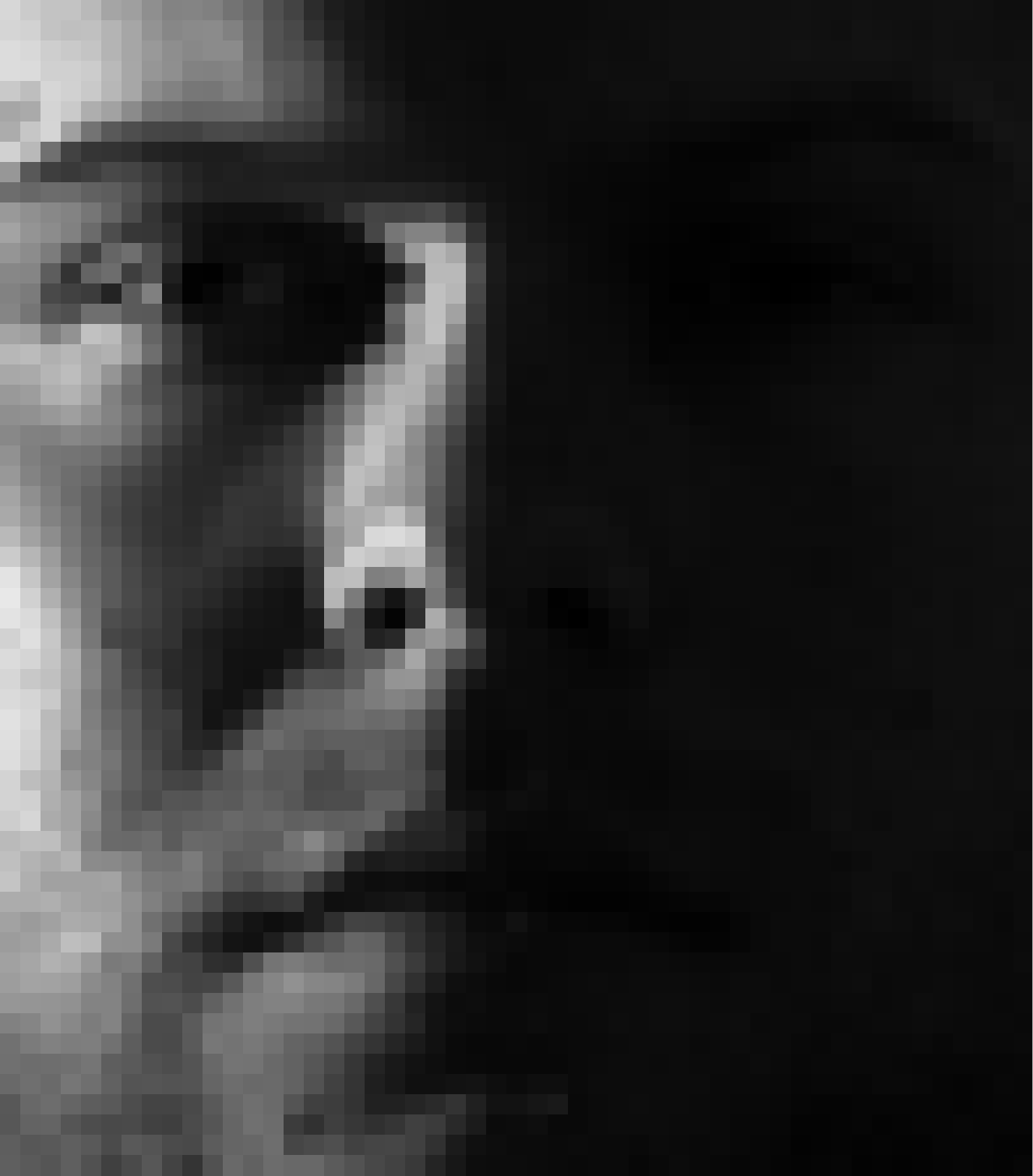}
\includegraphics[width=0.055\textwidth]{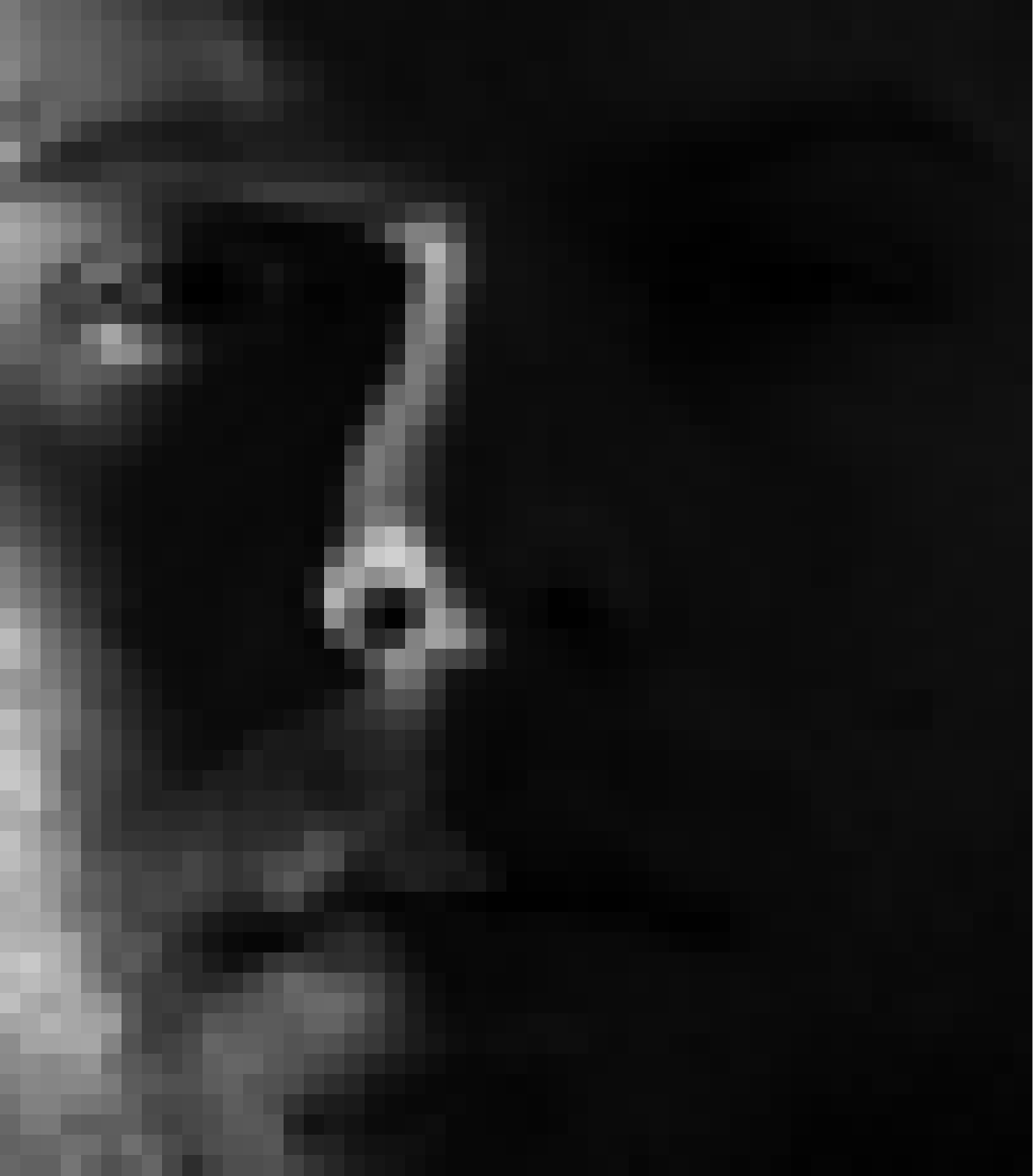}
\includegraphics[width=0.055\textwidth]{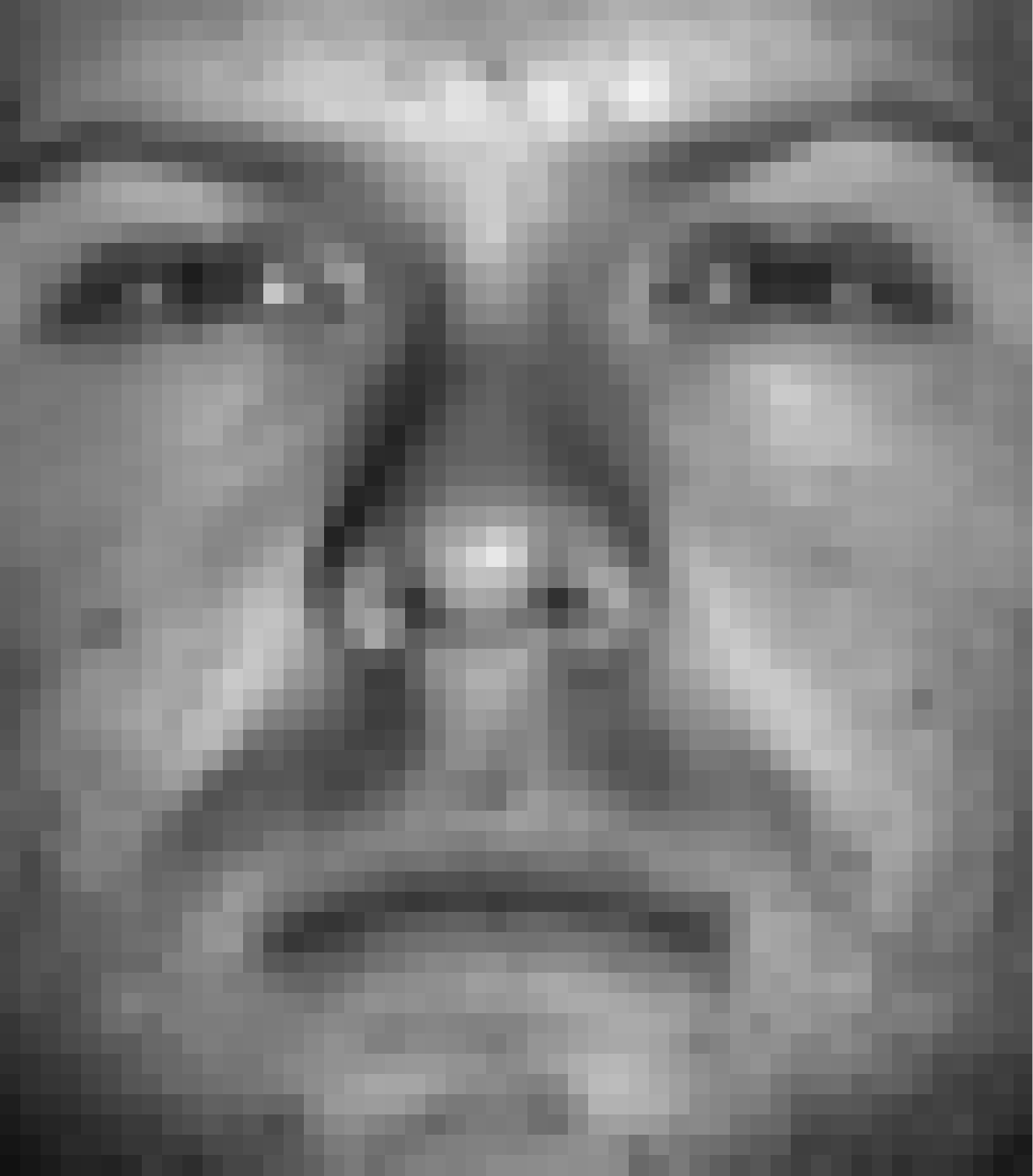}
\includegraphics[width=0.055\textwidth]{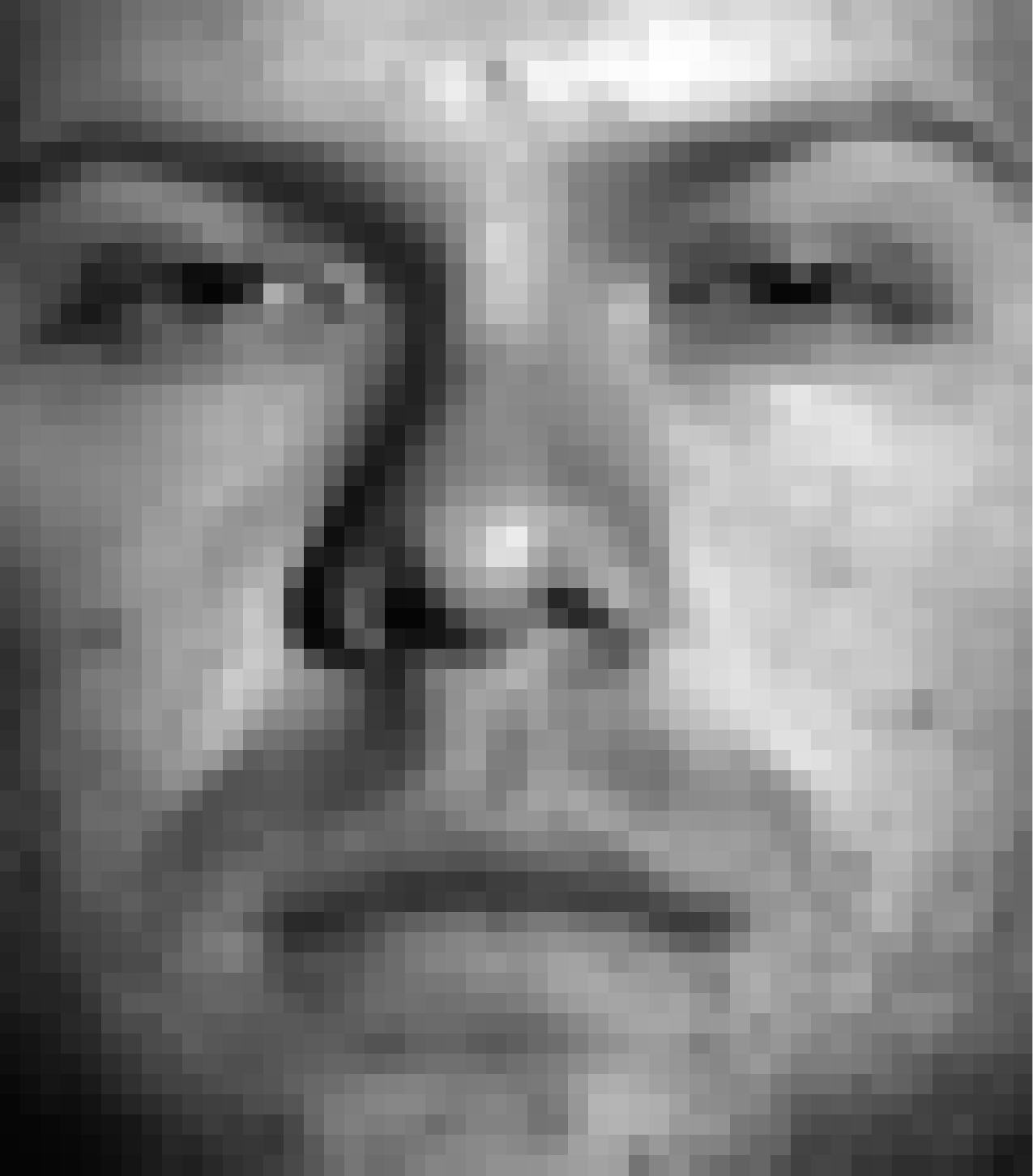}
\includegraphics[width=0.055\textwidth]{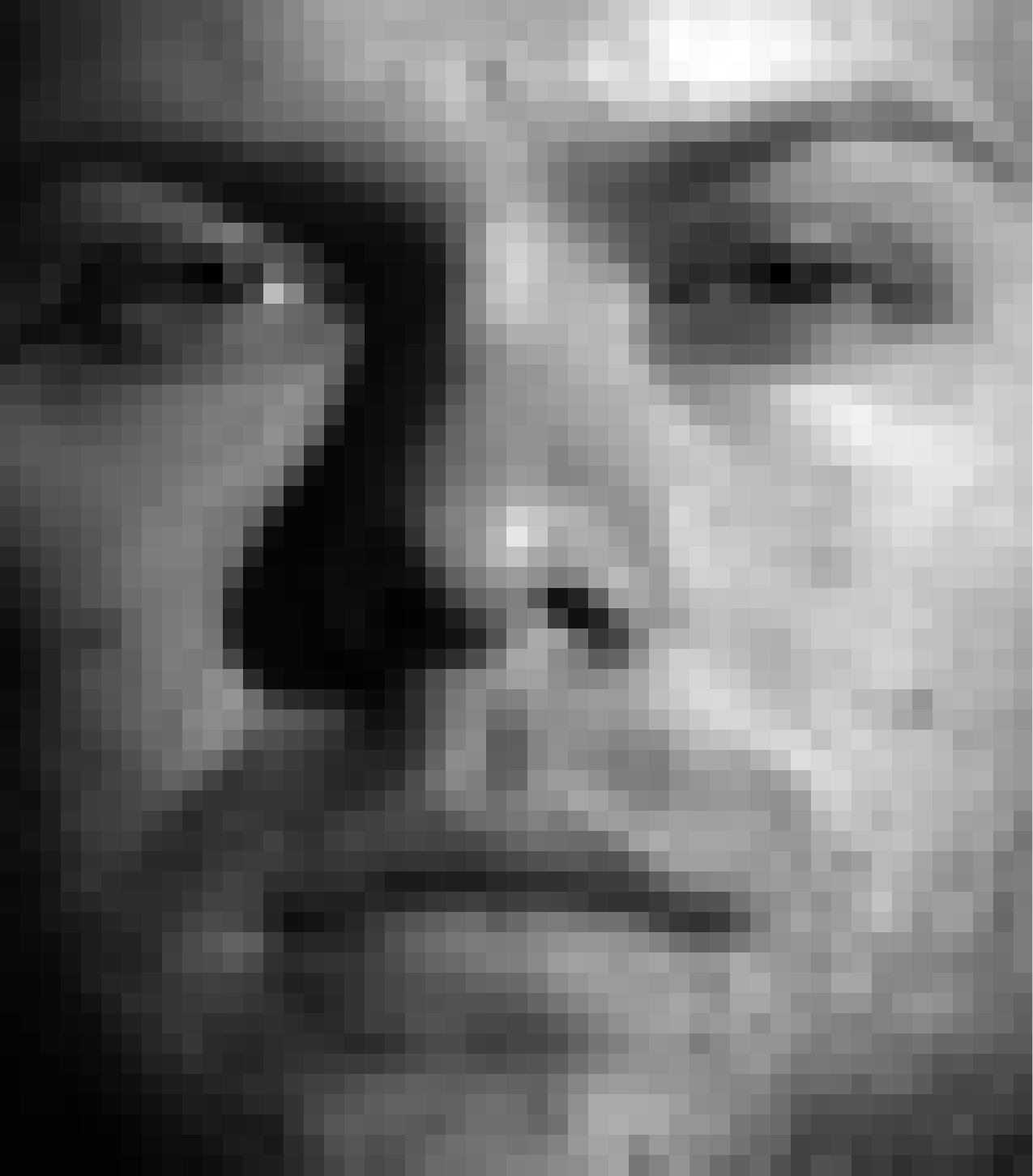}
\includegraphics[width=0.055\textwidth]{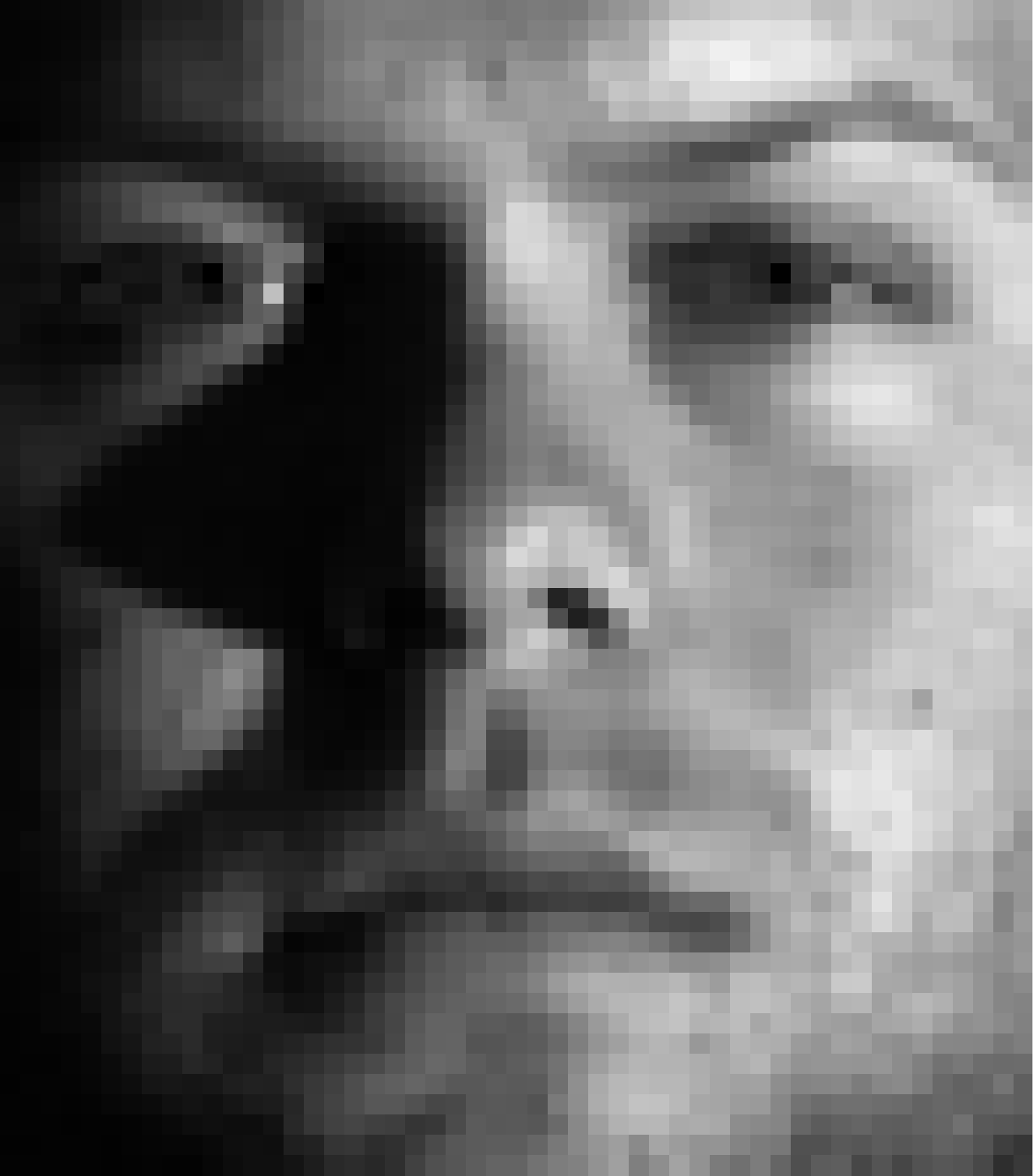}
\includegraphics[width=0.055\textwidth]{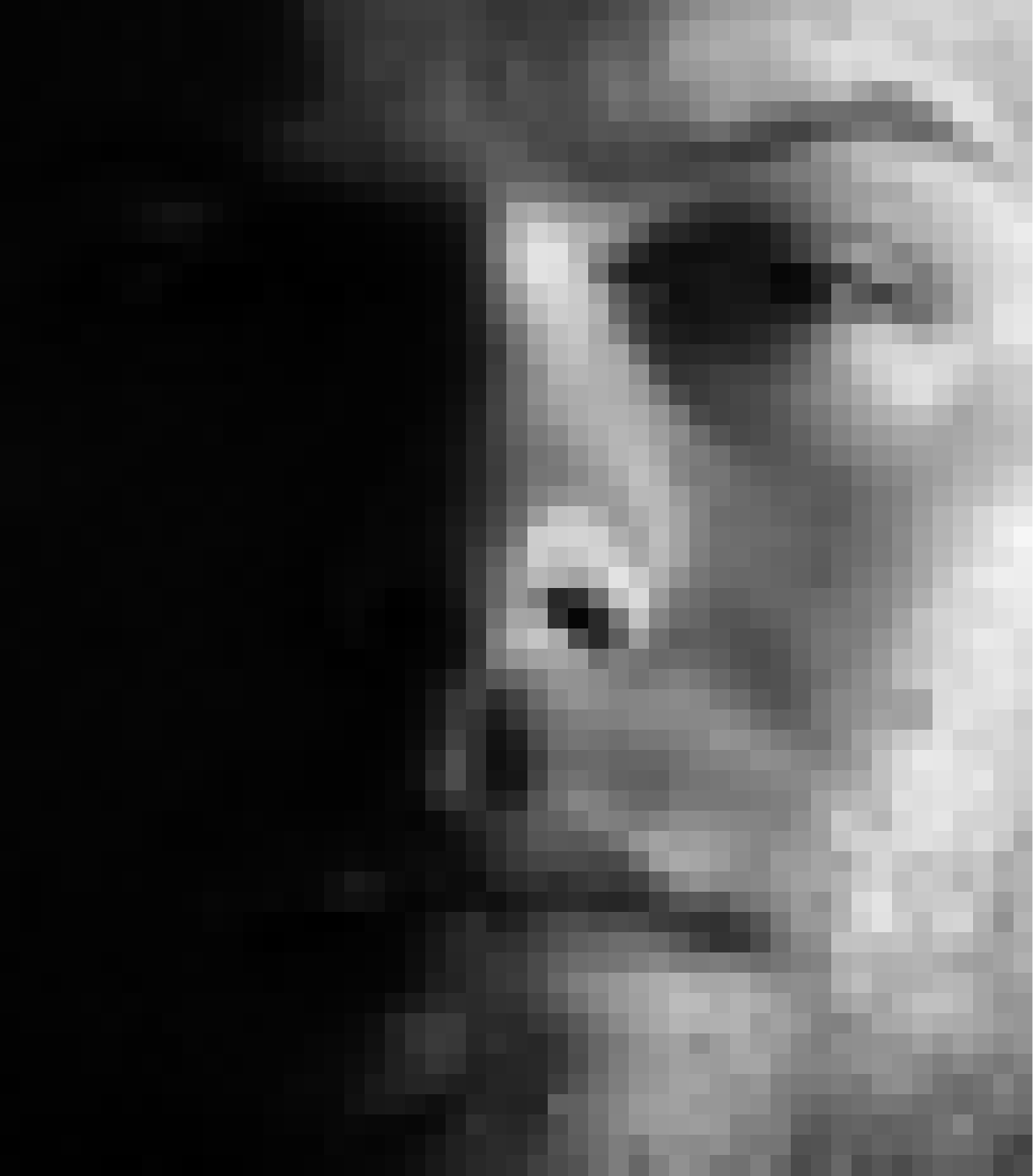}
\includegraphics[width=0.055\textwidth]{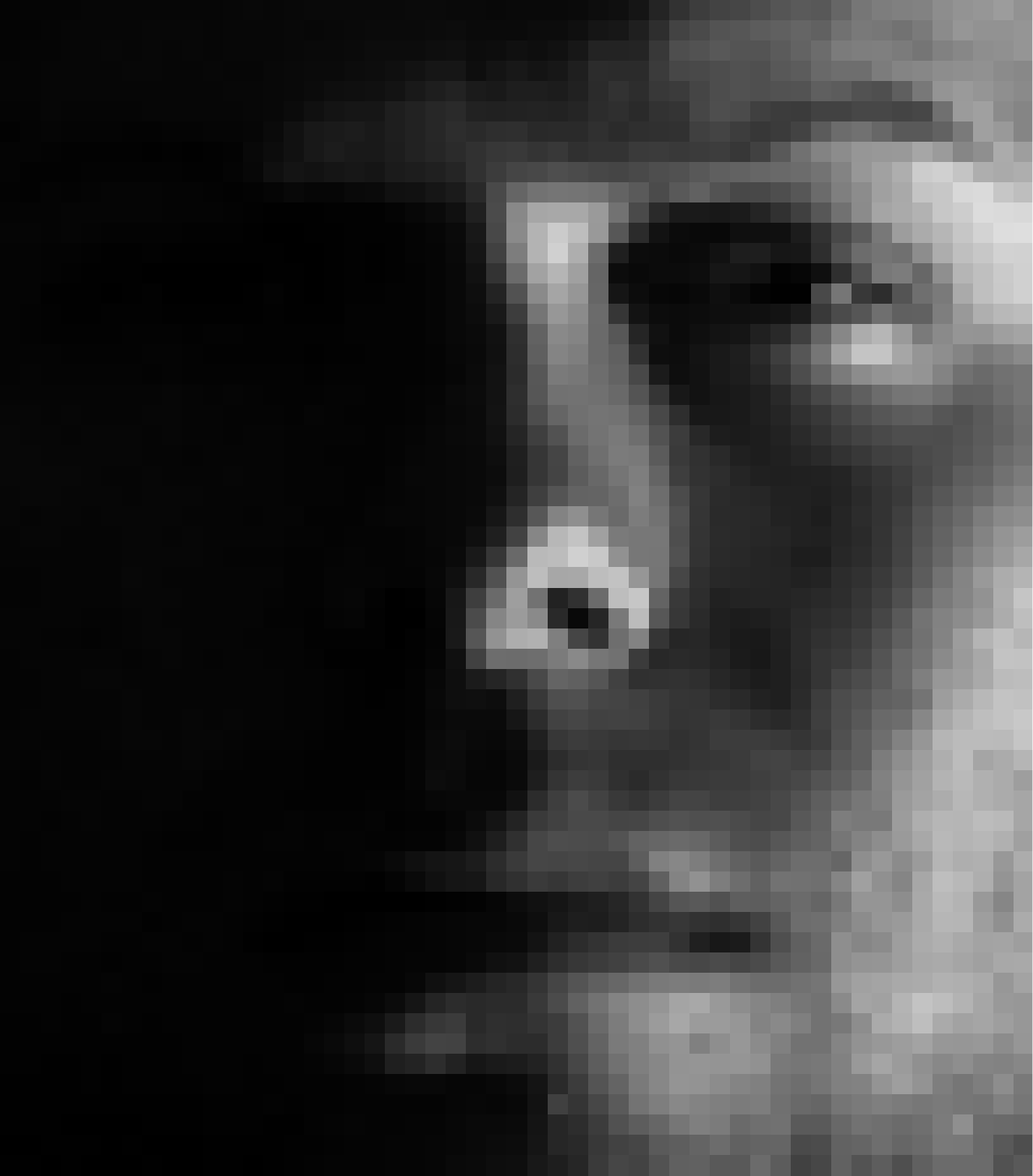}
\includegraphics[width=0.055\textwidth]{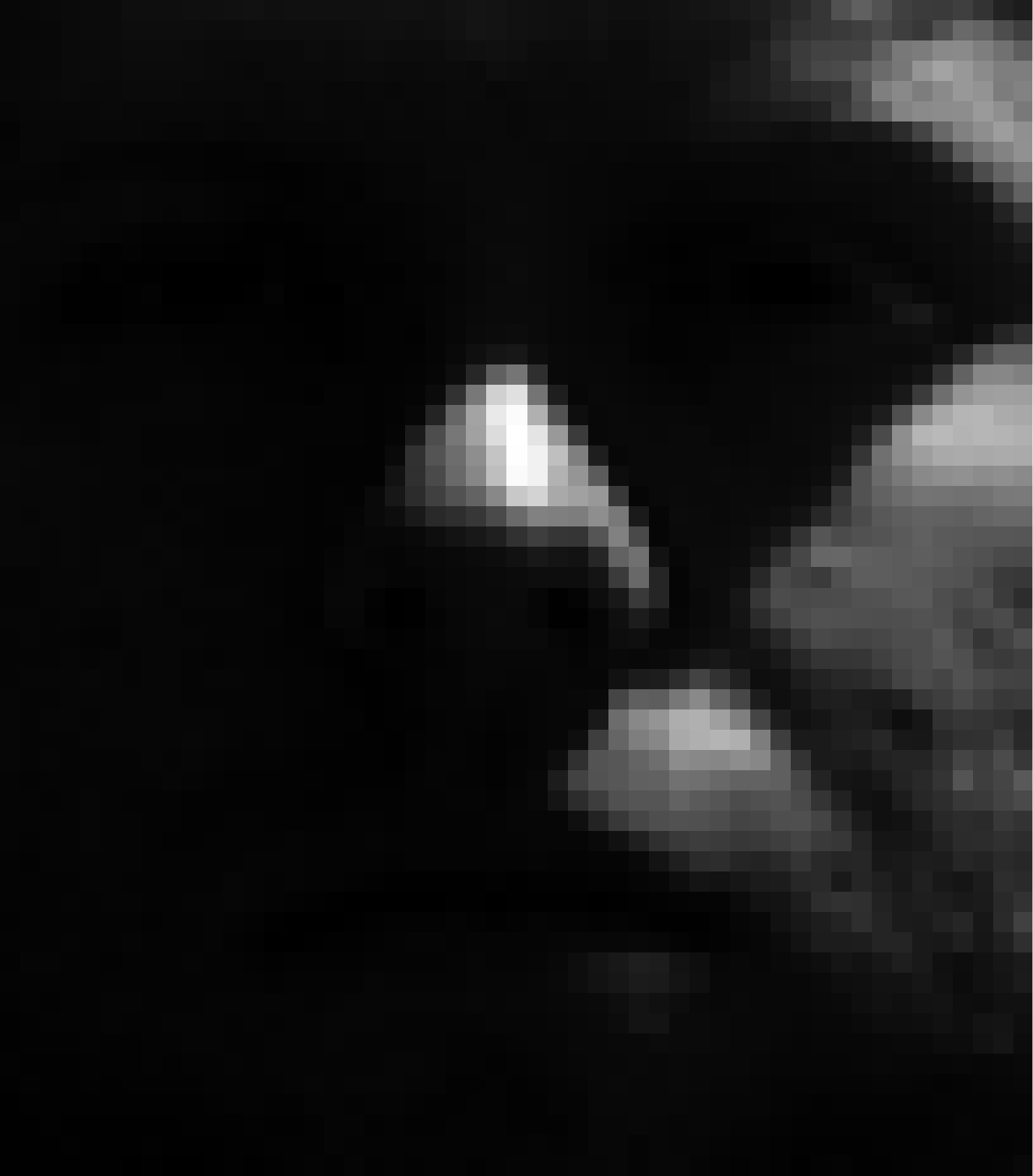}
\end{figure}

We uniformly randomly pick 54 illuminations of images for training and use the remaings for testing. For the training data, we further remove $100(1-\text{SR})$ percent of pixels uniformly at random and then apply the three algorithms to the incomplete training data to get its approximate HOSVD with core size $38\times54\times r$. 
Assume the factors of the approximate HOSVD to be $(\hat{\bm{\cC}},\hat{\vA})$. Then $\hat{\vA}_3$ approximately spans the dominant $r$-dimensional subspace of the pixel domain, and $\hat{\vB}_3=\unfold_3(\hat{\bm{\cC}}\times_1\hat{\vA}_1\times\hat{\vA}_2)$ contains the coefficients of images in the training set. For each image $\vx$ in the testing set, we compare the coefficient vector $\hat{\vA}_3^\top\vx$ to each column of $\hat{\vB}_3$ and classify $\vx$ to be the subject corresponding to the closest column. We vary SR among $\{10\%,30\%,50\%,70\%\}$ and $r$ among $\{50,75,100\}$. For each pair of SR and $r$, we repeat the whole process (i.e., randomly choosing training dataset, removing pixels, obtaining approximate HOSVD, and performing classification) 3 times independently and take the average of the classification accuracies (i.e., the ratio of the number of correctly recognized face images over the total testing images) and running time. We compare the tensor face recognition methods to EigenFace, a popular matrix face recognition method, which reshapes each face image into a vector, forms a matrix with each column being a reshaped face image, and then performs eigendecomposition to find a basis for recognition. Table \ref{table:facereg} shows the average results of each algorithm and also accuracies by EigenFace using incomplete training data as a baseline. From the table, we see that all the three algorithms give similar classification accuracies since they solve similar models, and iHOOI performs slightly better than ALSaS and WTucker in most cases. In addition, ALSaS is the fastest while WTucker costs much more time than that by ALSaS and also iHOOI in every case. Note that larger $r$ gives higher accuracies except at $\text{SR}=10\%$ 
because too few observations and large $r$ cause overfitting problem.

\begin{table}\caption{Average classification accuracies (in percentage) and running time (in second) of ALSaS, iHOOI, and WTucker on the extended Yale Database B for face recognition. The numbers in the parenthesis denote the corresponding average running time. The highest accuracies for each pair of SR and $r$ are highlighted in \textbf{bold}.}\label{table:facereg}
\centering
\resizebox{\textwidth}{!}{
\begin{tabular}{|c|cccc|cccc|cccc|}
\hline
& \multicolumn{4}{|c|}{$r=50$} & \multicolumn{4}{|c|}{$r=75$} & \multicolumn{4}{|c|}{$r=100$} \\\hline
SR & ALSaS & iHOOI & WTucker &EigenFace & ALSaS & iHOOI & WTucker &EigenFace & ALSaS & iHOOI & WTucker &EigenFace\\\hline
10\% & 64.12(515) & \textbf{66.31}(1606)  & 63.33(4874) & 4.65 & 62.46(507) & \textbf{65.88}(1506) & 57.28(6366) & 5.35 & 59.47(497) & \textbf{63.51}(1396) & 55.43(5315) &7.46\\ 
 30\% & 73.42(144) & 73.60(197) & \textbf{73.95}(2566) & 11.84 & 78.42(166) & \textbf{78.86}(246) & 78.25(2833) & 17.98 & 80.70(207) & \textbf{80.79}(401) & 80.35(3164) & 21.32\\
 50\% & 71.67(120) & 72.02(164) & \textbf{72.37}(2171) & 21.14 & \textbf{77.98}(136) & \textbf{77.98}(180) & 77.72(2331) & 31.32 & \textbf{81.05}(166) & 80.79(207) & \textbf{81.05}(2457) & 37.98\\
 70\% &76.84(108) & \textbf{76.93}(157) & 75.96(1894) & 57.72 & \textbf{81.49}(131) & 81.40(171) & 81.23(2038) & 65.44 &\textbf{83.60}(155) &83.42(200) & 83.42(2111) & 69.30\\\hline
\end{tabular}}
\end{table}

\subsection{Low-rank tensor completion}
Upon recovering factors $\bm{\cC}$ and $\vA$, one can easily estimate missing entries of the underlying tensor $\bm{\cM}$. When $\bm{\cM}$ has low multilinear rank, it can be exactly reconstructed under certain conditions (see \cite{huang2014provable, yuan2014tensor} for example). In this subsection, we test ALSaS and iHOOI on reconstructing (approximately) low-multilinear-rank tensors and compare them to WTucker and two other state-of-the-art methods: TMac \cite{tmac2015} and geomCG \cite{kressner2013low}. We choose TMac and geomCG for comparison because their codes are publicly available and also they have been shown superior over several other tensor-completion methods including FaLRTC \cite{liu2013tensor} and SquareDeal \cite{mu2013square}.  
TMac is an alternating least squares method for solving the so-called parallel matrix factorization model:
\begin{equation}\label{eq:tmac}
\begin{array}{l}
\underset{\vX,\vY,\bm{\cZ}}\min \sum_{n=1}^N\alpha_n\|\vX_n\vY_n^\top-\unfold_n(\bm{\cZ})\|_F^2,\\[0.1cm]
 \st \cP_\Omega(\bm{\cZ}) = \cP_\Omega(\bm{\cM}),\, \vX_n\in\RR^{m_n\times r_n}, \vY_n\in\RR^{(\Pi_{i\neq n}m_i)\times r_n}, \forall n,
\end{array}
\end{equation}
and geomCG is a Riemannian conjugate gradient method for
\begin{equation}\label{eq:geom}
\min_{\bm{\cX}}\frac{1}{2}\|\cP_\Omega(\bm{\cX})-\cP_\Omega(\bm{\cM})\|_F^2, \st \rankk\big(\unfold_n(\bm{\cX})\big)=r_n,\, n=1,\ldots,N.
\end{equation}
In \eqref{eq:tmac}, $\sum_n\alpha_n=1$, and $\alpha_n$ acts as a weight on the $n$-th mode fitting and can be adaptively updated. Usually, better low-rankness property leads to better data fitting, and larger $\alpha_n$ is put. Since all tensors tested in this subsection are balanced and have similar low-rankness along each mode, we simply fix $\alpha_n=\frac{1}{N},\,\forall n$. Both \eqref{eq:tmac} and \eqref{eq:geom} require estimation on $r_n$'s, which can be either fixed or adaptively adjusted in a similar way as that for ALSaS and iHOOI. 
First, we compare the recoverability of the five methods on randomly generated low-multilinear-rank tensors. Then, we test their accuracies and efficiency on reconstructing a 3D MRI image. 

\subsubsection*{Phase transition plots}  A phase transition plot uses greyscale colors to depict how likely a certain kind of low-multilinear-rank tensors can be recovered by an algorithm for a range of different ranks and sample ratios. Phase transition plots are important means to compare the performance of different tensor recovery methods.

We use two random datasets in the test. Each tensor in both sets is $50\times50\times50$ and has the form of $\bm{\cM}=\bm{\cC}\times_1\vA_1\times_2\vA_2\times_3\vA_3$. In the first dataset, entries of $\bm{\cC}$ and $\vA_n$'s follow standard normal distribution: $\bm{\cC}$ is generated by MATLAB command \verb|randn(r,r,r)| and $\vA_n$ by \verb|randn(50,r)| for $n=1,2,3$. In the second dataset, entries of $\bm{\cC}$ follow uniform distribution, and $\vA_n$'s have power-law decaying singular values: $\bm{\cC}$ is generated by command \verb|rand(r,r,r)| and $\vA_n$ by \verb|orth(randn(50,r))*diag([1:r].^(-0.5))| for $n=1,2,3$. Usually, the tensors in the second dataset is more difficult to recover than those in the first one. For both datasets, we generate $\Omega$ uniformly at random, and we vary $r$ from 5 to 35 with increment 3 and SR from 10\% to 90\% with increment 5\%. We regard the recovery $\bm{\cM}^{rec}$ to be successful, if
$$\mathrm{relerr}(\bm{\cM}^{rec}):=\frac{\|\bm{\cM}^{rec}-\bm{\cM}\|_F}{\|\bm{\cM}\|_F}\le 10^{-2}.$$

We apply rank-increasing and rank-fixing strategies to ALSaS, iHOOI, TMac and geomCG, and we append ``-inc'' and ``-fix'' respectively after the name of each algorithm to specify which strategy is applied. The rank-increasing strategy initializes $r_n=1$ for all four algorithms and sets $r_n^{\max}=50,\forall n$ for the former three and $r_n^{\max}=r,\forall n$ for geomCG, and the rank-fixing strategy fixes $r_n=r,\, n=1,2,3$. We test WTucker with true ranks (WTucker-true) by fixing $r_n=r,\forall n$ and also rank over estimation (WTucker-over) by fixing $r_n=r+10,\forall n$. All the algorithms are provided with the same random starting points. For each pair of $r$ and SR, we run all the algorithms on 50 independently generated low-multilinear-rank tensors. To save testing time, we simply regard that if an algorithm succeeds 50 times at some pair of $r$ and SR, it will alway succeeds at this $r$ for larger SR's, and if it fails 50 times at some pair of $r$ and SR, it will never succeed at this SR for larger $r$'s. Figure \ref{fig:phase-3G} depicts the phase transition plot of each method on the Gaussian randomly generated tensors and Figure \ref{fig:phase-3plaw} on random tensors with power-law decaying singular values. From the figures, we see that ALSaS and iHOOI performs comparably well to TMac with both rank-increasing and rank-fixing strategies and also to geomCG with rank-increasing strategy. In addition, even without knowing the true ranks, ALSaS and iHOOI by adaptively increasing rank estimates can perform as well as those by assuming true ranks. Both ALSaS and iHOOI successfully recover more low-multilinear-rank tensors than WTucker, and geomCG with rank-fixing strategy.

\begin{figure}\caption{Phase transition plots of different methods on \textbf{3-way random tensors whose factors have Gaussian random entries}. ALSaS and iHOOI are the proposed methods; TMac \cite{tmac2015} solves \eqref{eq:tmac}; geomCG \cite{kressner2013low} solves \eqref{eq:geom}; WTucker \cite{filipovic2013tucker} solves a model similar to \eqref{eq:main1} without orthogonality constraint on $\vA_n$'s.}\label{fig:phase-3G}
\centering
{\footnotesize
\begin{tabular}{ccccc}
ALSaS-inc & iHOOI-inc & TMac-inc & geomCG-inc & WTucker-over\\
\includegraphics[width=0.17\textwidth]{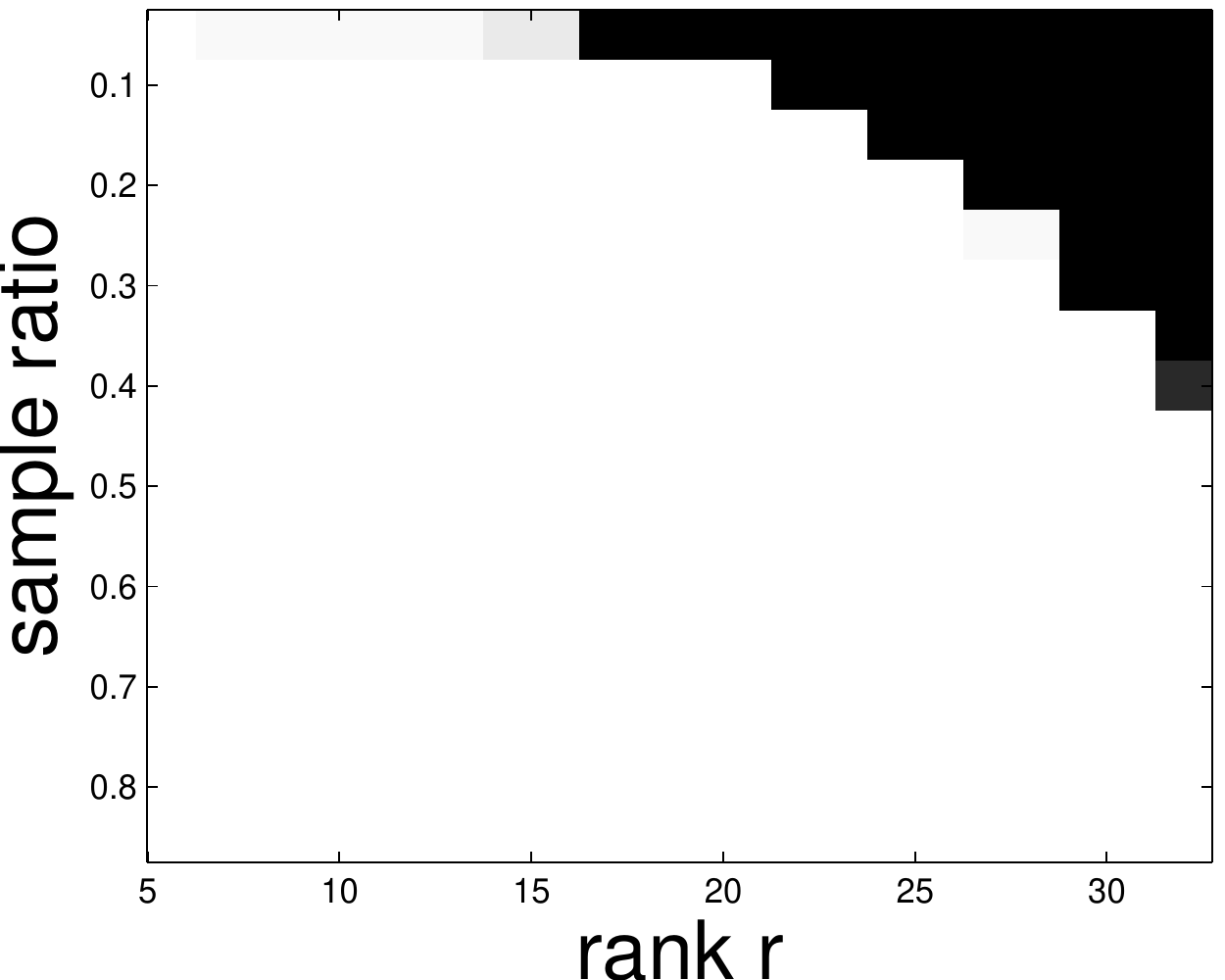} &
\includegraphics[width=0.17\textwidth]{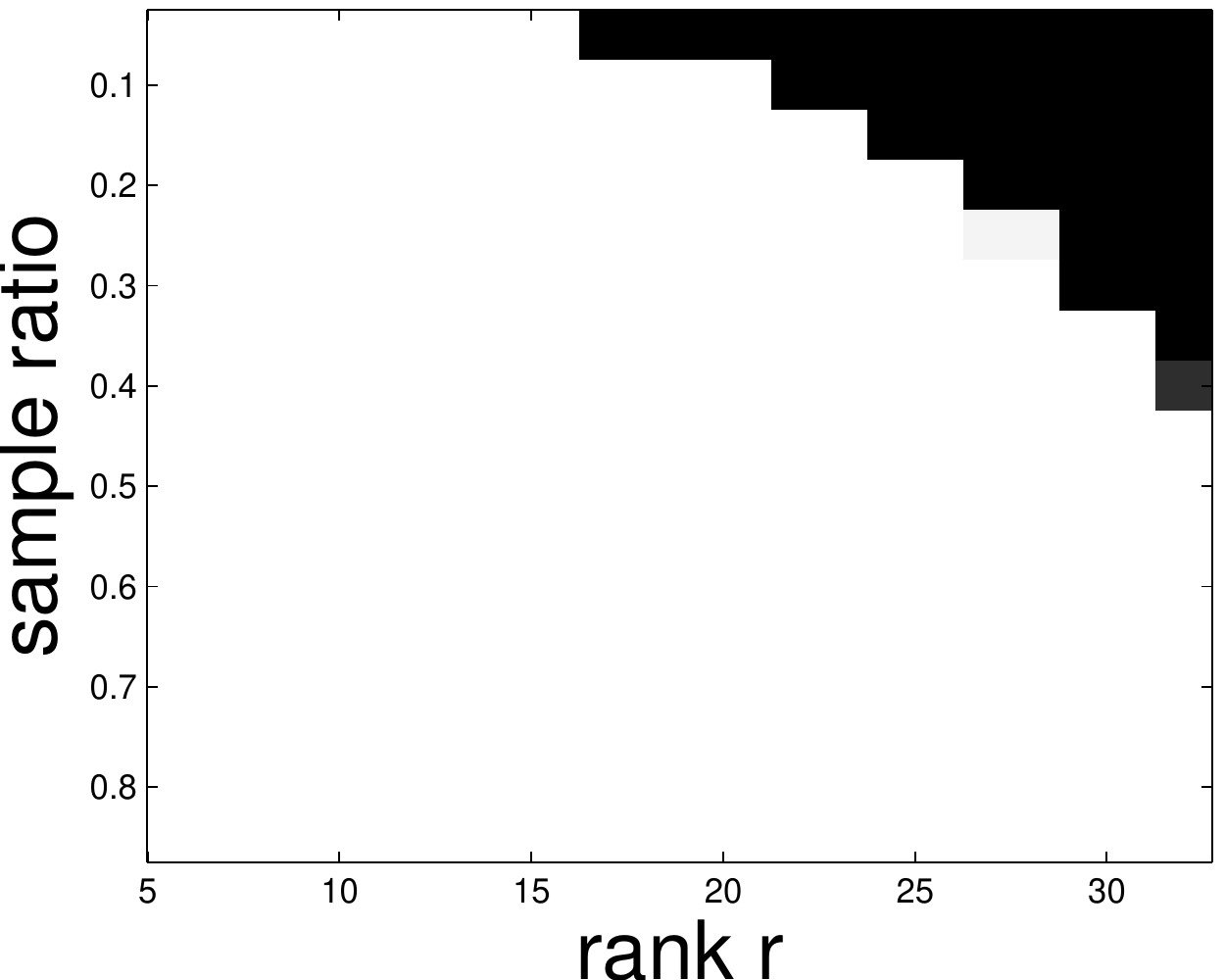} &
\includegraphics[width=0.17\textwidth]{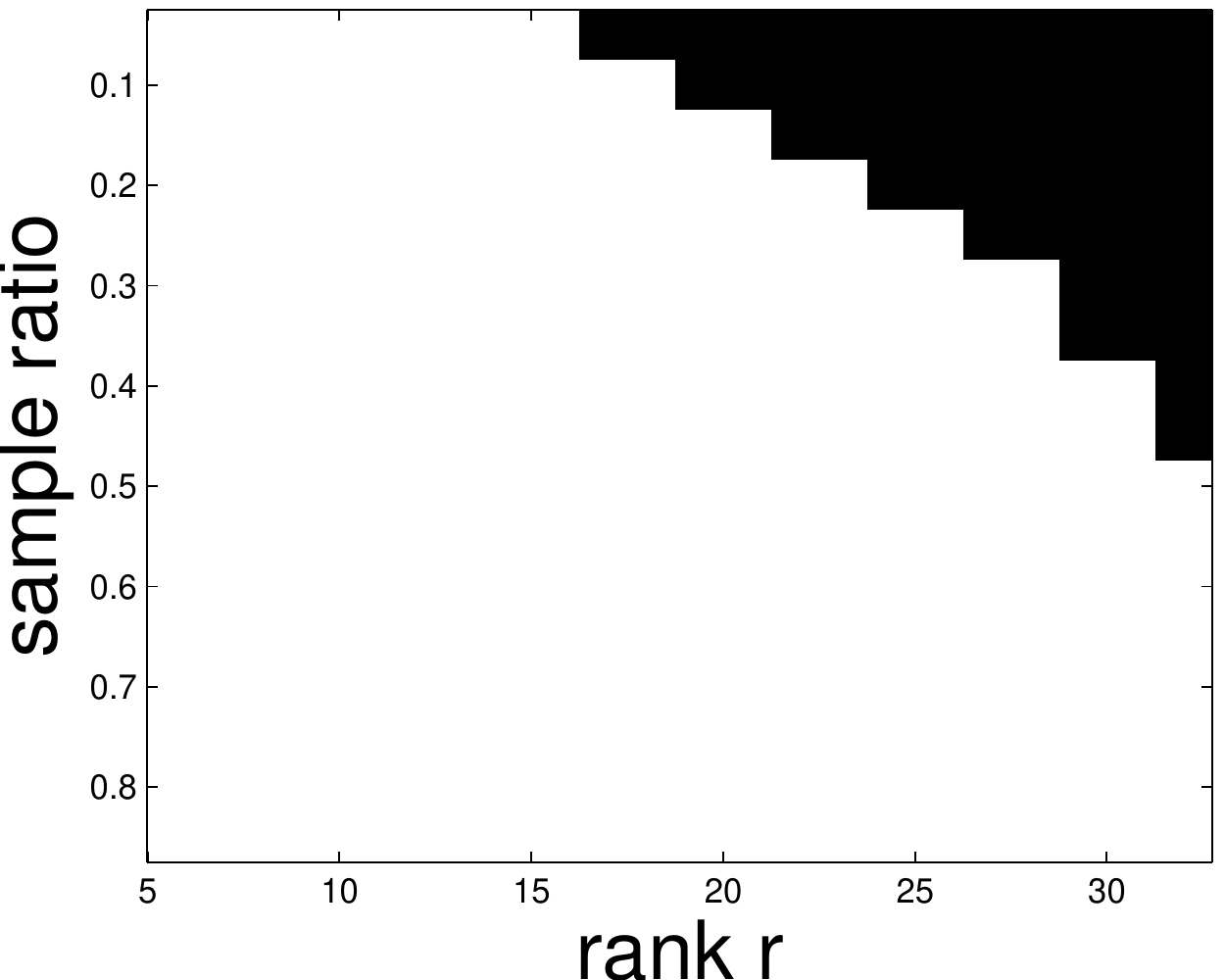} &
\includegraphics[width=0.17\textwidth]{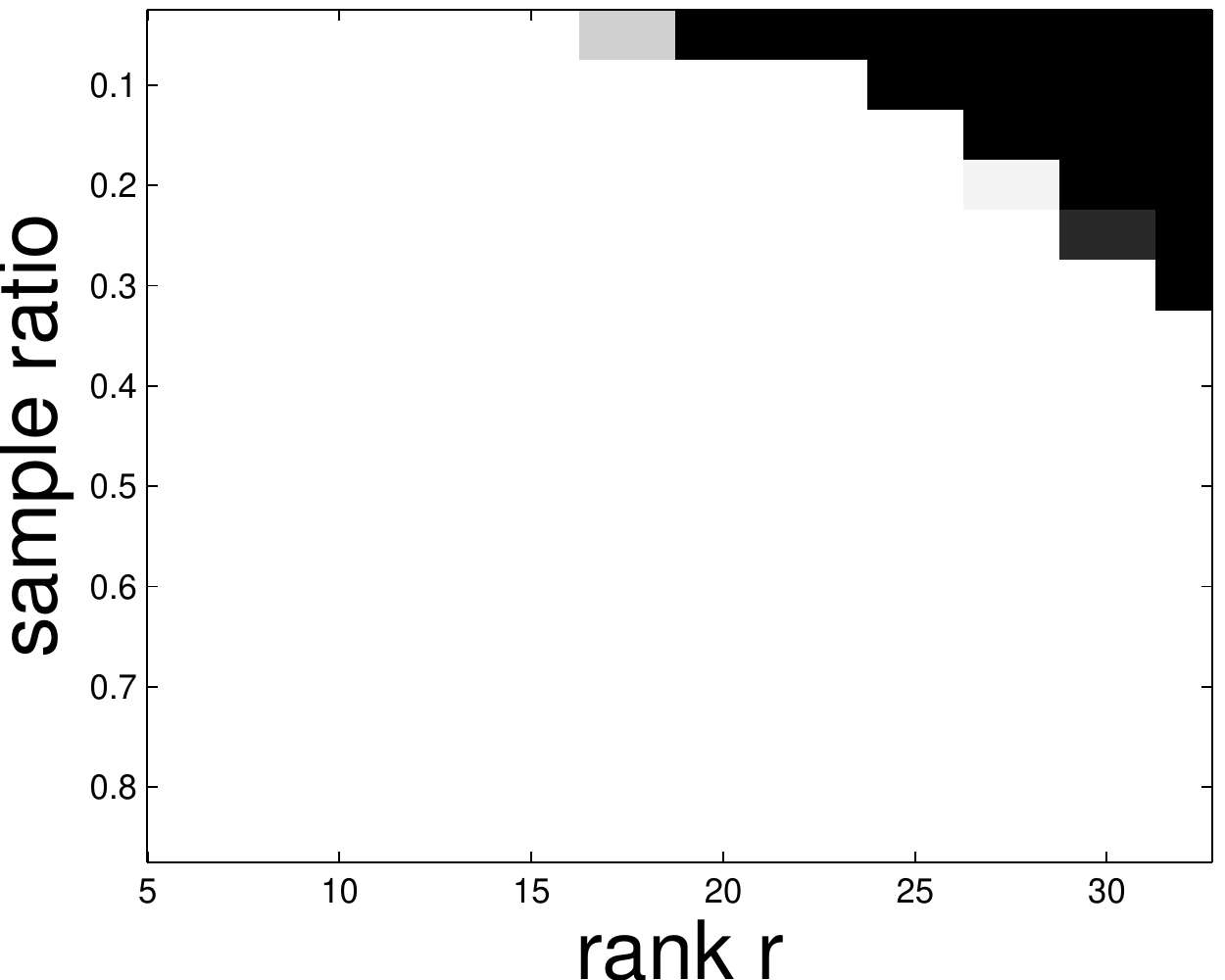} &
\includegraphics[width=0.17\textwidth]{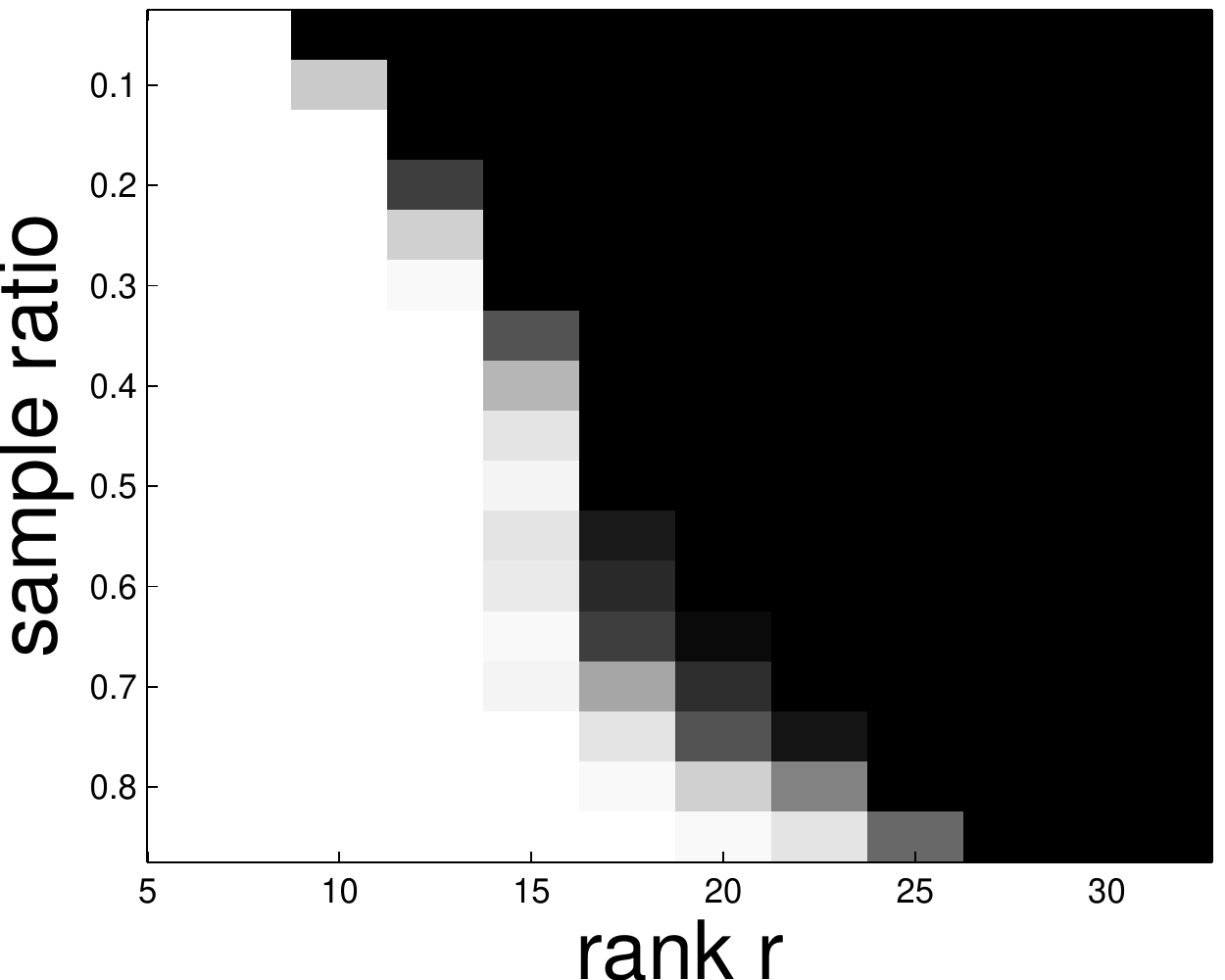}\\
ALSaS-fix & iHOOI-fix & TMac-fix & geomCG-fix & WTucker-true \\
\includegraphics[width=0.17\textwidth]{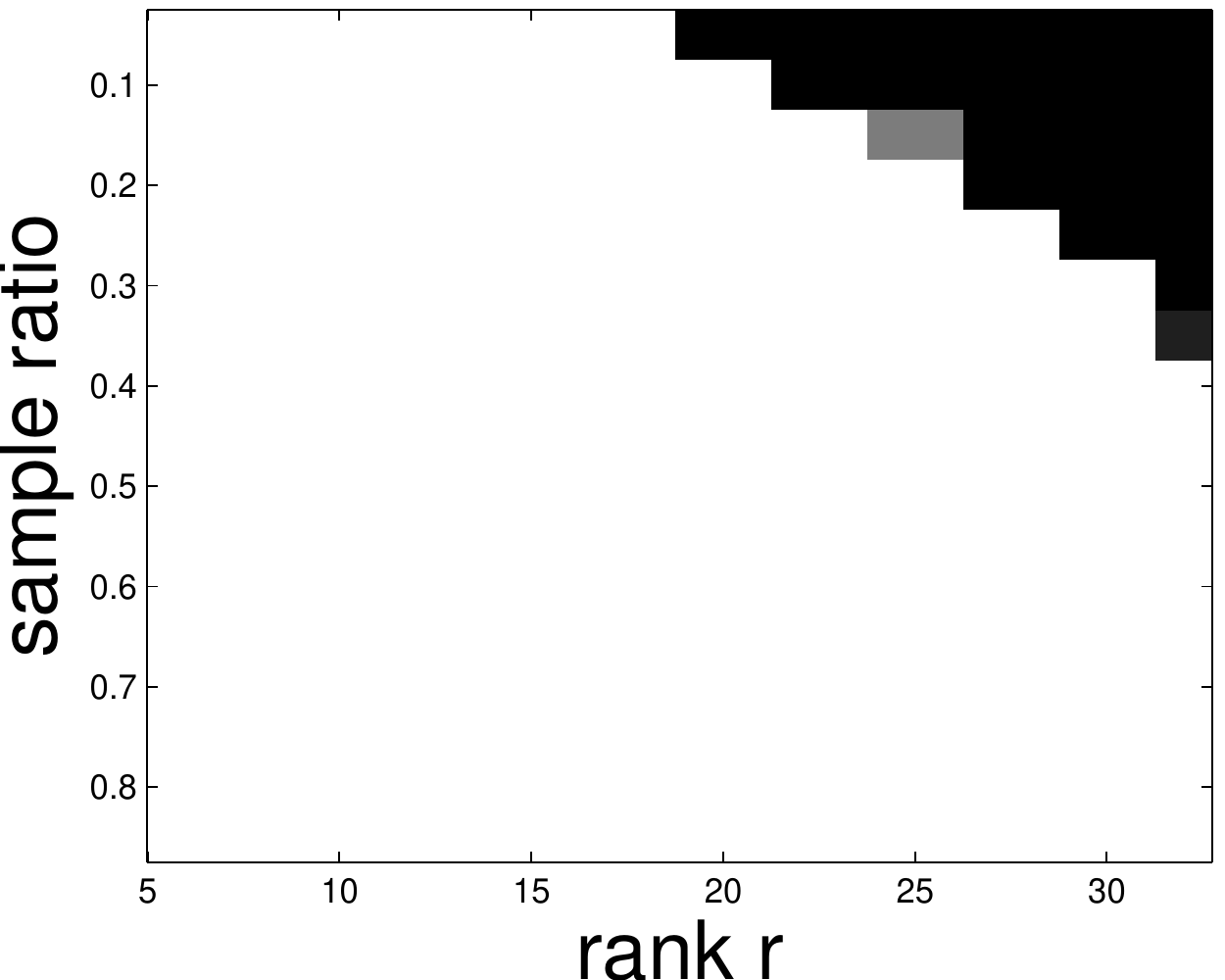} &
\includegraphics[width=0.17\textwidth]{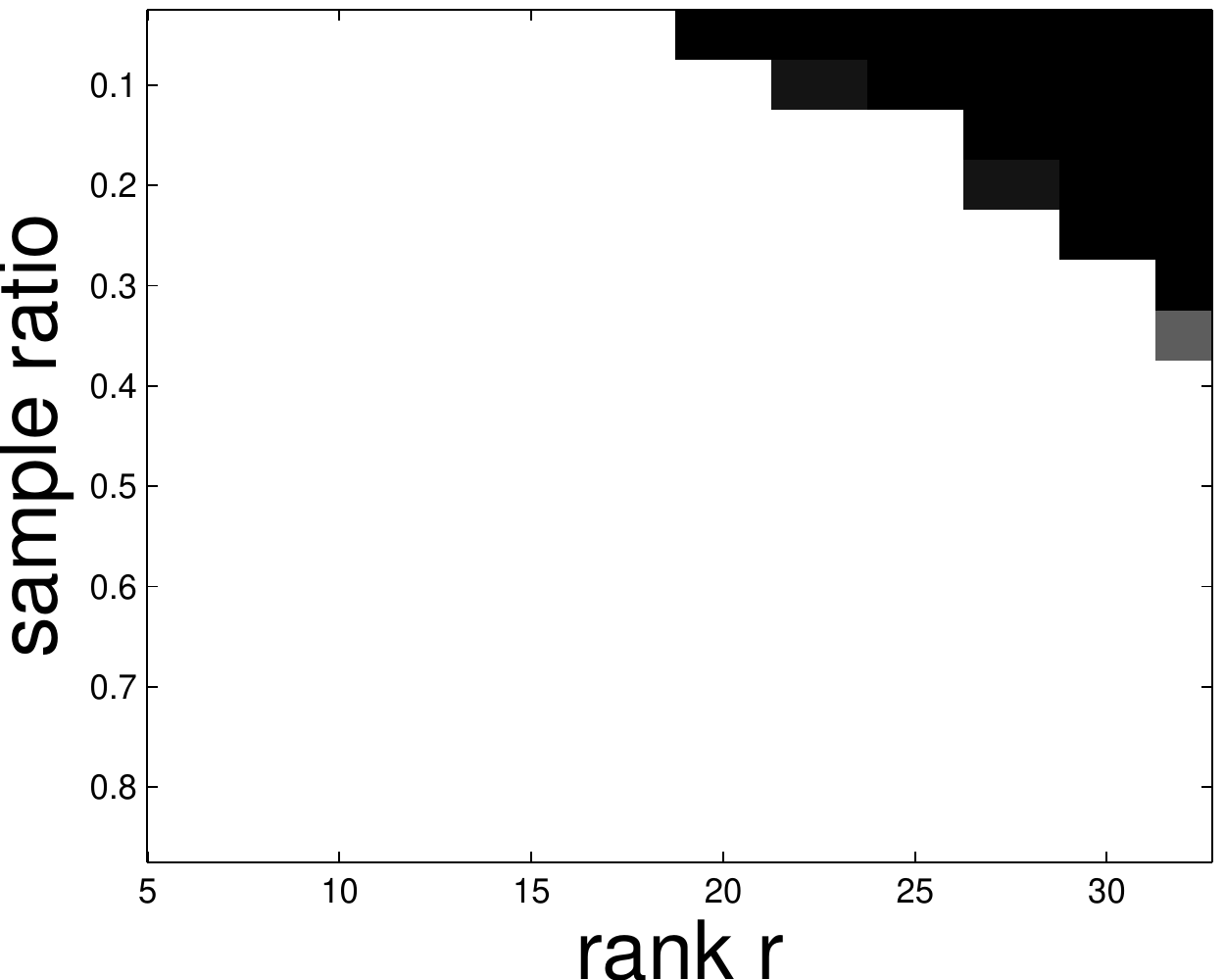} &
\includegraphics[width=0.17\textwidth]{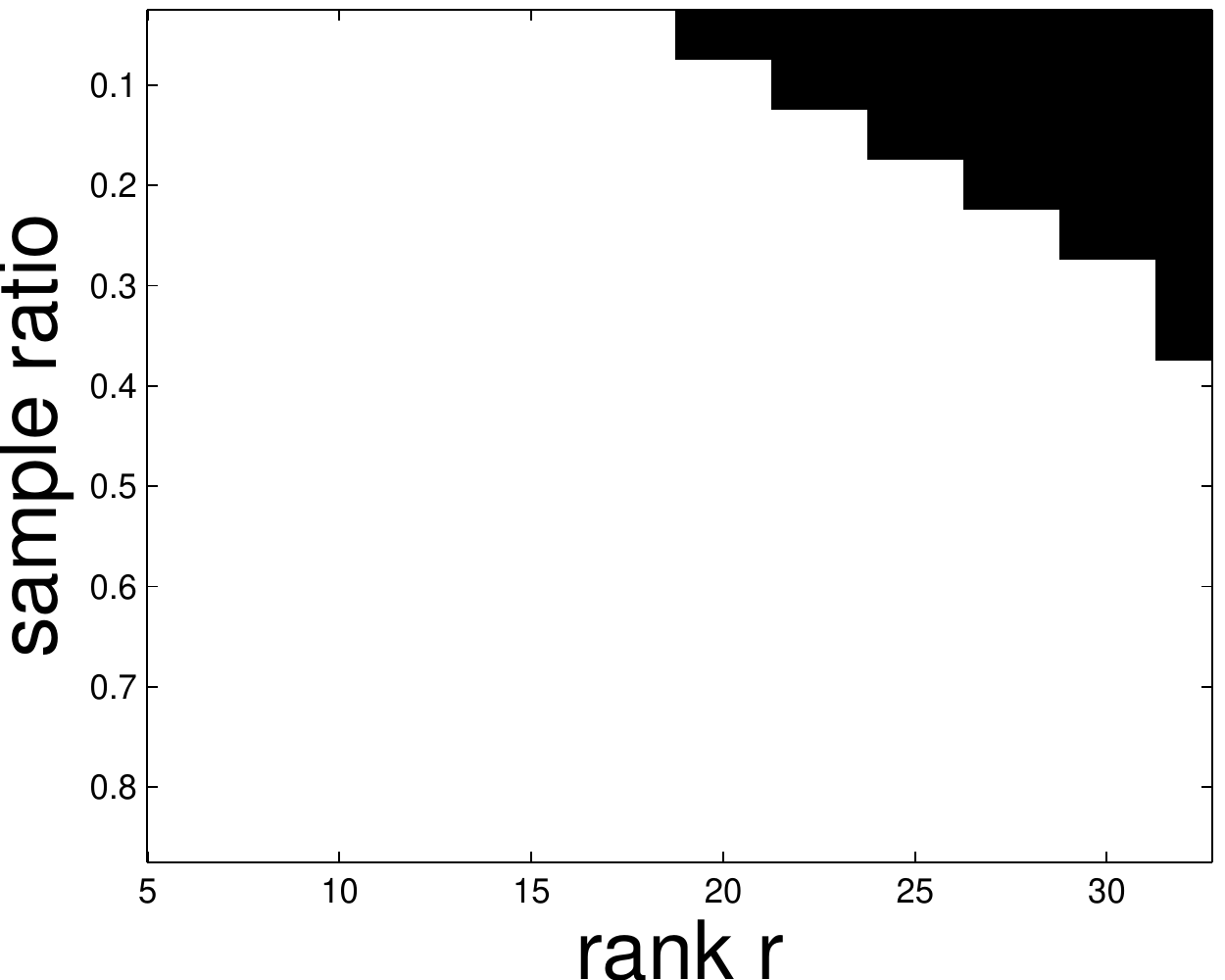} &
\includegraphics[width=0.17\textwidth]{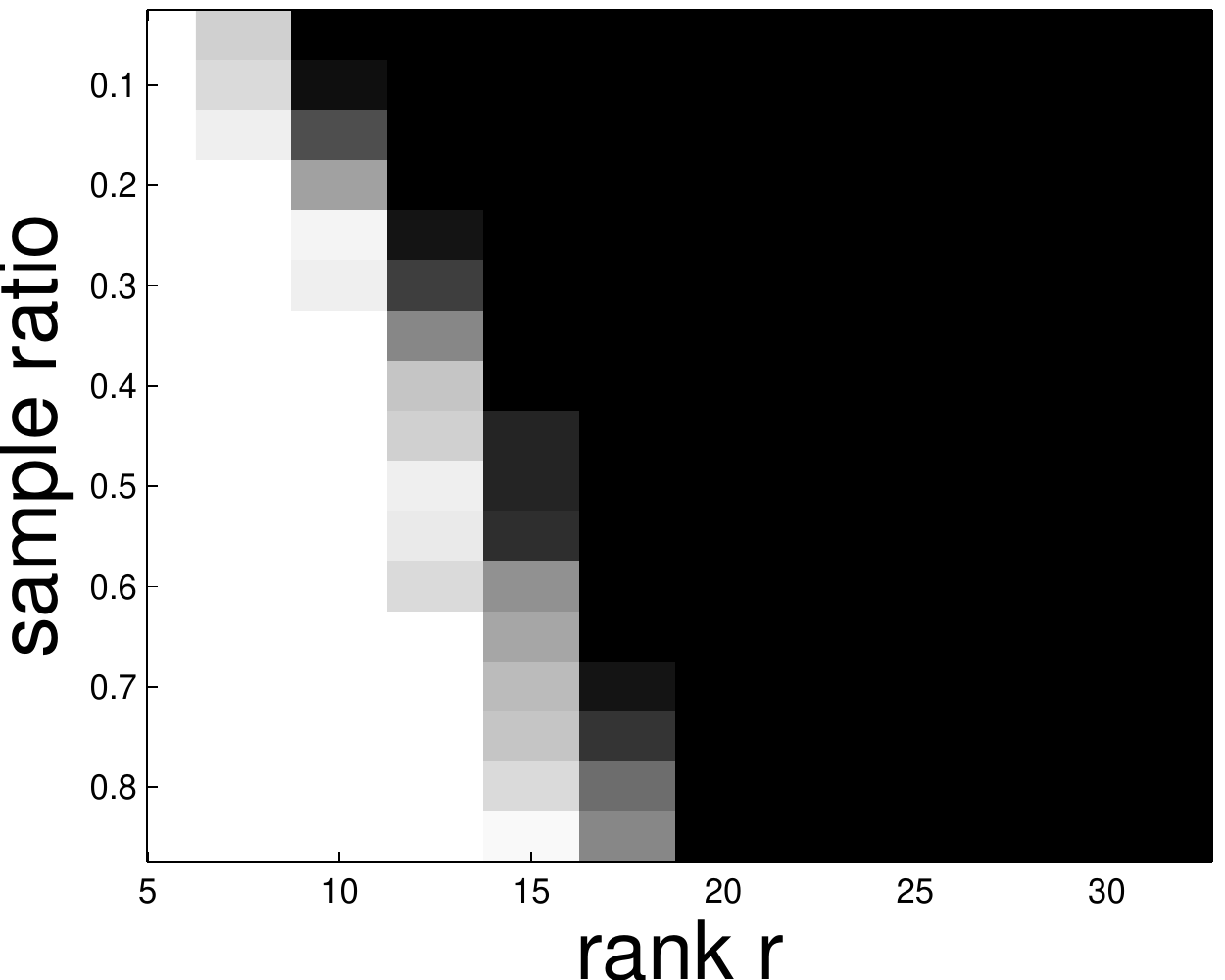} &
\includegraphics[width=0.17\textwidth]{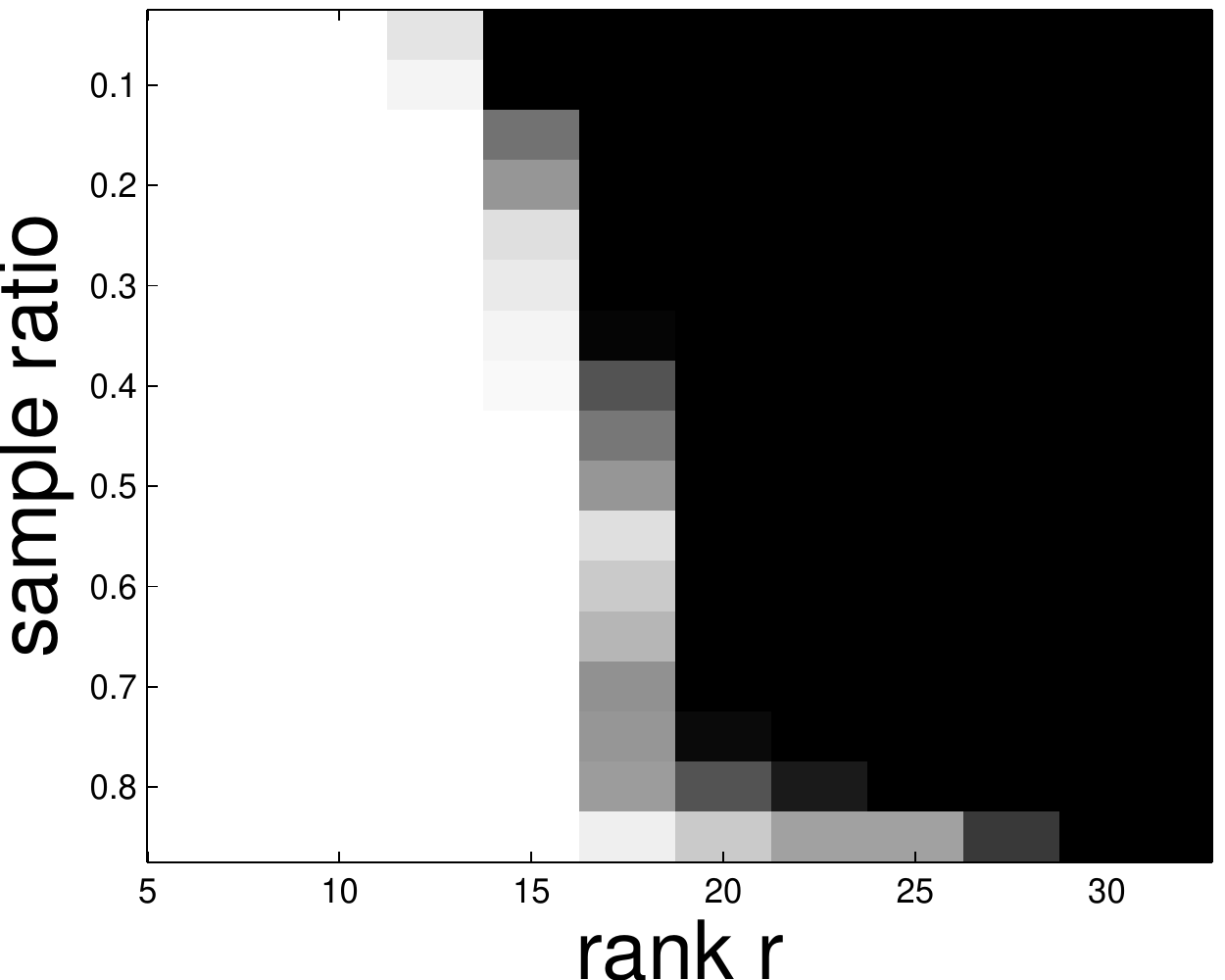}
\end{tabular}}
\end{figure}

\begin{figure}\caption{Phase transition plots of different methods on \textbf{3-way random tensors whose factors have power-law decaying singular values}. ALSaS and iHOOI are the proposed methods; TMac \cite{tmac2015} solves \eqref{eq:tmac}; geomCG \cite{kressner2013low} solves \eqref{eq:geom}; WTucker \cite{filipovic2013tucker} solves a model similar to \eqref{eq:main1} without orthogonality constraint on $\vA_n$'s.}\label{fig:phase-3plaw}
\centering
{\footnotesize
\begin{tabular}{ccccc}
ALSaS-inc & iHOOI-inc & TMac-inc & geomCG-inc & WTucker-over\\
\includegraphics[width=0.17\textwidth]{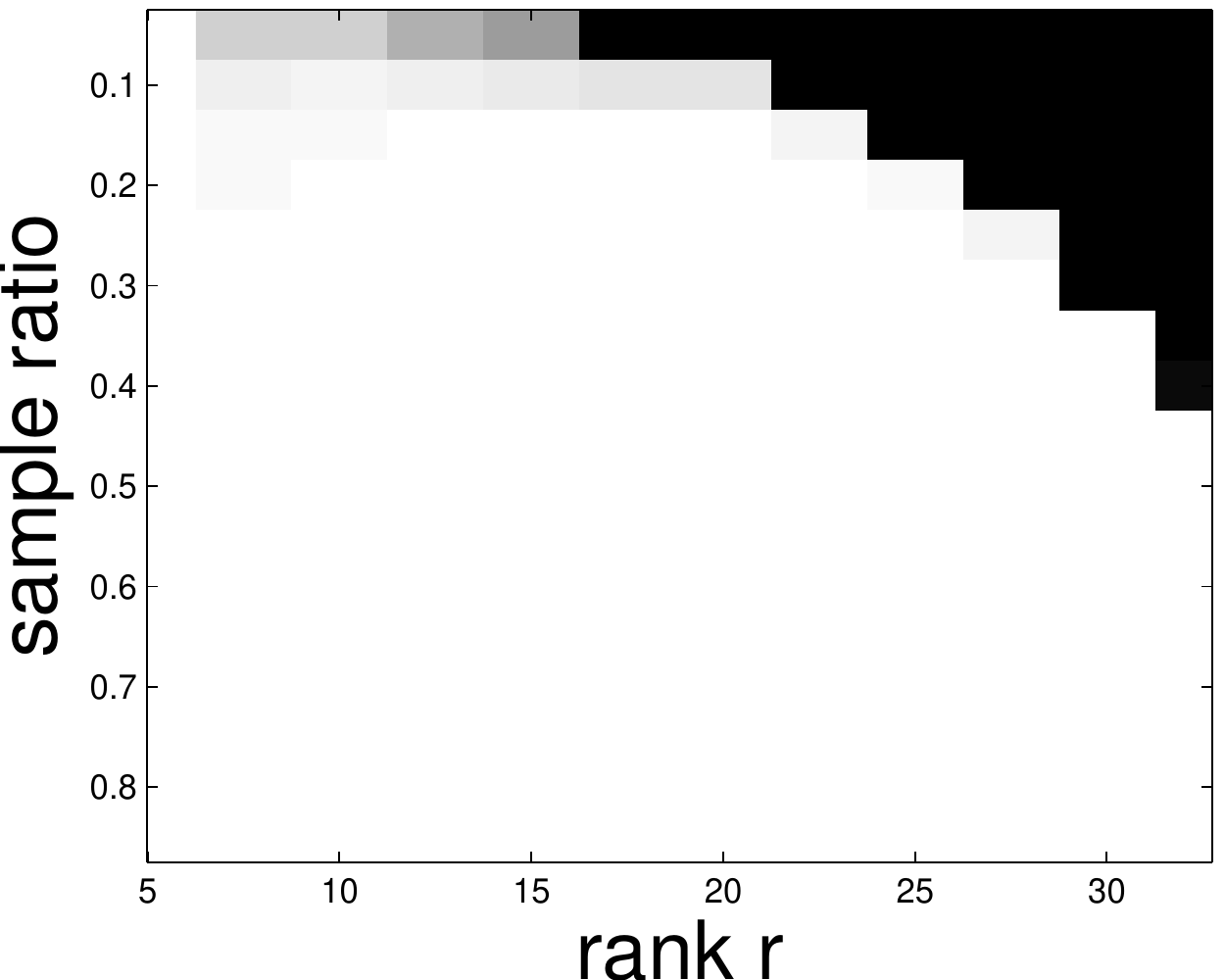} &
\includegraphics[width=0.17\textwidth]{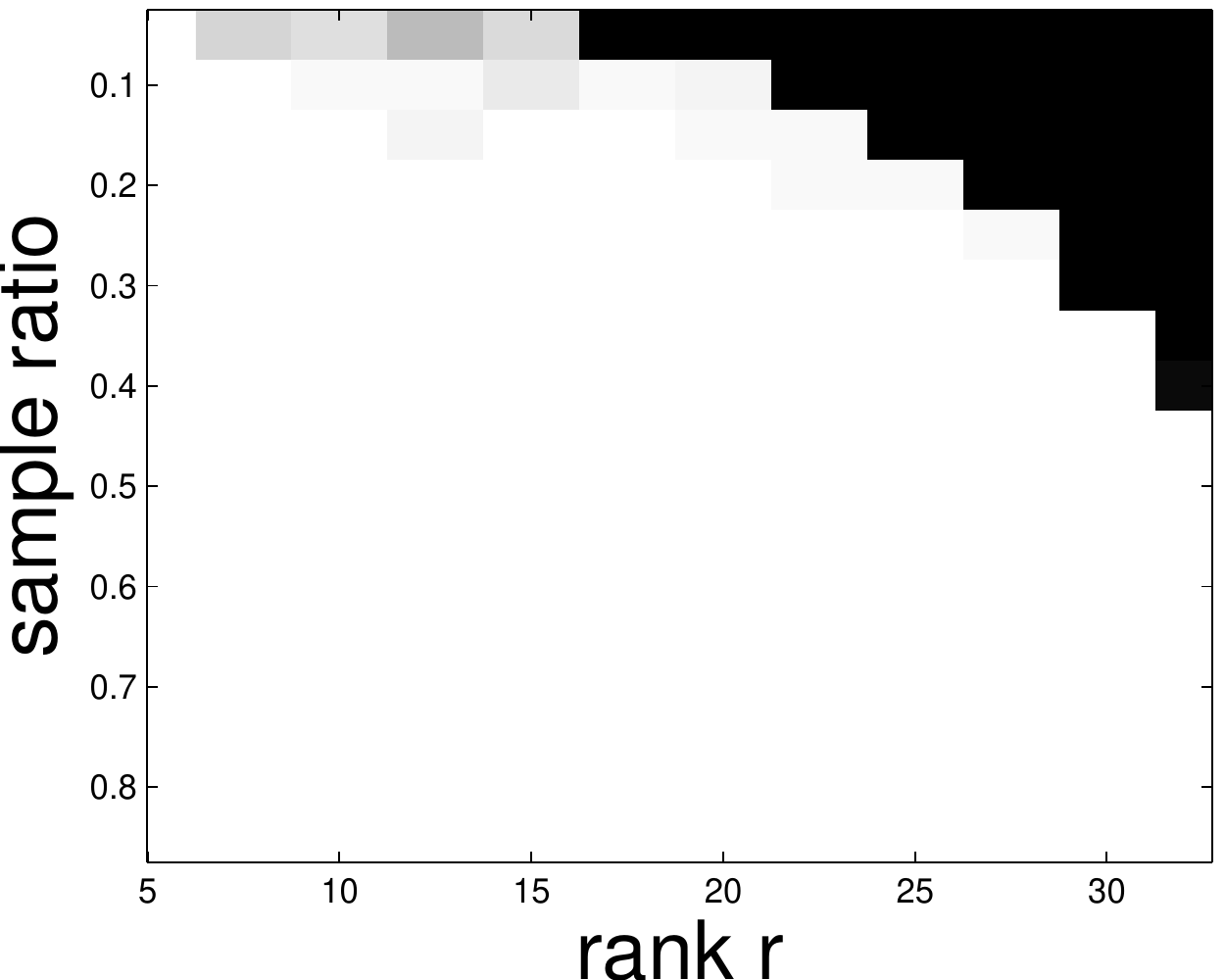} &
\includegraphics[width=0.17\textwidth]{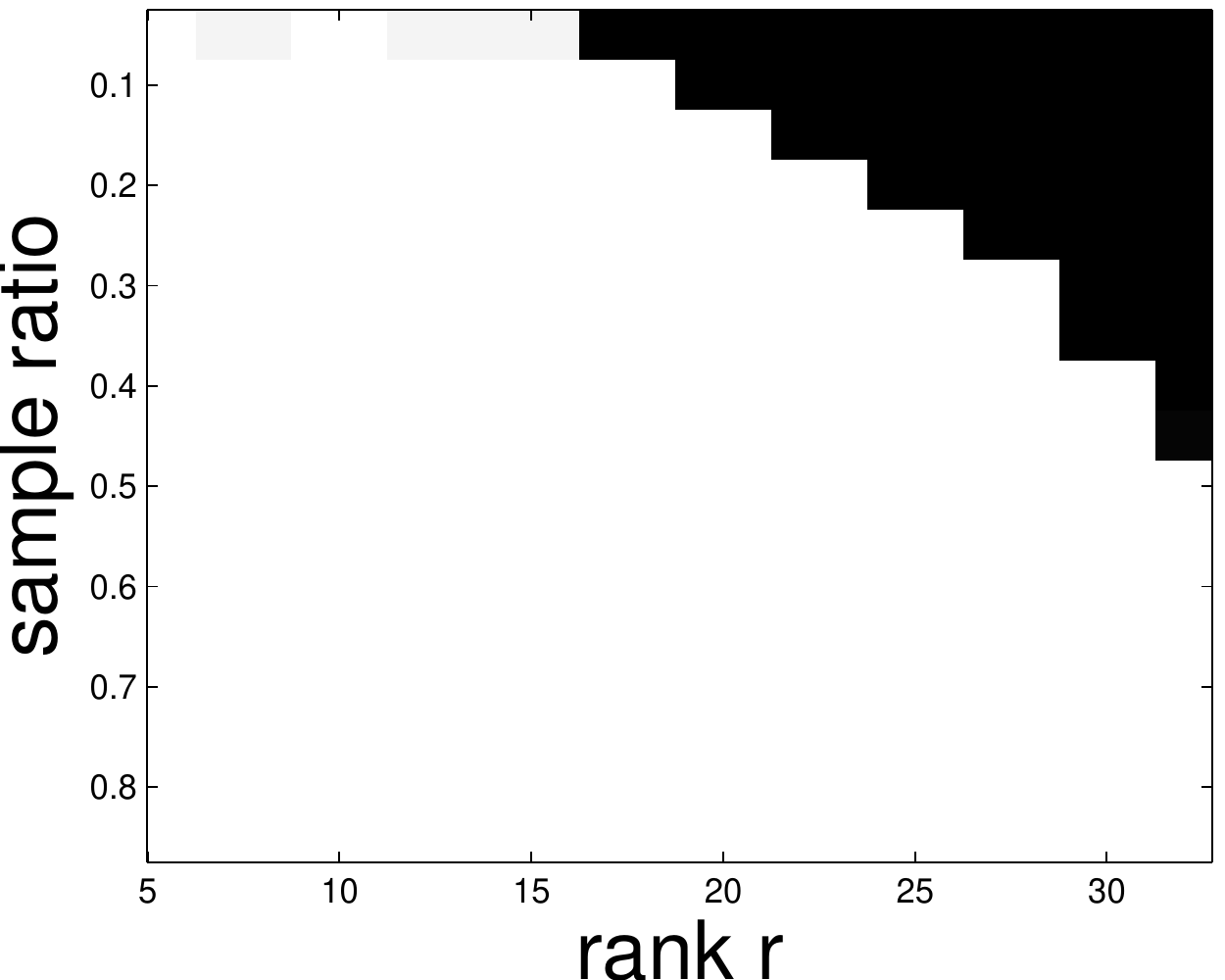} &
\includegraphics[width=0.17\textwidth]{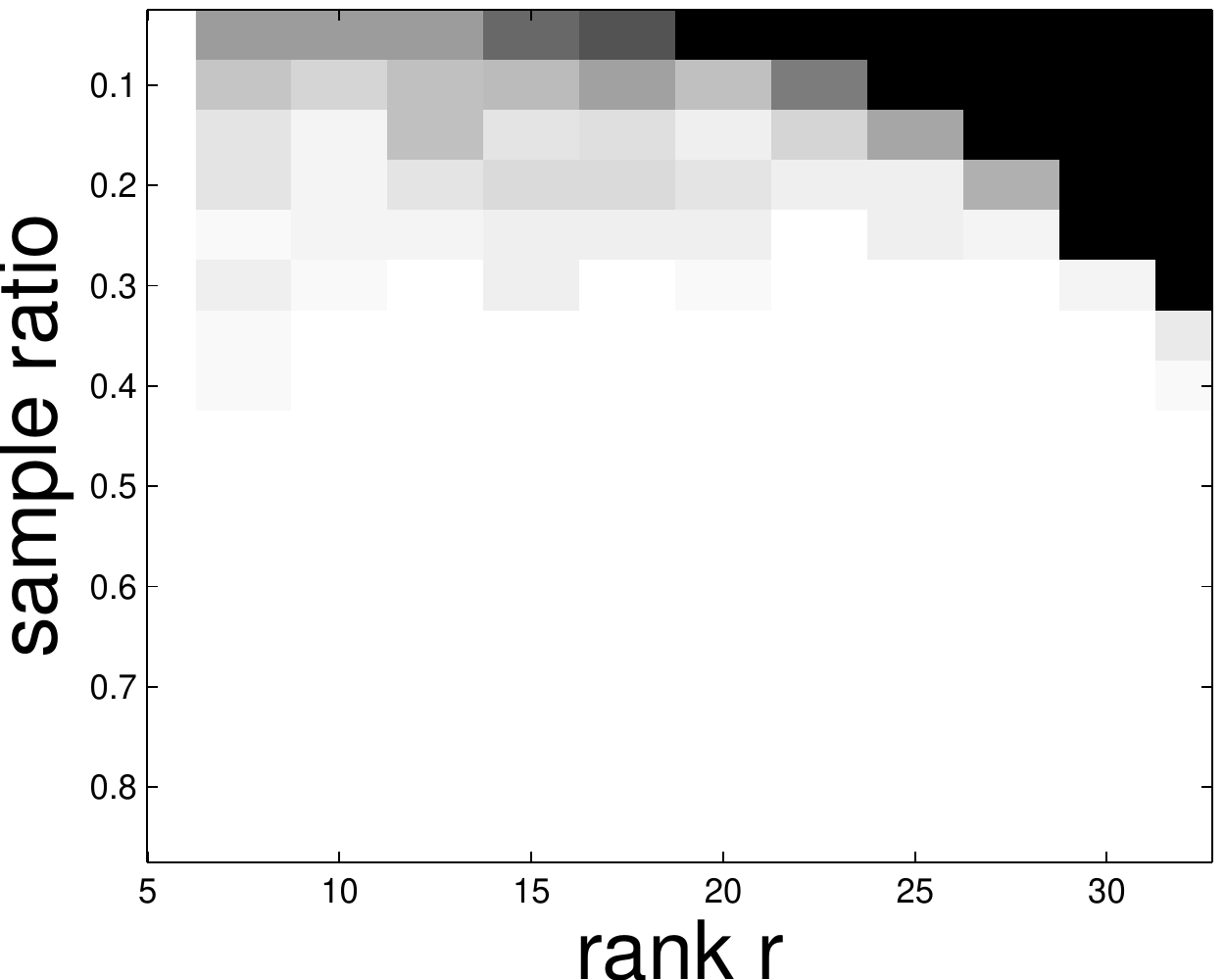} &
\includegraphics[width=0.17\textwidth]{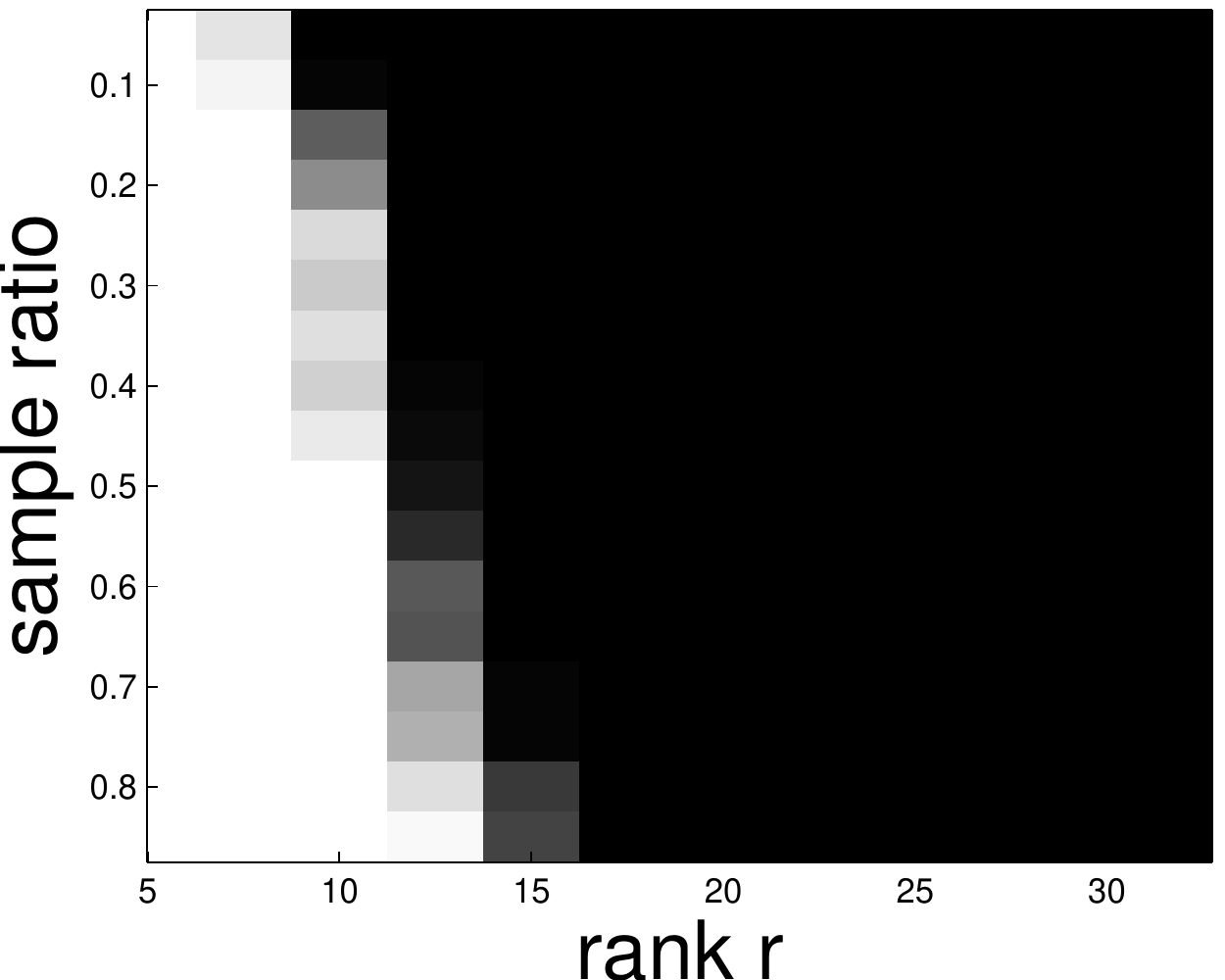}\\
ALSaS-fix & iHOOI-fix & TMac-fix & geomCG-fix & WTucker-true \\
\includegraphics[width=0.17\textwidth]{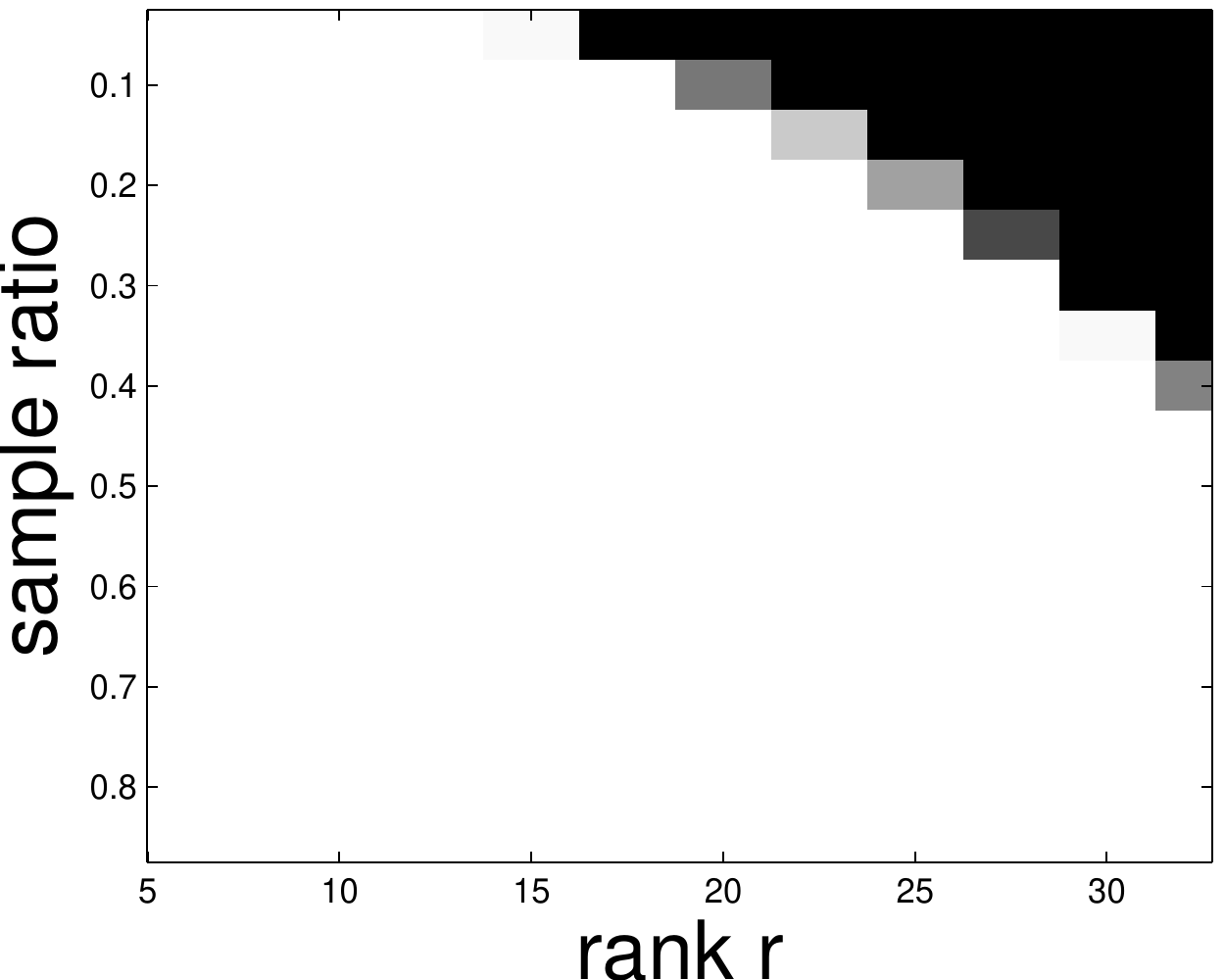} &
\includegraphics[width=0.17\textwidth]{pics/Als_fix_3G.pdf} &
\includegraphics[width=0.17\textwidth]{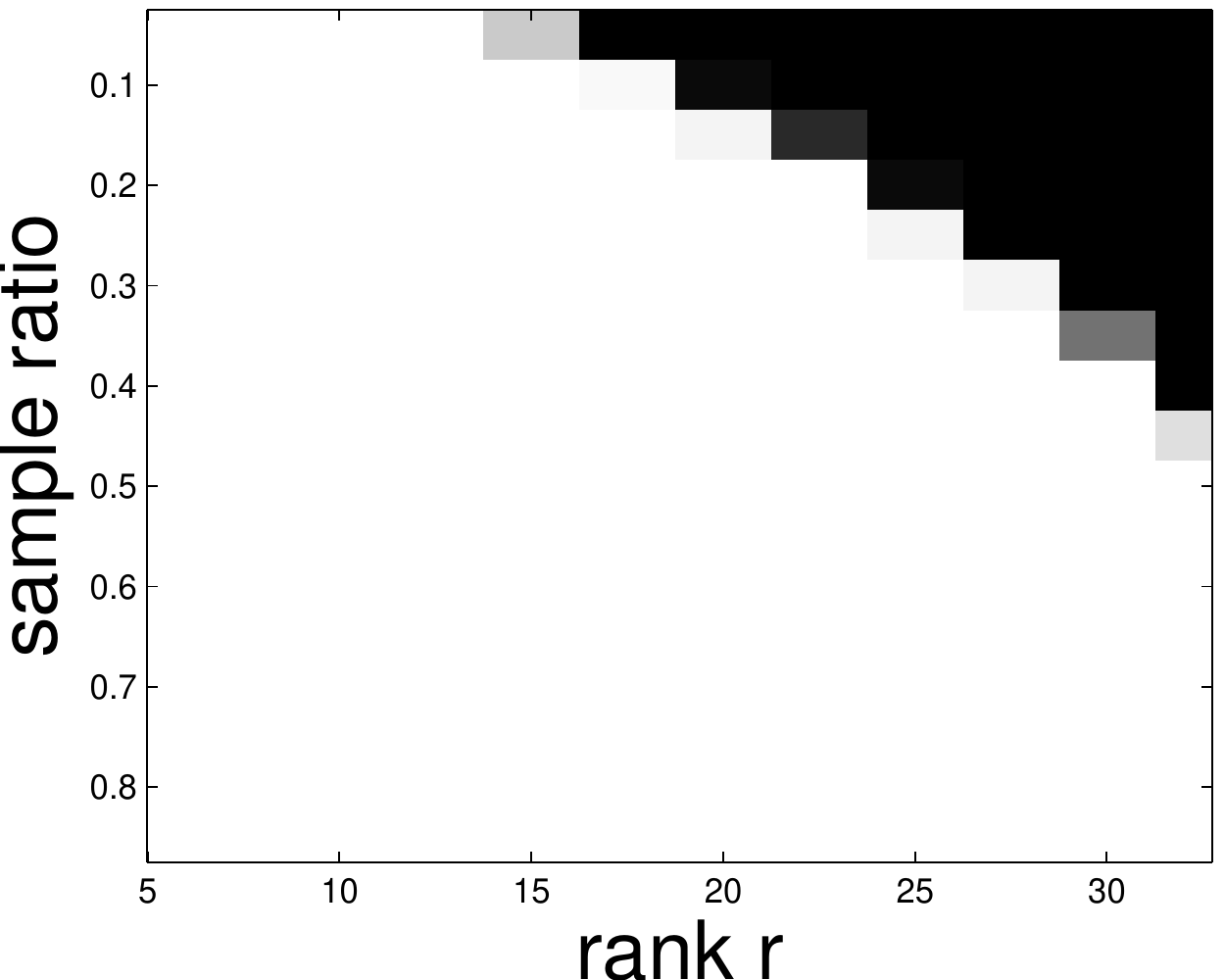} &
\includegraphics[width=0.17\textwidth]{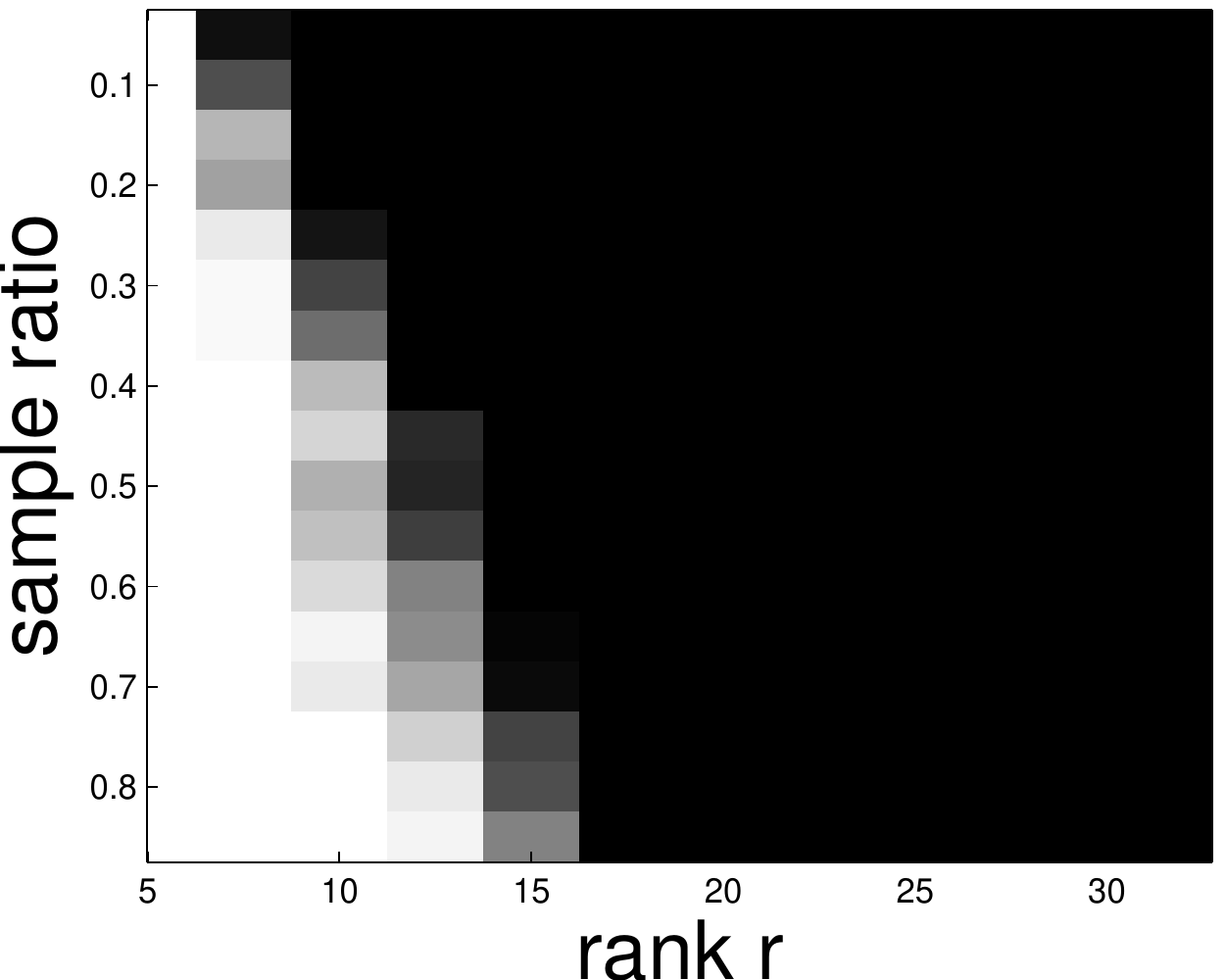} &
\includegraphics[width=0.17\textwidth]{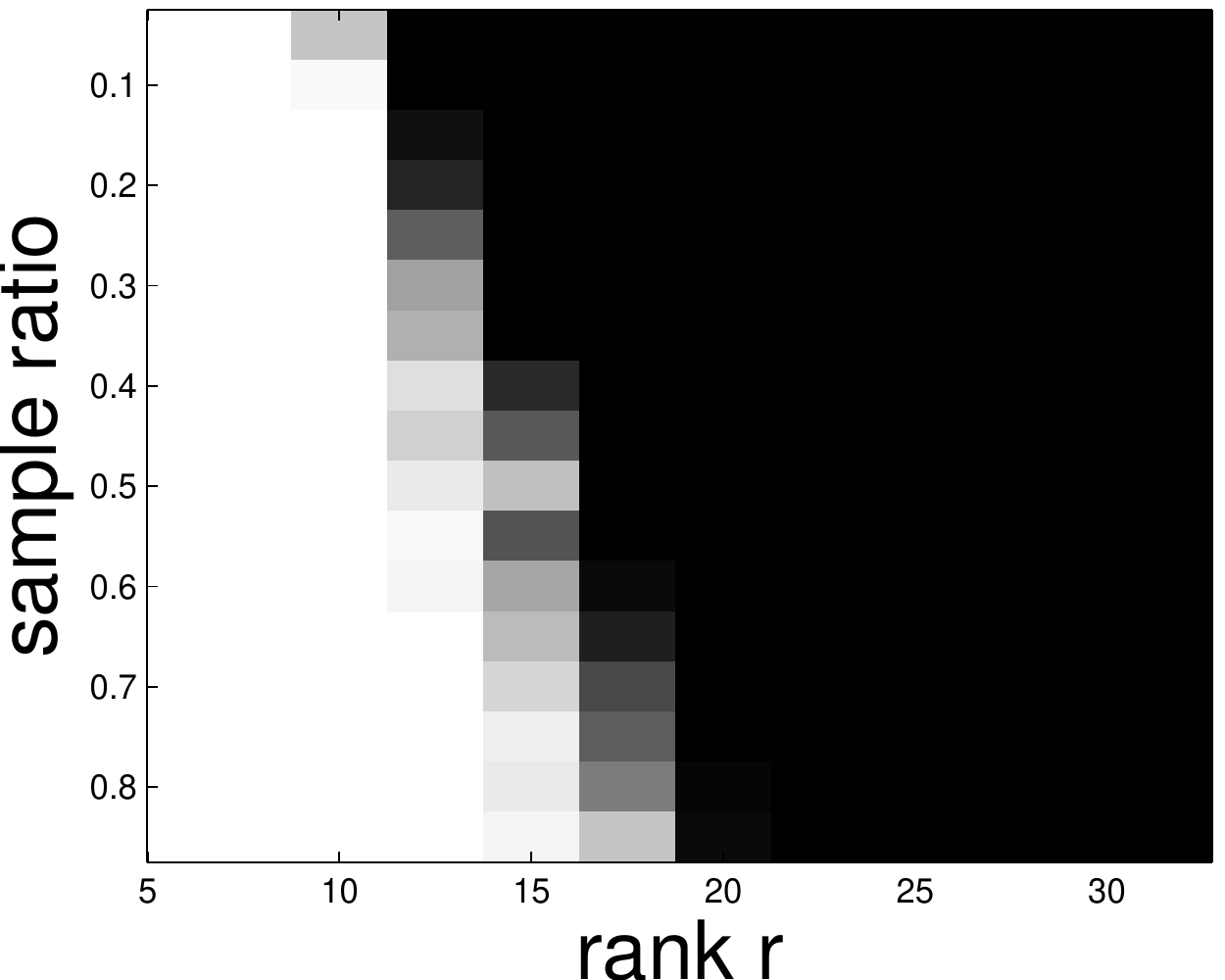}
\end{tabular}}
\end{figure}

\subsubsection*{Application to MRI image reconstruction} We compare the performance of the above five algorithms on reconstructing a 3D brain MRI image, which has been used in \cite{liu2013tensor, tmac2015} for low-rank tensor completion test. 
The MRI dataset has 181 images of resolution $217\times181$. 
We form it into a $181\times217\times181$ tensor, 
and Figure \ref{fig:dist-svd} plots its scaled singular values of each mode matricization. From the figure, we see that the dataset has very good multilinear low-rankness property, and it can be well approximated by a rank-$(50,50,50)$ tensor. Hence, we set $r_n^{\max}=50$ and initialize $r_n=1,n=1,2,3$ for ALSaS, iHOOI, TMac, and geomCG and fix $r_n=50,n=1,2,3$ for WTucker. Figure \ref{fig:mri_rec} depicts three slices of the original and 95\% masked data  and corresponding reconstructed ones by ALSaS and iHOOI, and Table \ref{table:mri_rec} gives the average relative reconstruction errors and running time of 3 independent trials by the five algorithms from 5\% and 10\% entries sampled uniformly at random. From the table, we see that ALSaS, iHOOI, TMac, and geomCG can all give highly accurate reconstructions while WTucker achieves a relatively lower accuracy. In addition, geomCG and WTucker takes much more time than the other three and iHOOI is the fastest one among the compared methods.


\begin{figure}\caption{Scaled singular values of each mode matricization of a $181\times217\times181$ brain MRI image.}\label{fig:dist-svd}
\centering
\includegraphics[width=0.3\textwidth]{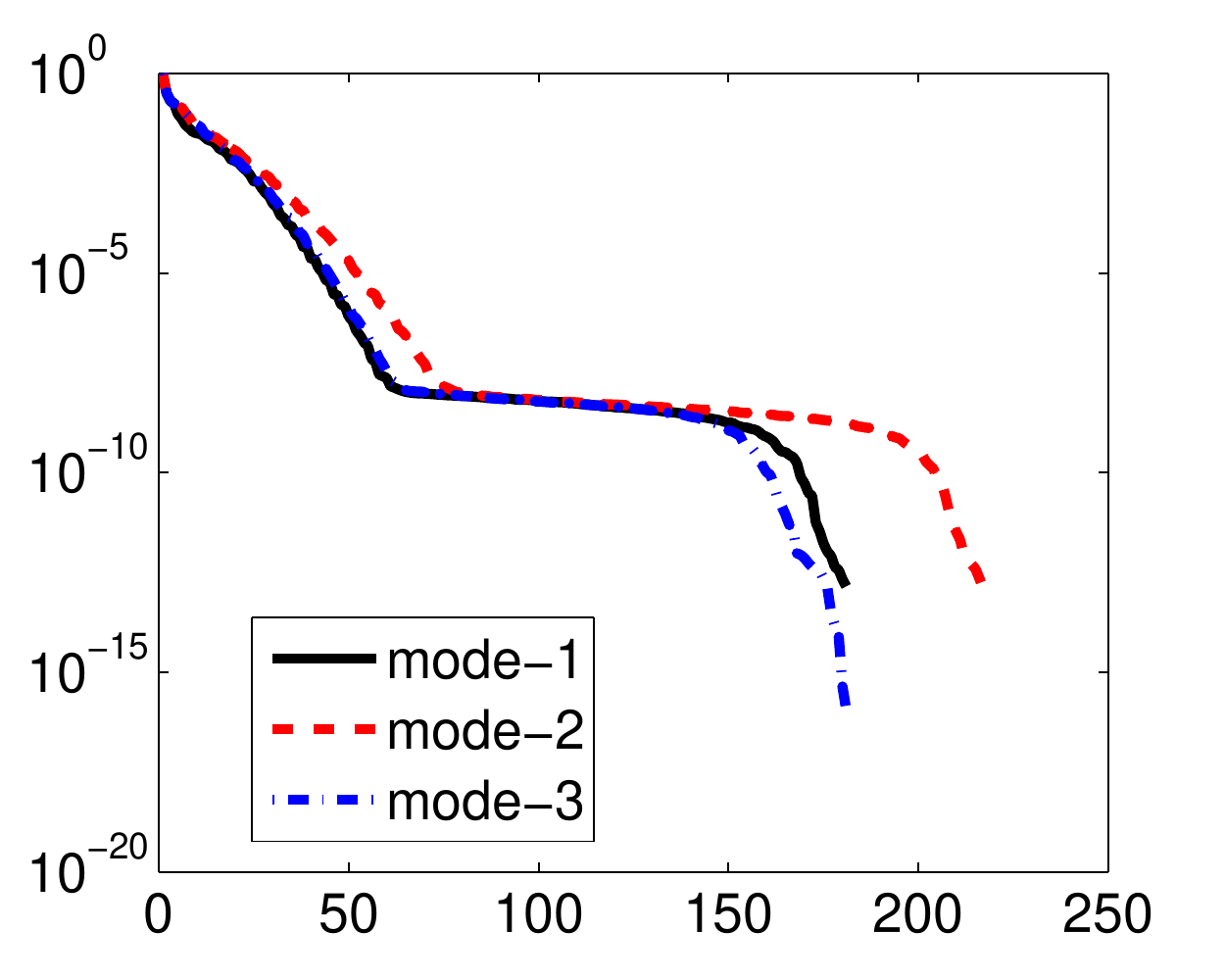}
\end{figure}

\begin{figure}\caption{Brain MRI images: three original slices, the corresponding slices with 95\% pixels missing, and the reconstructed slices by ALSaS and iHOOI.}\label{fig:mri_rec}
\centering
\vspace{0.1cm}
{\small
\begin{tabular}{cccc}
Original & 95\% masked & ALSaS & iHOOI\\
\includegraphics[width=0.20\textwidth]{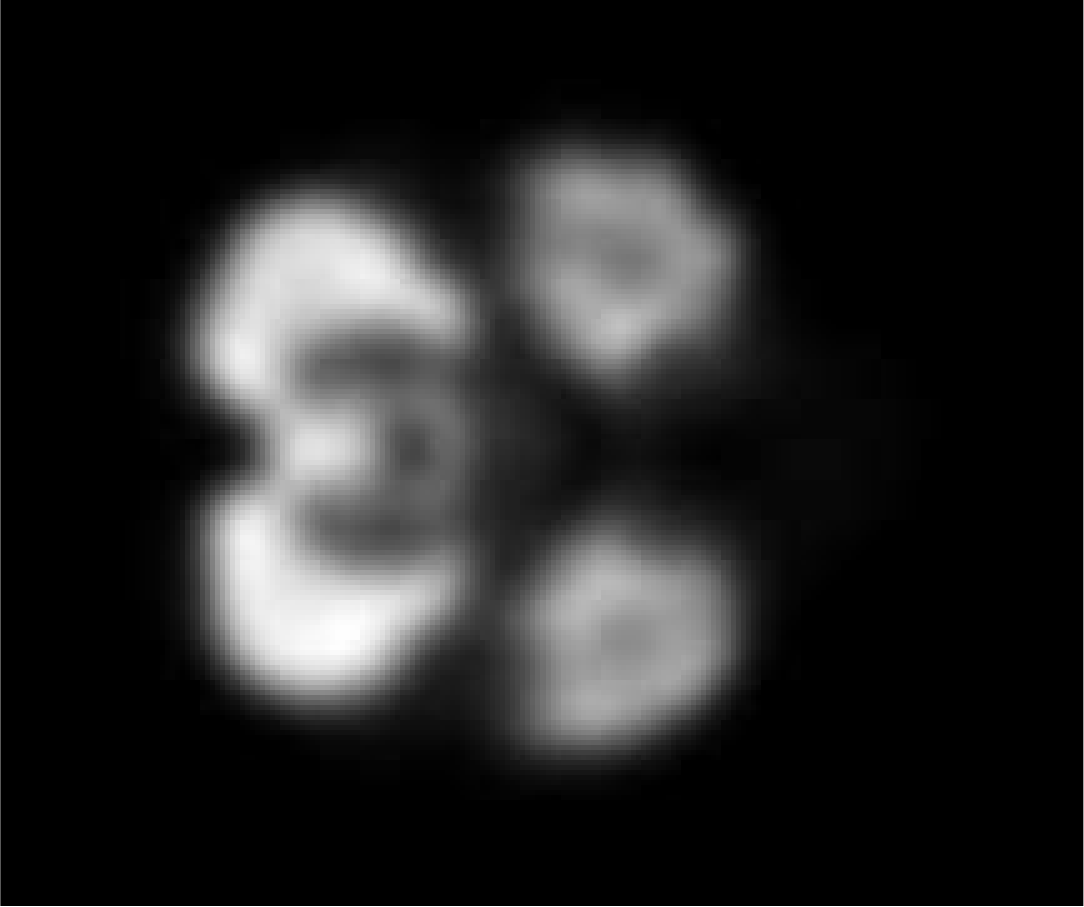} &
\includegraphics[width=0.20\textwidth]{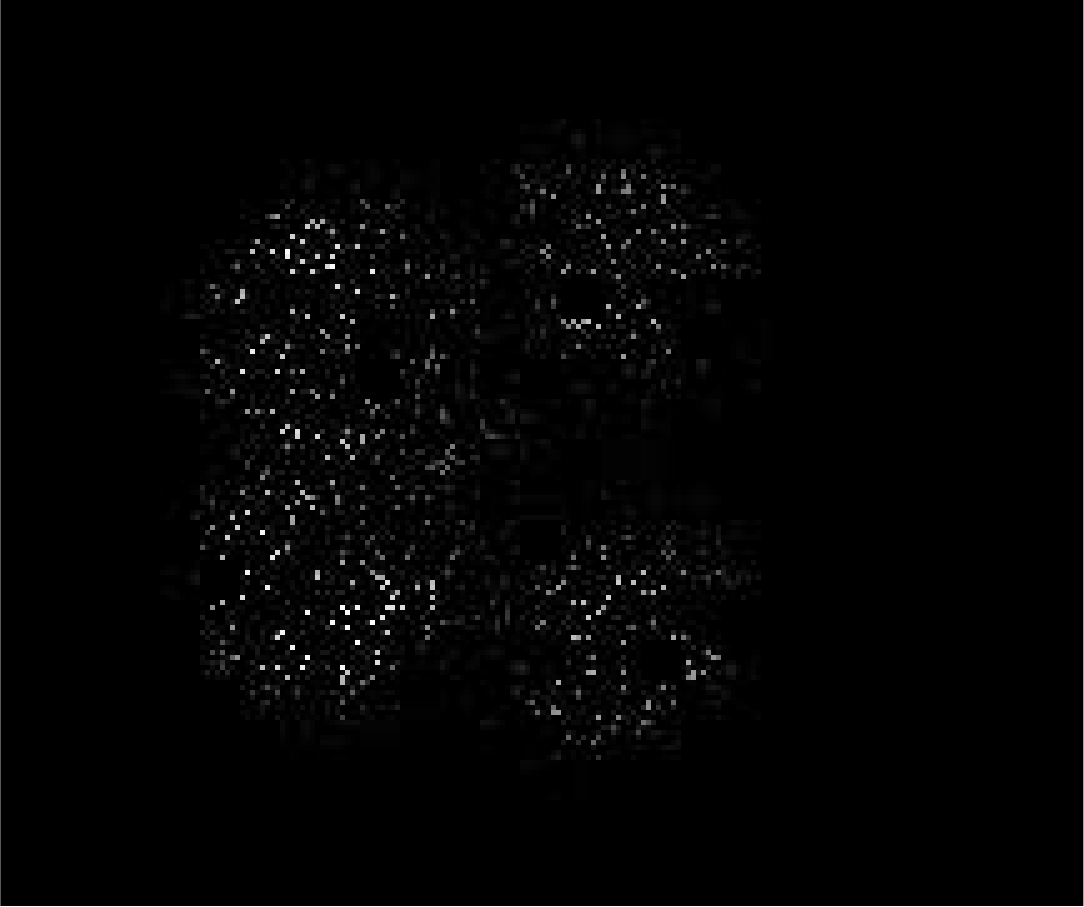} &
\includegraphics[width=0.20\textwidth]{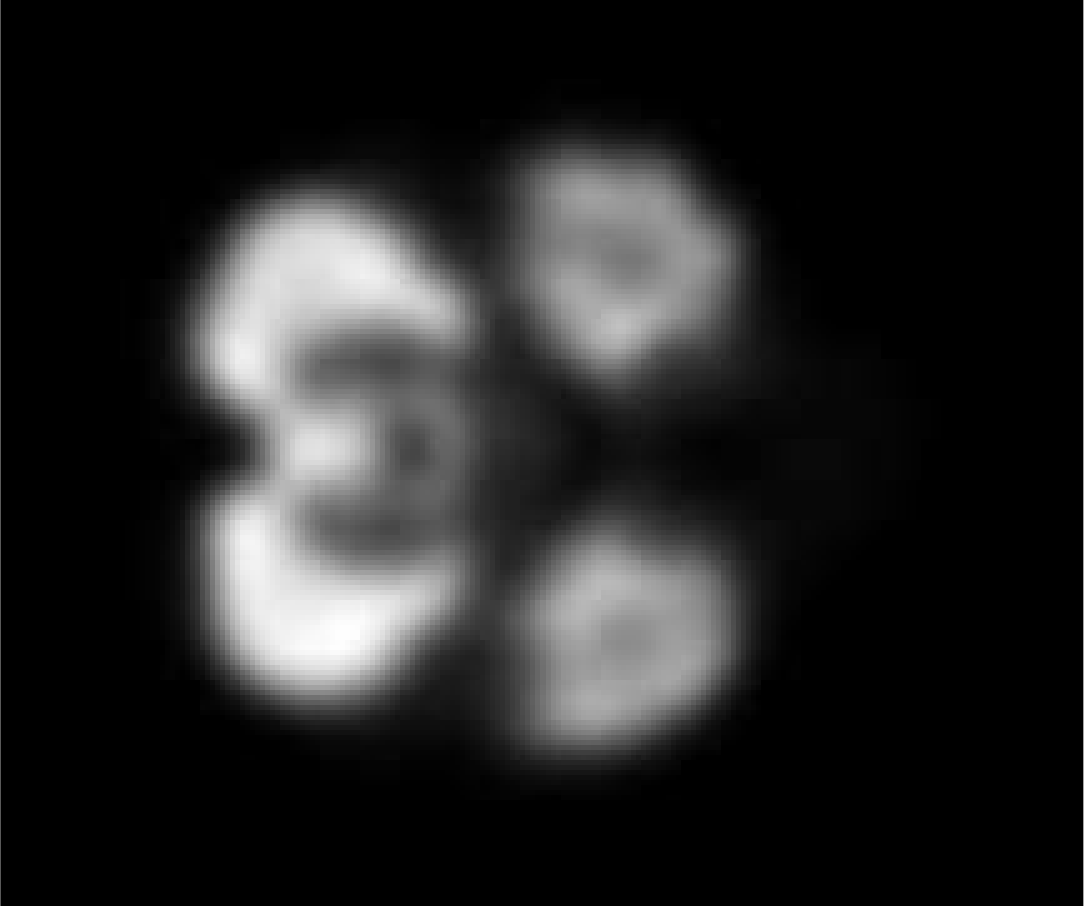} &
\includegraphics[width=0.20\textwidth]{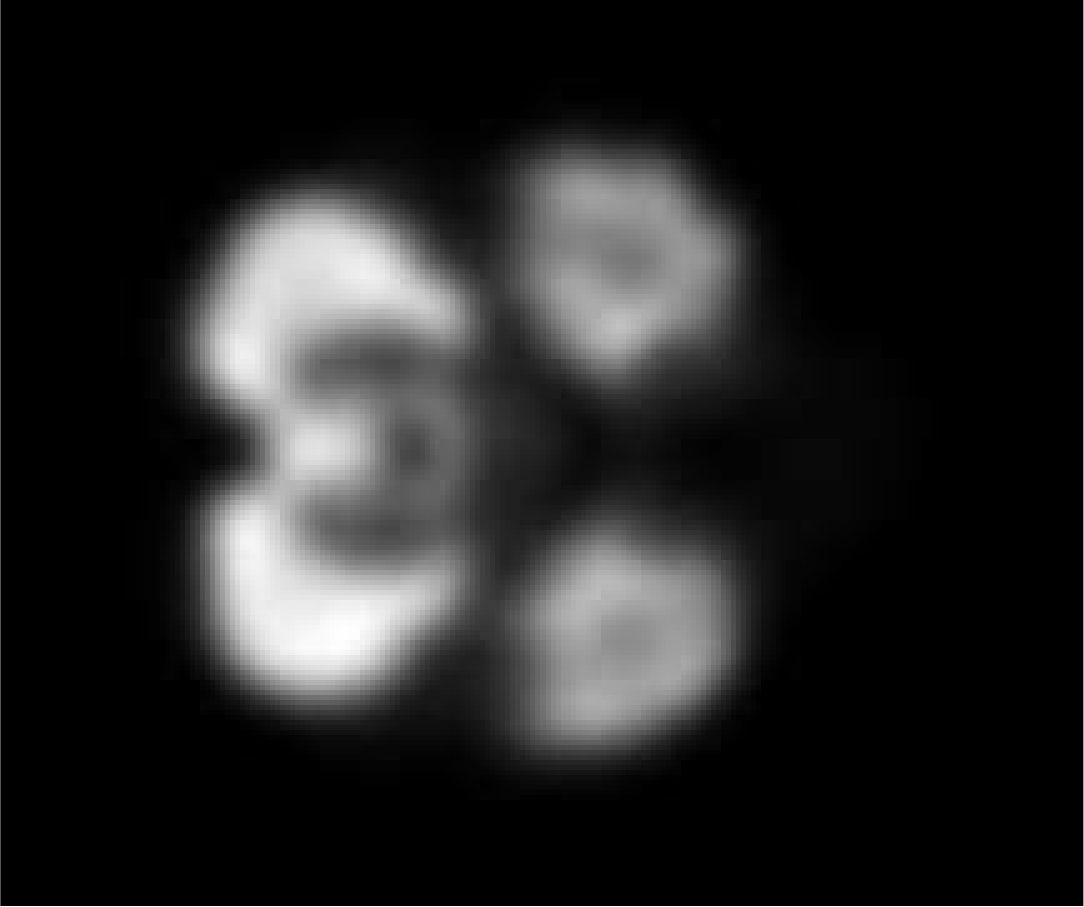} \\
\includegraphics[width=0.20\textwidth]{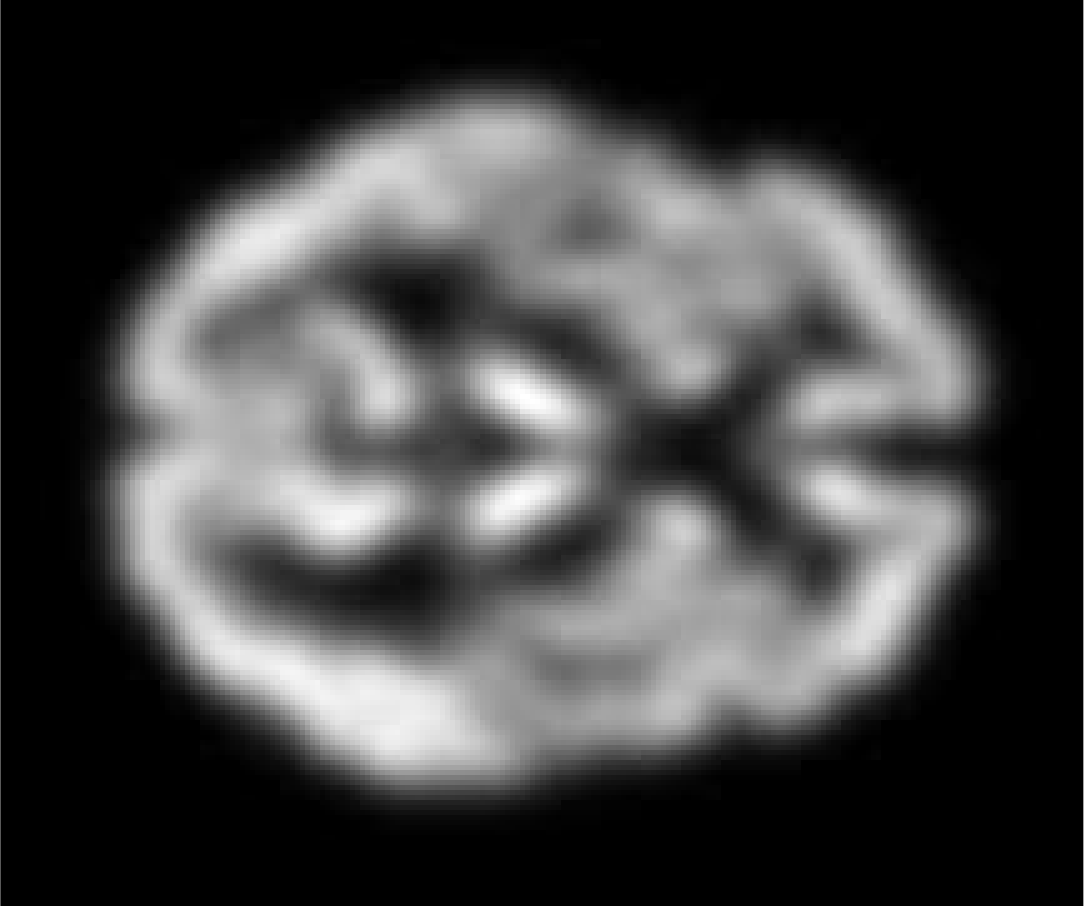} &
\includegraphics[width=0.20\textwidth]{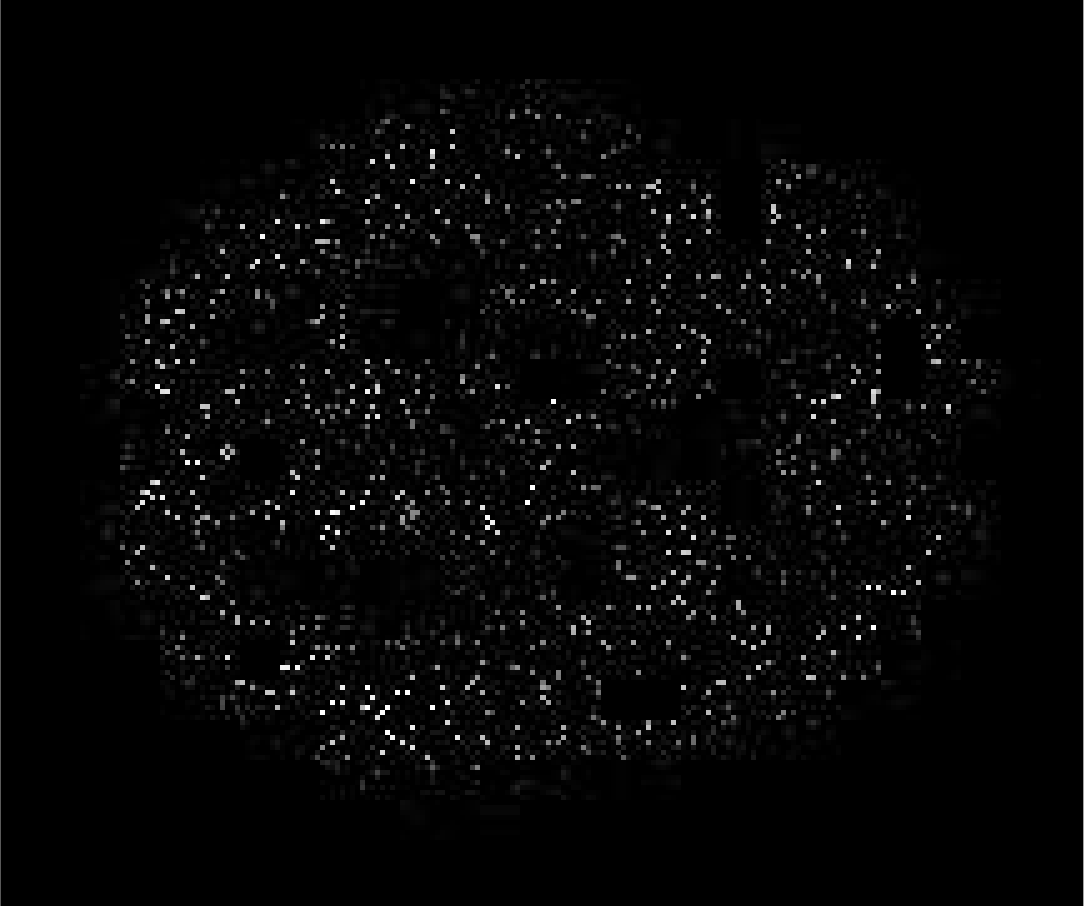} &
\includegraphics[width=0.20\textwidth]{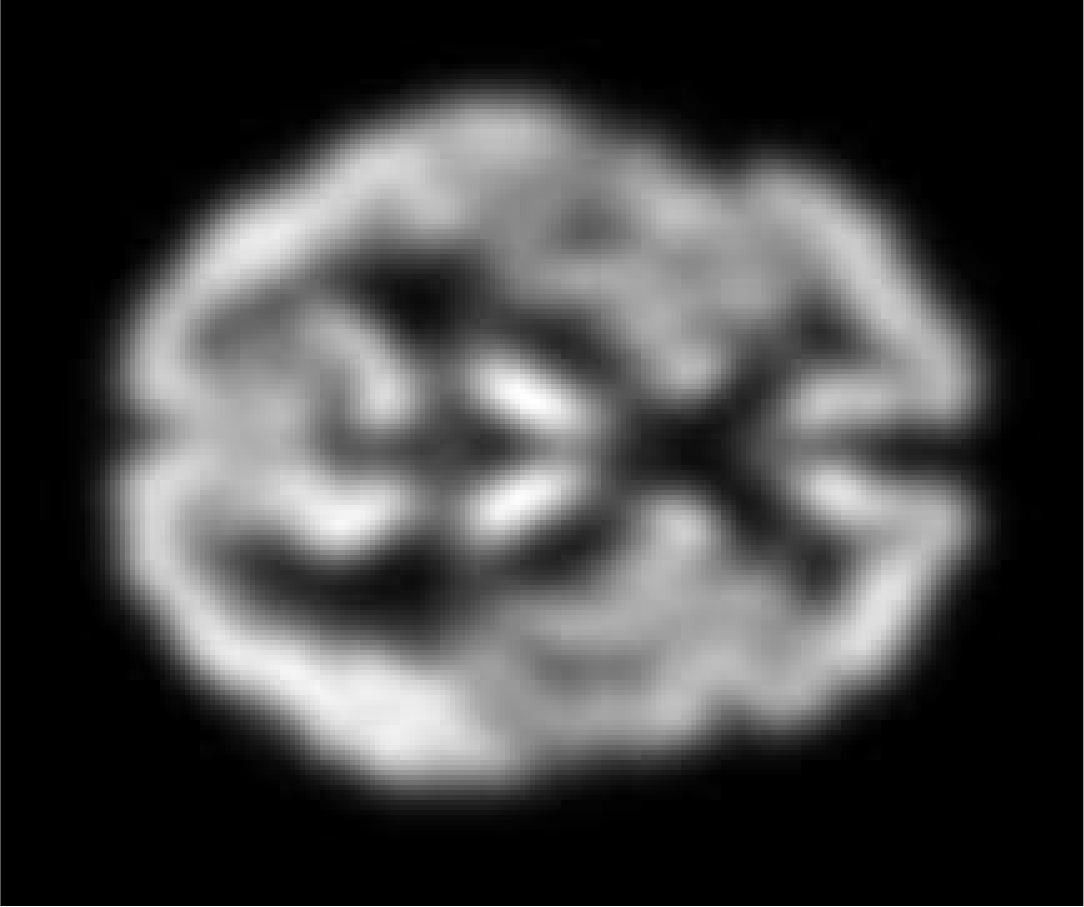} &
\includegraphics[width=0.20\textwidth]{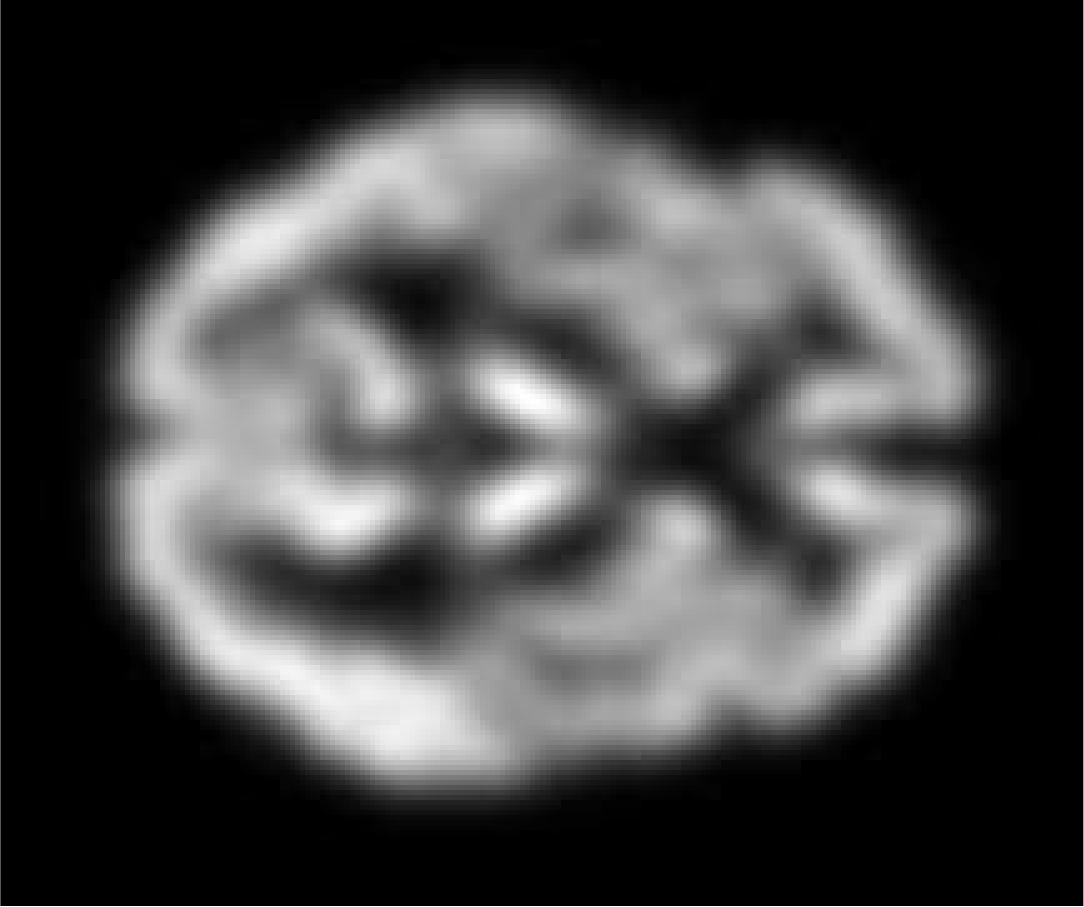} \\
\includegraphics[width=0.20\textwidth]{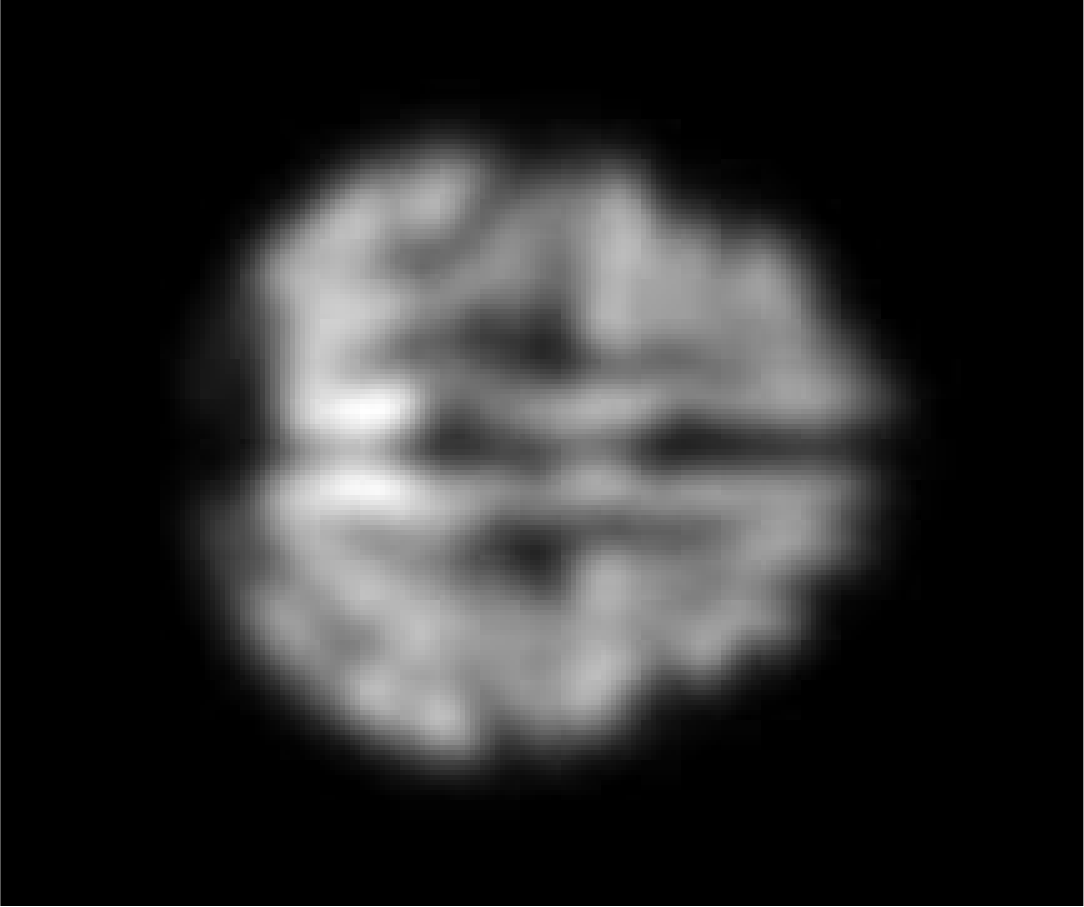} &
\includegraphics[width=0.20\textwidth]{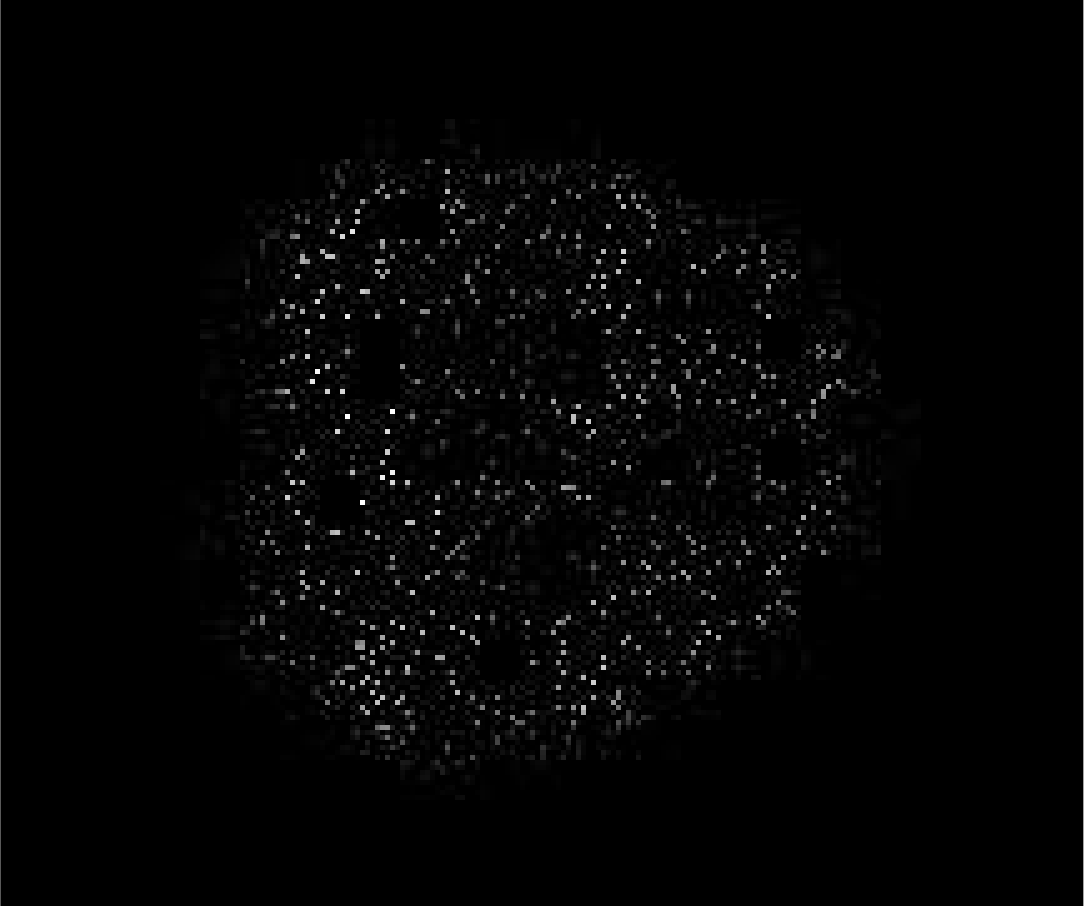} &
\includegraphics[width=0.20\textwidth]{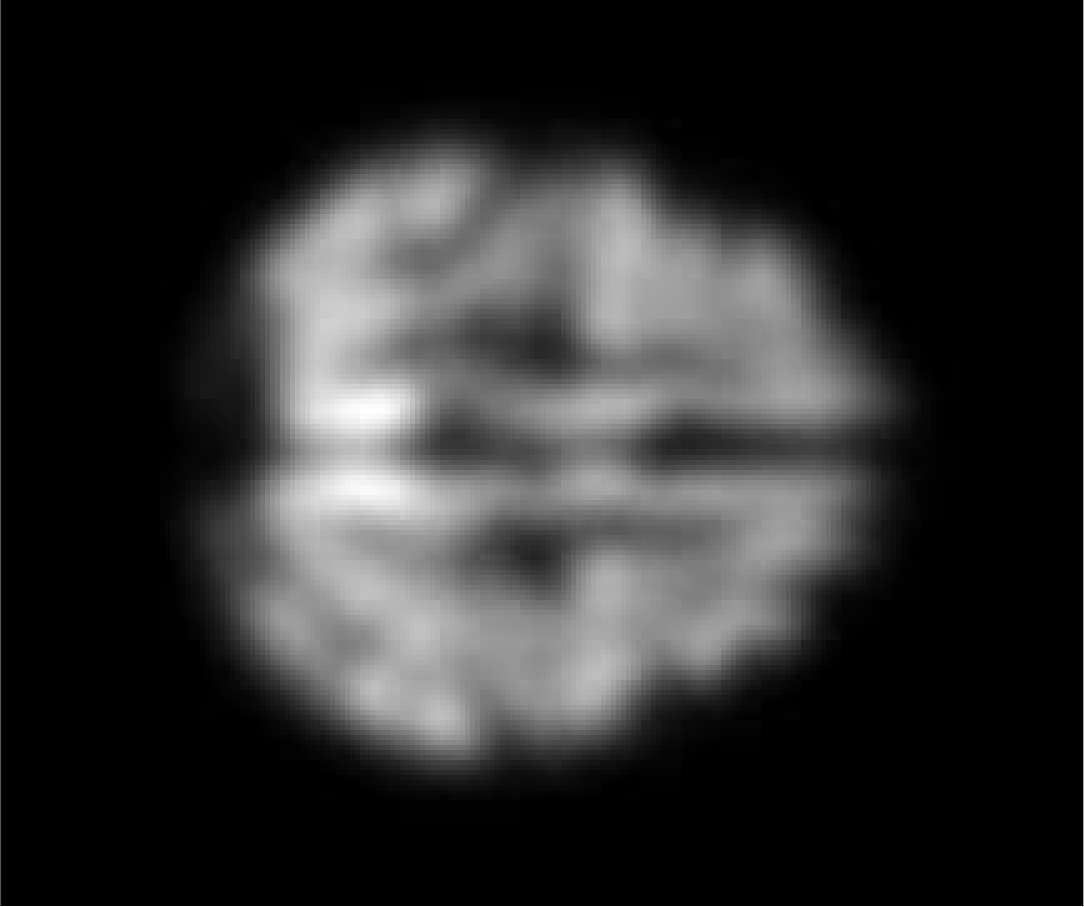} &
\includegraphics[width=0.20\textwidth]{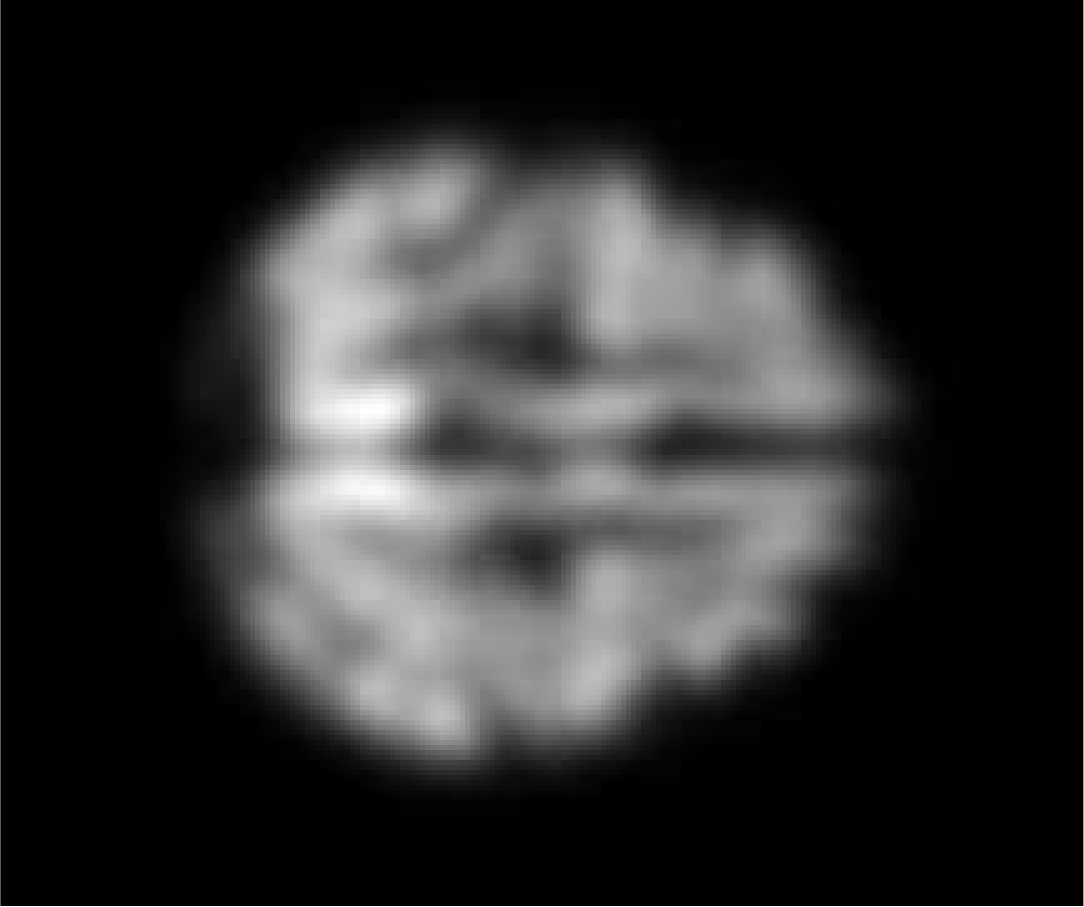} 
\end{tabular}}
\end{figure}

\begin{table}\caption{Average relative reconstruction errors (abbreviated as relerr) and running time (in second) of ALSaS, iHOOI, TMac, geomCG, and WTucker from 5\% and 10\% uniformly randomly sampled entries of a $181\times217\times181$ MRI dataset. The most accurate solutions are highlighted in \textbf{bold}.}\label{table:mri_rec}
\centering
{\small\begin{tabular}{|c|cc|cc|cc|cc|cc|}\hline
 & \multicolumn{2}{|c|}{ALSaS} & \multicolumn{2}{|c|}{iHOOI} & \multicolumn{2}{|c|}{TMac} & \multicolumn{2}{|c|}{geomCG} & \multicolumn{2}{|c|}{WTucker}\\\hline
 SR & relerr & time & relerr & time & relerr & time & relerr & time & relerr & time\\\hline
 5\% & 2.04e-4 & 580 & 1.97e-4 & 377 & 2.94e-4 & 711 & \textbf{1.80e-4} & 5055 & 1.45e-2 & 4580\\
 10\% & 2.85e-5 & 357 & \textbf{2.77e-5} & 310 & 3.36e-5 & 319 & 4.36e-5 & 12243 & 9.78e-3 & 5652\\\hline
\end{tabular}}
\end{table}

\section{Conclusions}\label{sec:conclusion}
We have formulated an incomplete higher-order singular value decomposition (HOSVD) problem and also presented one algorithm, called iHOOI, for solving the problem based on block coordinate update. Under boundedness assumption on the iterates, we have shown the global convergence of iHOOI in terms of a first-order optimality condition if there is a positive gap between the $r_n$-th and $(r_n+1)$-th largest singular values of intermediate points. 
Hence, for the first time, we have given global convergence of the popular higher-order orthogonality iteration (HOOI) method by regarding it as a special case of iHOOI. In addition, we have tested the efficiency and reliability of the proposed method on obtaining dominant factors of underlying tensors and also reconstructing low-multilinear-rank tensors, on both of which we have demonstrated that it can outperform state-of-the-art methods.

\appendix 
\section{Alternating least squares}\label{app:als}
We present another algorithm for finding an approximate higher-order singular value decomposition of an given tensor $\bm{\cM}$ from its partial entries. This algorithm is based on the alternating least squares method for solving \eqref{eq:main2}. 

Note that in \eqref{eq:main2}, $f$ is convex with respect to each block variable among $\bm{\cC}, \vA_1,\ldots,\vA_N$ and $\bm{\cX}$ while keeping the others fixed. In addition, we have $\bm{\cC}\times_{i=1}^N(\vA_i\vR_i)=(\bm{\cC}\times_{i=1}^N\vR_i)\times_{i=1}^N\vA_i$ from \eqref{eq:ttm}. Hence, we propose to solve \eqref{eq:main2} by first cyclically updating $\bm{\cC},\vA_1,\ldots,\vA_N$ and $\bm{\cX}$ without orthogonality constraint and then normalizing $\vA_n$'s at the end of each cycle. Specifically, let $(\hat{\bm{\cC}},\hat{\vA},\hat{\bm{\cX}})$ be the current value of the variables. We renew $({\bm{\cC}},{\vA})$ to $(\tilde{\bm{\cC}},\tilde{\vA})$ by first performing the updates
\begin{subequations}\label{eq:update}
\begin{align}
\bm{\mcc}^{mid}&=\argmin_{\bm{\mcc}}f(\bm{\mcc},\hat{\mbfa},\hat{\bm{\mcx}}),\label{eq:update-c}\\
\mbfa_n^{mid}&=\argmin_{\mbfa_n}f(\bm{\mcc}^{mid},\mbfa_{<n}^{mid},\mbfa_n,\hat{\mbfa}_{>n},\hat{\bm{\mcx}}),\ n = 1,\ldots,N,\label{eq:update-a}
\end{align}
\end{subequations}
Assuming the economy QR decomposition of $\vA_n^{mid}$ to be $\vA_n^{mid}=\vQ_n\vR_n,\,n=1,\ldots,N$, we then let 
$$\tilde{\bm{\cC}}=\bm{\cC}^{mid}\times_{i=1}^N\vR_i,\quad \tilde{\vA}_n=\vQ_n,\,n=1,\ldots,N.$$
We renew ${\bm{\cX}}$ to $\tilde{\bm{\cX}}$ by
\begin{equation}\label{eq:update-x}
\tilde{\bm{\mcx}}=\argmin_{\mcp_\Omega(\bm{\mcx})=\mcp_\Omega(\bm{\mcm})}f(\tilde{\bm{\mcc}},\tilde{\mbfa},\bm{\mcx}),
\end{equation}
which can be explicitly written as 
\begin{equation}\label{eq:sol-x}
\tilde{\bm{\mcx}}=\mcp_{\Omega^c}(\tilde{\bm{\mcc}}\times_1\tilde{\mbfa}_1\ldots\times_N\tilde{\mbfa}_N)+\mcp_\Omega(\bm{\mcm}).
\end{equation}

\subsubsection*{$\bm{\cC}$- and $\vA$-subproblems} According to \eqref{eq:vec}, the problem in \eqref{eq:update-c} can be written as
\begin{equation}\label{eq:prob-c}
\bm{\mcc}^{mid}=\argmin_{\bm{\mcc}}\frac{1}{2}\big\|\big(\otimes_{n=N}^1\hat{\mbfa}_n\big)\vvec(\bm{\mcc})-\vvec(\hat{\bm{\mcx}})\big\|_2^2,
\end{equation}
the solution of which can be written as
$$\vvec(\bm{\mcc}^{mid})=\left(\big(\otimes_{n=N}^1\hat{\mbfa}_n\big)^\top
\big(\otimes_{n=N}^1\hat{\mbfa}_n\big)\right)^\dagger
\left(\big(\otimes_{n=N}^1\hat{\mbfa}_n\big)^\top\vvec(\hat{\bm{\mcx}})\right).$$
Using \eqref{eq:vec} and \eqref{eq:kronpro} and noticing $\hat{\vA}_n^\top\hat{\vA}_n=\vI,\,\forall n$, we have
\begin{equation}\label{eq:sol-c}
\bm{\mcc}^{mid}=\hat{\bm{\mcx}}\times_1\hat{\mbfa}_1^\top\ldots
\times_N\hat{\mbfa}_N^\top.
\end{equation}
Let
$
\hat{\mbfb}_n
=\unfold_n(\mbfc^{mid}\times_{i=1}^{n-1}\mbfa_i^{mid}
\times_{i=n+1}^N\hat{\mbfa}_i).
$
Then according to \eqref{eq:mat}, each problem in \eqref{eq:update-a} can be written as
\begin{equation}\label{eq:prob-a}
\mbfa_n^{mid}=\argmin_{\mbfa_n}\frac{1}{2}\big\|\mbfa_n\hat{\mbfb}_n-\hat{\mbfx}_{(n)}\big\|_F^2,
\end{equation}
whose solution can be explicitly written as
\begin{equation}\label{eq:sol-a}\mbfa_n^{mid}=\hat{\mbfx}_{(n)}(\hat{\mbfb}_n)^\top\big(\hat{\mbfb}_n\hat{\mbfb}_n^\top\big)^\dagger.
\end{equation}

Summarizing the above discussion, we have the pseudocode of the proposed method in Algorithm \ref{alg:lrtfit}.

\begin{algorithm}\caption{\textbf{A}lternating \textbf{L}east \textbf{S}qu\textbf{a}res for HO\textbf{S}VD with missing values (ALSaS)}\label{alg:lrtfit}
\begin{algorithmic}[1]
{\small
\STATE \textbf{Input:} index set $\Omega$, observed entries $\mcp_\Omega(\bm{\mcm})$, and initial point $(\bm{\mcc}^0,\mbfa^0,\bm{\mcx}^0)$ with $(\vA_i^0)^\top \vA_i^0= \vI,\forall i$ and $\mcp_\Omega(\bm{\mcx}^0)=\mcp_\Omega(\bm{\mcm})$.
\FOR{$k=0,1,\ldots$}
\STATE Update $\bm{\cC}$ by
\begin{equation}\label{alg:update-c}
\bm{\cC}^{k+\frac{1}{2}}=\bm{\mcx}^k\times_{i=1}^N(\mbfa_i^k)^\top.
\end{equation}
\FOR{$n = 1,\ldots,N$}
\STATE Update $\vA_n$ by 
$$\vA_n^{k+\frac{1}{2}}=\bm{\cX}_{(n)}^k(\vB_n^k)^\top(\vB_n^k(\vB_n^k)^\top)^\dagger,$$
where
\begin{equation}\label{eq:bnk}
\vB_n^k=\unfold_n\big(\bm{\cC}^{k+\frac{1}{2}}\times_{i=1}^{n-1}\vA_i^{k+\frac{1}{2}}\times_{i=n+1}^N\vA_i^k\big).
\end{equation}
\ENDFOR
\STATE Let the economy QR decomposition of $\vA_n^{k+\frac{1}{2}}$ be $\vQ_n^k\vR_n^k$ and do the normalization
\begin{equation}\label{eq:normal}\bm{\cC}^{k+1}=\bm{\cC}^{k+\frac{1}{2}}\times_{i=1}^N\vR_i^k,\quad \vA_n^{k+1}=\vQ_n^k,\, n=1,\ldots,N.
\end{equation}
\STATE Update $\bm{\cX}$ by 
\begin{equation}\label{alg:update-x}
\bm{\mcx}^{k+1}=\mcp_{\Omega^c}(\bm{\mcc}^{k+1}\times_{i=1}^N\mbfa_i^{k+1})+\mcp_\Omega(\bm{\mcm}).
\end{equation}
\IF{stopping criterion is satisfied}
\STATE Return $(\bm{\mcc}^{k+1},\mbfa^{k+1},\bm{\mcx}^{k+1})$.
\ENDIF
\ENDFOR
}
\end{algorithmic}
\end{algorithm}

Following the convergence analysis of \cite{tmac2015}, it is not difficult to show the global convergence of Algorithm \ref{alg:lrtfit}, and we summarize the result as follows with no proof.
\begin{theorem}[Global convergence of Algorithm \ref{alg:lrtfit}]\label{thm:convg1}
Let $\{(\bm{\cC}^k,\vA^k,\bm{\cX}^k)\}_{k=1}^\infty$ be the sequence generated from Algorithm \ref{alg:lrtfit}. Also, let $\bm{\cY}^k=\cP_\Omega(\bm{\cC}^k\times_{i=1}^N\vA_i^k)-\cP_\Omega(\bm{\cM})$ and $\bm{\Lambda}^k=\vzero$ for all $k$. Then any finite limit point of $\{(\bm{\cC}^k,\vA^k,\bm{\cX}^k,\bm{\Lambda}^k,\bm{\cY}^k)\}_{k=1}^\infty$ satisfies the first-order optimality conditions of \eqref{eq:main2}. Furthermore, if $\{\cP_{\Omega^c}(\bm{\cX}^k)\}_{k=1}^\infty$ is bounded, then 
\begin{equation}\label{eq:glbcvg}
\lim_{k\to\infty}\nabla\cL_f(\bm{\cC}^k,\vA^k,\bm{\cX}^k,\bm{\Lambda}^k,\bm{\cY}^k)=\vzero.
\end{equation}
\end{theorem}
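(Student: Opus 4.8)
The plan is to recast Algorithm~\ref{alg:lrtfit} as cyclic exact block minimization of a continuously differentiable block‑multiconvex function and then follow the convergence machinery of \cite{xu2013block, tmac2015}, as the paragraph preceding the statement suggests, so that the proof reduces to checking its hypotheses. First I would eliminate the affine constraint in \eqref{eq:main2} by writing $\bm{\cX}=\cP_\Omega(\bm{\cM})+\cP_{\Omega^c}(\bm{\cX})$ and treating $\cP_{\Omega^c}(\bm{\cX})$ as a free block, so that $f$ is an unconstrained block‑multiconvex function of $\bm{\cC},\vA_1,\dots,\vA_N,\cP_{\Omega^c}(\bm{\cX})$. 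One cycle of the algorithm then consists of: (i) exactly minimizing $f$ over $\bm{\cC}$, cf.\ \eqref{eq:sol-c}; (ii) for $n=1,\dots,N$ in order, exactly minimizing $f$ over $\vA_n$, cf.\ \eqref{eq:sol-a}; (iii) the QR normalization \eqref{eq:normal}, which by \eqref{eq:ttm} changes neither the value of $f$ nor the product $\bm{\cC}\times_{i=1}^N\vA_i$ and restores $\vA_n^\top\vA_n=\vI$; and (iv) exactly minimizing $f$ over $\cP_{\Omega^c}(\bm{\cX})$, cf.\ \eqref{eq:sol-x}. Thus the method is block‑coordinate minimization together with one gauge‑fixing step, and the remaining work is the descent, boundedness, and subsequential‑limit arguments.

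For the descent estimate I would use that $\vA_n^k$ has orthonormal columns for every $k$ (the input assumption for $k=0$, step (iii) for $k\geq1$); hence by \eqref{eq:vec} and \eqref{eq:kron2}--\eqref{eq:kron3} the Hessian of $\bm{\cC}\mapsto f(\bm{\cC},\vA^k,\bm{\cX}^k)$ equals $\otimes_{n=N}^1\big((\vA_n^k)^\top\vA_n^k\big)=\vI$, while the Hessian of $\cP_{\Omega^c}(\bm{\cX})\mapsto f(\bm{\cC}^{k+1},\vA^{k+1},\bm{\cX})$ is the identity on the $\Omega^c$‑entries; so $f$ is $1$‑strongly convex along the $\bm{\cC}$‑ and $\bm{\cX}$‑updates. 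Since steps (i),(ii),(iv) do not increase $f$ and step (iii) leaves it unchanged, $f^k:=f(\bm{\cC}^k,\vA^k,\bm{\cX}^k)$ is nonincreasing and, being $\geq 0$, converges; the strong‑convexity optimality inequality for (i) and (iv) then gives $f^k-f^{k+1}\geq\frac12\|\bm{\cC}^{k+\frac12}-\bm{\cC}^k\|_F^2+\frac12\|\bm{\cX}^{k+1}-\bm{\cX}^k\|_F^2$, so summing over $k$ yields $\bm{\cC}^{k+\frac12}-\bm{\cC}^k\to\vzero$ and $\bm{\cX}^{k+1}-\bm{\cX}^k\to\vzero$; expanding the quadratic in the $\vA_n$‑subproblem and using that its gradient vanishes at $\vA_n^{k+\frac12}$ likewise gives $(\vA_n^{k+\frac12}-\vA_n^k)\vB_n^k\to\vzero$ for each $n$. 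Boundedness of the $\vA$‑blocks is automatic ($\|\vA_n^k\|_F=\sqrt{r_n}$ for $k\geq1$); and if $\{\cP_{\Omega^c}(\bm{\cX}^k)\}$ is bounded, then $f^k\leq f^0$ forces $\{\cP_\Omega(\bm{\cC}^k\times_{i=1}^N\vA_i^k)\}$ bounded, hence $\{\bm{\cC}^k\times_{i=1}^N\vA_i^k\}$ and therefore, since $\times_{i=1}^N\vA_i^k$ is an $F$‑isometry for $k\geq1$, $\{\bm{\cC}^k\}$ and $\{\bm{\cX}^k\}$ are bounded, i.e.\ the whole sequence is bounded.

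To pass to a finite limit point $(\bar{\bm{\cC}},\bar{\vA},\bar{\bm{\cX}})$ along a subsequence $\cK$, I would take $\bar{\bm{\cY}}=\lim_{\cK\ni k\to\infty}\bm{\cY}^k=\cP_\Omega(\bar{\bm{\cC}}\times_{i=1}^N\bar{\vA}_i)-\cP_\Omega(\bm{\cM})$ and $\bar{\bm{\Lambda}}=\vzero$. Feasibility holds in the limit because $(\vA_n^k)^\top\vA_n^k=\vI$ and $\cP_\Omega(\bm{\cX}^k)=\cP_\Omega(\bm{\cM})$ for all $k\geq1$; the identity $\cP_{\Omega^c}(\bm{\cX}^k)=\cP_{\Omega^c}(\bm{\cC}^k\times_{i=1}^N\vA_i^k)$ from \eqref{alg:update-x} then gives, in the limit, $\nabla_{\bm{\cX}}f(\bar{\bm{\cC}},\bar{\vA},\bar{\bm{\cX}})+\cP_\Omega(\bar{\bm{\cY}})=\vzero$; and $\bm{\cC}^{k+\frac12}-\bm{\cC}^k\to\vzero$ together with $\nabla_{\bm{\cC}}f(\bm{\cC}^{k+\frac12},\vA^k,\bm{\cX}^k)=\vzero$ (from \eqref{eq:sol-c}) and continuity of $\nabla f$ gives $\nabla_{\bm{\cC}}f(\bar{\bm{\cC}},\bar{\vA},\bar{\bm{\cX}})=\vzero$. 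It then remains to obtain $\nabla_{\vA_n}f(\bar{\bm{\cC}},\bar{\vA},\bar{\bm{\cX}})=\vzero$ by taking limits in the $\vA_n$‑subproblem optimality; since $\bar{\bm{\Lambda}}=\vzero$, this is exactly the $\vA_n$‑part of the first‑order optimality conditions of \eqref{eq:main2} (no orthogonality multiplier is needed, since the $\vA_n$‑updates minimize $f$ over all of $\RR^{m_n\times r_n}$). This shows $(\bar{\bm{\cC}},\bar{\vA},\bar{\bm{\cX}},\bar{\bm{\Lambda}},\bar{\bm{\cY}})$ is a KKT point. Under the extra boundedness hypothesis the whole sequence is bounded, so every ``$\to\vzero$ along $\cK$'' becomes ``$\to\vzero$'', and since $\bm{\Lambda}^k=\vzero$, $\cP_\Omega(\bm{\cX}^k)-\cP_\Omega(\bm{\cM})=\vzero$ and $(\vA_n^k)^\top\vA_n^k-\vI=\vzero$ for $k\geq1$, all five blocks of $\nabla\cL_f(\bm{\cC}^k,\vA^k,\bm{\cX}^k,\bm{\Lambda}^k,\bm{\cY}^k)$ tend to $\vzero$, which is \eqref{eq:glbcvg}.

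The genuinely delicate point — and the reason the cited BCD analysis is invoked rather than a one‑line argument — is the $\vA_n$‑block: the least‑squares subproblem $\min_{\vA_n}\frac12\|\vA_n\vB_n^k-\bm{\cX}_{(n)}^k\|_F^2$ is strongly convex only when $\vB_n^k$ has full row rank, which need not hold, so the descent inequality yields only $(\vA_n^{k+\frac12}-\vA_n^k)\vB_n^k\to\vzero$ rather than $\vA_n^{k+\frac12}-\vA_n^k\to\vzero$, and moreover the QR rescaling couples the magnitude of $\vA_n^{k+\frac12}$ into $\bm{\cC}^{k+1}$. I would resolve this following \cite{tmac2015} in one of two ways: either exploit that \eqref{eq:sol-a} selects the minimum‑Frobenius‑norm solution, so that on the bounded set above the $\vA_n$‑iterates stay in a compact set and the partial products $\bm{\cC}^{k+\frac12}\times_{i=1}^{n-1}\vA_i^{k+\frac12}\times_{i=n+1}^N\vA_i^k$ converge, which suffices to extract the $\vA_n$‑gradient limit after splitting $\bm{\cX}_{(n)}^k$ into its components in and orthogonal to the row space of $\vB_n^k$ (in the spirit of the proof of Theorem~\ref{thm:convg2}); or, more cleanly, replace the bare $\vA_n$‑update by a proximally regularized one, which leaves the set of limit points unchanged but restores $\sum_k\|\vA_n^{k+1}-\vA_n^k\|_F^2<\infty$ directly, after which the limit argument above applies verbatim.
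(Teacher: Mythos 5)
The paper itself offers no proof of this theorem---it is explicitly ``summarize[d] \ldots with no proof,'' deferring to the analysis of \cite{tmac2015}---so your sketch is the intended route, and most of its skeleton is sound: the recasting as exact cyclic block minimization plus a gauge-fixing QR step that preserves both $f$ and the product $\bm{\cC}\times_{i=1}^N\vA_i$, the $1$-strong convexity of the $\bm{\cC}$- and $\cP_{\Omega^c}(\bm{\cX})$-subproblems (valid because the $\vA_n^k$ are kept orthonormal), the resulting conclusions $\bm{\cC}^{k+\frac12}-\bm{\cC}^k\to\vzero$ and $\bm{\cX}^{k+1}-\bm{\cX}^k\to\vzero$, the boundedness argument under the extra hypothesis, and the limit passage for the $\bm{\cC}$- and $\bm{\cX}$-stationarity conditions are all correct, as is the observation that $\bm{\Lambda}^k=\vzero$ is the right multiplier because the $\vA_n$-updates are unconstrained.

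The gap is exactly where you locate it, but neither of your two proposed repairs closes it. First, the claim that the minimum-Frobenius-norm choice in \eqref{eq:sol-a} keeps $\vA_n^{k+\frac12}$ in a compact set is not justified: $\|\vA_n^{k+\frac12}\|_F\le\|\bm{\cX}_{(n)}^k\|_F\,\|(\vB_n^k)^\dagger\|_2$, and $\|(\vB_n^k)^\dagger\|_2$ is the reciprocal of the smallest \emph{nonzero} singular value of $\vB_n^k$, which can blow up along the iteration even when everything else stays bounded. The workable version of this idea is to never manipulate $\vA_n^{k+\frac12}$ itself but only the product $\vA_n^{k+\frac12}\vB_n^k=\bm{\cX}_{(n)}^k(\vB_n^k)^\dagger\vB_n^k$, i.e.\ the projection of $\bm{\cX}_{(n)}^k$ onto the row space of $\vB_n^k$, which is bounded by $\|\bm{\cX}_{(n)}^k\|_F$; the normal equation $(\vA_n^{k+\frac12}\vB_n^k-\bm{\cX}_{(n)}^k)(\vB_n^k)^\top=\vzero$ involves only this bounded quantity, and your descent estimate supplies $(\vA_n^{k+\frac12}-\vA_n^k)\vB_n^k\to\vzero$. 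Even then, passing to the limit requires $\vB_n^k\to\bar{\vB}_n$ along the convergent subsequence, and $\vB_n^k$ in \eqref{eq:bnk} is assembled from the \emph{un-normalized}, hence possibly unbounded, factors $\vA_i^{k+\frac12}$ for $i<n$; one must factor $\vA_i^{k+\frac12}=\vQ_i^k\vR_i^k$ and absorb the $\vR_i^k$'s into the core before any telescoping argument applies, and this step is entirely absent from your sketch. Second, the proximally regularized alternative does restore $\sum_k\|\vA_n^{k+1}-\vA_n^k\|_F^2<\infty$, but it analyzes a modified algorithm and therefore proves a statement about a different method than Algorithm \ref{alg:lrtfit}; it cannot be offered as a proof of the theorem as stated.
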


\section{Proof of \eqref{eq:grad-partx}}\label{app:pf-partx}
Let $\vD_1, \vD_2$ be diagonal matrices such that $\mathrm{vec}(\cP_{\Omega}(\bm{\cX}))=\vD_1\mathrm{vec}(\bm{\cX})$ and $\mathrm{vec}(\cP_{\Omega^c}(\bm{\cX}))=\vD_2\mathrm{vec}(\bm{\cX})$ respectively. Then from \eqref{eq:vec}, it follows that
$$h(\bm{\cX};\tilde{\vA})=\frac{1}{2}\big\|\vD_2\mathrm{vec}(\bm{\cX})\big\|^2-\frac{1}{2}\big\|(\otimes_{i=N}^1\tilde{\vA}_i^\top)(\vD_2\mathrm{vec}(\bm{\cX})+\vD_1\mathrm{vec}(\bm{\cM}))\big\|^2.$$
Hence,
\begin{align*}\vvec\big(\nabla_{\bm{\cX}}h(\hat{\bm{\cX}};\tilde{\vA})\big)=&\,\vD_2^\top\vD_2\vvec(\hat{\bm{\cX}})-\vD_2^\top
(\otimes_{i=N}^1\tilde{\vA}_i^\top)^\top(\otimes_{i=N}^1\tilde{\vA}_i^\top)(\vD_2\mathrm{vec}(\hat{\bm{\cX}})+\vD_1\mathrm{vec}(\bm{\cM}))\\
=&\,\vD_2\vvec(\hat{\bm{\cX}})-\vD_2(\otimes_{i=N}^1\tilde{\vA}_i\tilde{\vA}_i^\top)(\vD_2\mathrm{vec}(\hat{\bm{\cX}})+\vD_1\mathrm{vec}(\bm{\cM}))\\
=&\,\vD_2\vvec(\hat{\bm{\cX}})-\vD_2(\otimes_{i=N}^1\tilde{\vA}_i\tilde{\vA}_i^\top)\mathrm{vec}(\hat{\bm{\cX}})\qquad(\text{because }\cP_\Omega(\hat{\bm{\cX}})=\cP_\Omega(\bm{\cM})).
\end{align*}
Using \eqref{eq:vec}, we have
$$\nabla_{\bm{\cX}}h(\hat{\bm{\cX}};\tilde{\vA})=\cP_{\Omega^c}(\hat{\bm{\cX}})-\cP_{\Omega^c}(\hat{\bm{\cX}}\times_{i=1}^N\tilde{\vA}_i\tilde{\vA}_i^\top),$$
which completes the proof.

\end{document}